\newtheorem{theorem}{Theorem}[section]
\newtheorem{corollary}[theorem]{Corollary}
\newtheorem{lemma}[theorem]{Lemma}
\newtheorem{proposition}[theorem]{Proposition}
\theoremstyle{definition}
\newtheorem{remark}[theorem]{Remark}
\newtheorem{example}{Example}
\newcommand{\IN}{\mathrm{in}}
\title[Two limiting essential resources in a self-cycling fermentor]%
{Growth on Two Limiting Essential Resources in a Self-Cycling Fermentor}
\author[T-H. Hsu T. Meadows L. Wang and G.S.K. Wolkowicz]{}
\subjclass[2010]{Primary: 34A37, 34C60, 34D23; Secondary: 92D25.}
\keywords{
Self-cycling fermentation,
impulsive differential equations,  hybrid system, complementary
resources,
state dependent impulses, nutrient driven process,
emptying/refilling fraction,
global attractivity, water purification, wastewater treatment,
optimal yield}
\email{hsut1@math.mcmaster.ca}
\email{tm09ts@gmail.com}
\email{lwang2@unb.ca}
\email{wolkowic@mcmaster.ca}
\thanks{GSKW and LW are supported by  Canadian  Natural Science and Engineering  (NSERC) Discovery Grants.  GSKW also
is supported by an  Accelerator Supplement used to support TH and TM}
\thanks{$^{\ddag}$ Corresponding author: Gail Wolkowicz}
\begin{document}
\maketitle

\centerline{\scshape  Ting-Hao Hsu$^*$, Tyler Meadows$^*$, Lin Wang$^{\dag}$, and Gail S. K. Wolkowicz$^{*\ddag}$}
\medskip
{\footnotesize
\centerline{Department of Mathematics and Statistics$^*$}
\centerline{McMaster University}
\centerline{1280 Main Street West}
\centerline{Hamilton, Ontario}
\centerline{Canada, L8S 4K1}
\medskip
\centerline{Department of Mathematics and Statistics$^{\dag}$}
\centerline{University of New Brunswick}
\centerline{Tilley Hall, 9 MacAulay Lane}
\centerline{PO Box 4400}
\centerline{Fredericton, New Brunswick}
\centerline{Canada, E3B 5A3}
}

\begin{abstract}
A system of impulsive differential equations with state-dependent
impulses is used to model the growth of a single population on
two limiting essential resources in a self-cycling fermentor.
Potential applications include water purification and biological waste remediation.
The self-cycling fermentation process is a semi-batch process
and the model is an example of a hybrid system.  In this case, a
well-stirred tank is partially drained, and subsequently
refilled using fresh medium when the concentration of both
resources (assumed to be pollutants) falls below some acceptable threshold.
We consider the process successful if the threshold for emptying/refilling the reactor
can be reached indefinitely without the
time between  successive emptying/refillings becoming
unbounded and without  interference by the operator. We prove
that whenever the process is successful, the model predicts that
the concentrations of the population and the resources converge
to a positive periodic solution.  We derive conditions for the
successful operation of the process that are shown to be initial
condition dependent and prove that if these conditions are not
satisfied, then the reactor fails. We show numerically that
there is an optimal fraction of the medium drained from the tank
at each impulse that maximizes the output of the process.
\end{abstract}

\section{Introduction}
\label{sec_introduction}
The self-cycling fermentation (SCF) process can be described in two stages: In the first stage, a well-stirred tank is filled with resources and inoculated with microorganisms that consume the resources. When a threshold concentration of one or more indicator quantities is reached, the second stage is initiated. 
The first stage is a batch culture \cite{Cinar2003}.
During the second stage, the tank is partially drained, and subsequently refilled with fresh resources before repeating the first stage. 

SCF is most often applied to wastewater treatment processes, where the
goal is to reduce the concentration of one or more harmful compounds
\cite{Hughes1996,Sarkis1994}. In this application the concentration of
harmful compounds is the most  reasonable threshold quantity,   since acceptable concentrations would typically be given by some government agency. More recently, the SCF process has been used as a means to improve production of some biologically derived compounds \cite{Storms2012,Wang2017}. In these instances, dissolved $\mathrm{O_2}$ content, or dissolved $\mathrm{CO_2}$ content have been used as threshold quantities,  since they are good indicators of when the microorganism approaches the stationary phase in its growth cycle. In both scenarios,
the end goal is to maximize the amount of substrate processed by the
reactor, while maintaining stable operating conditions. SCF has also
been used to culture synchronized microbial cultures
\cite{Sauvageau2010}, where stability of the operating conditions is
much more important than output of the reactor.
It is the first scenario that we model, i.e. we consider the case that the microbial population is used to reduce two harmful compounds to an acceptable level.

Assuming that the time taken to empty and refill the tank is negligible, we can model the SCF process using a system of impulsive differential equations. Smith and Wolkowicz \cite{Smith1995} used this approach to model the growth of a single species with one limiting resource. Fan and Wolkowicz \cite{Fan2007} extended this model to include the possibility that the resource is limiting at large concentrations. C\'ordova-Lepe, Del Valle, and Robledo \cite{Cordova2014} also modeled single species growth in the SCF process, but used impulse dependent impulse times instead of the state dependent impulses used by the other models. 
For references on the theory of impulsive differential equations,
see e.g.\ \cite{Bauinov1989,Bauinov1993,Bauinov1995,Halanay1968,Samoilenko1995}.

When there are two (or more) resources in limited supply, it is
important to think about how the resources interact to promote growth.
If any of the resources can be used interchangeably with the same
outcome, we say the resources are substitutable. For instance, both
glucose and fructose are carbon sources for many bacteria, and can
fulfill the same purpose in bacterial growth. If all of the resources
are required in some way for growth, and the bacteria will die out if
any were missing, we say the resources are essential. For instance, both
carbon and nitrogen are required for growth of many bacteria, but
glucose cannot be used as a nitrogen source, so some other compound such
as nitrate is required. Growth and competition with two essential
resources has been studied in the chemostat \cite{Butler1987}, in the chemostat with delay \cite{Li2000}, and in the unstirred chemostat \cite{Wu2004}. In all of the aforementioned studies, the interaction of essential resources is through Liebig's law of the minimum \cite{Liebig1840}.
To illustrate the law of the minimum, consider a barrel with several staves of unequal length. Growth is limited by the resource in shortest supply in the same way that the capacity of the barrel is limited by length of the shortest stave. 

In this paper we investigate the dynamics of the self-cycling fermentation process
in a semi-batch culture
with two essential resources
that are assumed to be pollutants.
The goal is to reduce both pollutant
concentrations to acceptable levels.
In section \ref{sec_model} we introduce the model. In section \ref{sec_odes} we analyze the system of ordinary differential equations (ODEs)
associated 
with
the model introduced in section \ref{sec_model}. In section \ref{sec_full} we analyze the system of impulsive differential equations introduced in section \ref{sec_model}, and obtain our main results: Theorem \ref{thm_per_exists}, which gives necessary and sufficient conditions for the existence of a unique periodic orbit, and Theorem \ref{Theorem_summary}, which summarizes all of the possible long term dynamics of the model.
In section \ref{sec_max} we demonstrate numerically that the
emptying/refilling fraction can be used to maximize the output of the
SCF process. In section \ref{sec_discussion} we summarize our results
and  discuss the implications of our analysis. All figures were produced using Matlab \cite{MATLAB:2017b}.

\section{Model Formulation}
\label{sec_model}
For a given function $y(t)$ and time $\tau$,
using the standard notation for impulsive equations
we denote by
$\Delta y(\tau)= y(\tau^+)- y(\tau^-)$,
where
\begin{eqnarray*}
  y(\tau^+) \equiv \lim_{t \rightarrow \tau^+} y(t) \  \
  \ \mbox{and} \ \ \   y(\tau^-) \equiv \lim_{t
  \rightarrow \tau^-} y(t).
\end{eqnarray*}

Our model takes the form
\begin{equation}
 \left.\begin{array}{ccl}
  \frac{ds_1(t)}{dt}&=&-\frac{1}{Y_{1}}\min\{f_1(s_1(t)), f_2(s_2(t))\}x(t), \\
  \frac{ds_2(t)}{dt}&=&-\frac{1}{Y_{2}}\min\{f_1(s_1(t)), f_2(s_2(t))\}x(t), \\
  \frac{dx(t)}{dt}&=&(-D+\min\{f_1(s_1(t)), f_2(s_2(t))\})x(t), \nonumber 
\end{array}  \right\} \quad t \ne t_k
\end{equation}
~\vspace{-.25in}
\begin{equation} \label{modeleq}
~\hspace{3in}
\end{equation}
\begin{equation}
\left.\begin{array}{ccl}
  \Delta s_1(t_k)&=&-rs_1(t_k^-)+rs_1^\IN, \\
  \Delta s_2(t_k)&=&-rs_2(t_k^-)+rs_2^\IN,\\
  \Delta x(t_k)&=&-rx(t_k^-), \nonumber
\end{array} \right\} \quad t= t_k
\end{equation}
where $t_k$ are the times at which 
\begin{equation}
\text{either}\quad \big(s_1(t_k)=\bar{s}_1,s_2(t_k)\leq \bar{s}_2\big)
  \quad\text{or}\quad
  \big(s_1(t_k)\leq \bar{s}_1,s_2(t_k)=\bar{s}_2\big).
  \label{cond_impulse}
\end{equation}
Here, $t$ denotes time. The variables   $s_i, \ i=1,2$ denote the concentration
 of the limiting resources (assumed to be pollutants) in the fermentor  as a function of
$t$, with associated parameters $Y_i$,  the   cell yield constants,
 $s^\IN_i$,
the concentrations of  each  limiting resource in the medium added to the tank at the
beginning of each new  cycle,   and $\bar{s}_i$ the threshold
concentrations
  of limiting resource that trigger the emptying and
refilling process. Since we are considering the scenario where both $s_1$ and $s_2$ are pollutants, the emptying and refilling process is only triggered when both concentrations reach the acceptable levels $\bar{s}_1$ and $\bar{s}_2$ set by some environmental protection agency. The variable   $x$ denotes the biomass concentration of the population of
microorganisms that consume the resource at time $t$, assumed to have
death rate $D$.    The emptying/refilling fraction is denoted by $r$. It
is assumed that $D > 0, \ 0<r<1$ \ and for $i=1,2, \ Y_i>0,$
and  
$s^\IN_i>\bar{s}_i > 0$.

We call the times $t_k>0$, impulse times, and when they exist they form an
increasing sequence  that we denote $\{t_k\}_{k=1}^N$.  If
\eqref{cond_impulse} is satisfied at $t=0$ or $s_i(0)<\bar{s}_i, \ i=1,2$, then we assume that there is an
immediate impulse at time $t=0$. 
 We consider  the process to be successful if $N=\infty$ and the time
 between impulses, $t_k-t_{k-1}$ remains bounded. We consider the process  a failure if either there are a finite number of impulses, and hence $N$ is finite, or if the time between impulses becomes unbounded. 

 The two resources are assumed to be limiting essential  resources  (see
e.g.,  Tilman \cite{Tilman1982} or Grover \cite{Grover1997}) also
called complementary resources  (see Leon and Tumpson
\cite{Leontumpson1975}), and as in those studies 
we use Liebig's law of the minimum \cite{Liebig1840}
to model the uptake and growth of the microbial
population.

We assume that each response function $f_j(s)$, $j=1, 2$, in
(\ref{modeleq}) satisfies:
\begin{enumerate}
    \item[(i)] $f_j:\mathbb R_+\to \mathbb R_+$ is continuously differentiable;
     \item[(ii)] $f_j(0)=0$ and $f_j'(s)>0$ for $s>0$;
\end{enumerate}
 Define $\lambda_i$, $i=1,2$,
to be the value of each resource that satisfies $f_i(\lambda_i)=D$,
and refer to each $\lambda_i$ as a ``break-even concentration".
If $f_j$ is bounded below $D$, then we define the corresponding $\lambda_j=\infty$.

Between two consecutive impulses the system is governed by a system of
ordinary differential equations (ODE) that models a batch fermentor \cite{Cinar2003}, 
 
    \begin{eqnarray}
    \frac{ds_1(t)}{dt}&=&-\frac{1}{Y_{1}}\min\{f_1(s_1(t)), f_2(s_2(t))\}x(t), \nonumber\\
    \frac{ds_2(t)}{dt}&=&-\frac{1}{Y_{2}}\min\{f_1(s_1(t)),
    f_2(s_2(t))\}x(t), \label{odes}\\
    \frac{dx(t)}{dt}&=&(-D+\min\{f_1(s_1(t)), f_2(s_2(t))\})x(t). \nonumber 
    \end{eqnarray}
We will refer to system \eqref{odes} as the associated ODE system.

\section{Dynamics of System \eqref{odes}}
\label{sec_odes}
First we show that system \eqref{odes} is well-posed.

\begin{proposition}
Given any positive initial conditions $(s_1(0),s_2(0),x(0))$,
the solution $(s_1(t),s_2(t),x(t))$ of \eqref{odes} is defined for all $t\ge 0$
and remains positive.

Furthermore, $\lim_{t\to \infty}(s_1(t),s_2(t),x(t))$ exists, is initial
condition dependent,\\ $\lim_{t\to \infty} x(t) = 0$, and 
$\lim_{t\to \infty} s_i(t)<\lambda_i$ for at least one
$i\in\{1,2\}$.
\label{prop_positivity}
\end{proposition}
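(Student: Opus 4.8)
The plan is to treat the three assertions in turn, beginning with well-posedness and positivity and then extracting the long-term behaviour from two structural features of \eqref{odes}: the monotonicity of the resources and a linear combination that is nonincreasing along trajectories.

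First I would record that the right-hand side of \eqref{odes} is locally Lipschitz: each $f_j$ is $C^1$ by (i), and $\min\{f_1(s_1),f_2(s_2)\}$ is a minimum of two locally Lipschitz functions, hence locally Lipschitz. Thus local existence and uniqueness hold. For positivity, note that each coordinate hyperplane is invariant: on $\{s_1=0\}$ we have $f_1(0)=0$ by (ii), so $\min\{f_1(s_1),f_2(s_2)\}=0$ and $ds_1/dt=0$ (similarly for $s_2$), while on $\{x=0\}$ we have $dx/dt=0$. By uniqueness a trajectory starting in the open positive octant cannot reach the boundary, so $s_1,s_2,x$ remain positive on the maximal interval of existence. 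Writing $M(t):=\min\{f_1(s_1(t)),f_2(s_2(t))\}\ge 0$, the first two equations give $ds_i/dt=-Mx/Y_i\le 0$, so each $s_i$ is nonincreasing and $0<s_i(t)\le s_i(0)$. Moreover $\frac{d}{dt}(x+Y_1 s_1)=(-D+M)x-Mx=-Dx\le 0$, so $x+Y_1 s_1$ is nonincreasing; since $s_1>0$ this yields $x(t)\le x(0)+Y_1 s_1(0)$. The solution is therefore bounded, and global existence on $[0,\infty)$ follows.

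Next I would obtain the limits. Each $s_i$ is monotone and bounded, hence $s_i(t)\to s_i^*\ge 0$. The combination $x+Y_1 s_1$ is monotone and bounded below, so it converges, and therefore $x(t)\to x^*\ge 0$ exists as well. That the limits depend on the initial data is transparent from the conserved quantity $Y_1 s_1-Y_2 s_2$, which is constant along \eqref{odes} since $Y_1\,ds_1/dt=-Mx=Y_2\,ds_2/dt$. To show $x^*=0$, observe that by continuity $M(t)\to M^*:=\min\{f_1(s_1^*),f_2(s_2^*)\}$, so $ds_i/dt=-Mx/Y_i\to -M^*x^*/Y_i$. Since a differentiable function that converges and whose derivative also converges must have derivative limit zero, we get $M^*x^*=0$. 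If $x^*>0$ then $M^*=0$, which by (ii) forces $s_1^*=0$ or $s_2^*=0$; but then $dx/dt=(-D+M)x\to -Dx^*<0$, contradicting $x\to x^*>0$. Hence $x^*=0$, proving $\lim x=0$. Finally, suppose for contradiction that $s_1^*\ge\lambda_1$ and $s_2^*\ge\lambda_2$ (in particular both $\lambda_i$ finite), equivalently $M^*\ge D$. Because each $s_i$ is nonincreasing, $s_i(t)\ge s_i^*\ge\lambda_i$, so $f_i(s_i(t))\ge D$ and $M(t)\ge D$ for all $t$; then $dx/dt\ge 0$, so $x$ is nondecreasing and $x(t)\ge x(0)>0$, contradicting $x\to 0$. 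Therefore $M^*<D$, i.e.\ $s_i^*<\lambda_i$ for at least one $i$.

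The crux of the argument is the second assertion, $\lim x=0$: since $-D+M$ can change sign, $x$ itself need not be monotone, so one cannot read off its limit directly. The device that unlocks it is the nonincreasing combination $x+Y_1 s_1$, which forces $x$ to converge; the convergence of $x$ together with the resource equations then pins the limit to zero. The remaining inequalities are comparatively routine consequences of the monotonicity of the $s_i$ and the definition of the $\lambda_i$.
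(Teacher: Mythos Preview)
Your proof is correct. The main structural difference from the paper is the order in which you establish the two asymptotic claims. The paper first proves $\min\{f_1(s_1^*),f_2(s_2^*)\}<D$ directly (if not, strict monotonicity gives $\min\{f_1(s_1(t)),f_2(s_2(t))\}>D$ for all $t$, hence $x$ is increasing and $s_1'(t)<-Dx(0)/Y_1$, contradicting $s_1>0$), and only then deduces $x\to 0$ from the eventual exponential bound $x'<\tfrac{\gamma}{2}x$ with $\gamma=-D+\min\{f_1(s_1^*),f_2(s_2^*)\}<0$. You instead first extract the existence of $\lim x$ from the nonincreasing combination $x+Y_1 s_1$, then use the elementary ``if $f$ and $f'$ both converge then $f'\to 0$'' lemma applied to $s_i$ and to $x$ to force $x^*=0$, and finally derive $s_i^*<\lambda_i$ for some $i$ as a consequence of $x\to 0$. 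Your route makes the convergence of $x$ explicit and avoids using strict decrease of the $s_i$, at the cost of invoking the derivative-limit lemma twice; the paper's route is slightly more elementary but reverses the logical dependence between the two conclusions.
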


\begin{proof}
Since the vector field in \eqref{odes} is locally Lipschitz, the positivity of $(s_1(t),s_2(t),x(t))$
follows from  the  standard theory for the existence and uniqueness of solutions of  ODEs
(see e.g., \ \cite{Perko1996}).
Also observe that \[
  \frac{d}{dt}\left(
    x(t)+ \frac{Y_1}{2}s_1(t)+ \frac{Y_2}{2}s_2(t)
  \right)
  = -Dx(t)< 0,
\] so the solution $(s_1(t),s_2(t),x(t))$ exists and is bounded for $t$ in $[0,\infty)$.

From \eqref{odes},  $s_i'(t)< 0$, $i=1,2$. 
By the positivity of $s_i(t)$,  $\lim_{t\to \infty} s_i(t)\ge0$ exists.
Denote $\lim_{t\to \infty} s_i(t)=s_i^*$, $i=1,2$.

We claim that $\min\{f_1(s_1^*),f_2(s_2^*)\}< D$.
Suppose not. Then, $\min\{f_1(s_1^*),f_2(s_2^*)\}\ge D$.
Since both $s_1(t)$ and $s_2(t)$ are strictly decreasing functions,
it follows that $\min\{f_1(s_1(t)),f_2(s_2(t))\}> D$ for all $t\ge 0$.
From the equation of $x'(t)$ in \eqref{odes}, $x(t)$
 would be a strictly increasing function.
Then,   $s_1'(t)<\frac{-D}{Y_1}x(0)$
for all   $t\ge 0$,
contradicting the positivity of $s_1(t)$. Hence, $s_i^*<\lambda_i$ for
at least one $i\in\{1,2\}$.

Since $\min\{f_1(s_1^*),f_2(s_2^*)\}< D$, define
$\gamma=-D+\min\{f_1(s_1^*),f_2(s_2^*)\}<0$.
By the equation for $x'(t)$ in \eqref{odes},
$x'(t)<\frac{\gamma}{2} x(t)<0$, for all sufficiently large $t$.
Hence $x(t)\to 0$ as $t\to \infty$.
\end{proof}

Note that from \eqref{odes}, $Y_1\frac{ds_1}{dt}= Y_2\frac{ds_2}{dt}$. 
Define \begin{equation}
  R_{12}=\frac{Y_{2}}{Y_{1}}
  \quad\text{and}\quad
  R_{21}=\frac{Y_{1}}{Y_{2}}=\frac{1}{R_{12}}.
  \label{def_R12}
\end{equation}
Then, every trajectory of \eqref{odes} satisfies $\frac{ds_2}{ds_1}=R_{21}$.
\begin{lemma}
\label{lem_odes}
Let $(s_1(t),s_2(t),x(t))$ be a  solution of \eqref{odes}
on an interval $t\in [t_0,t_1]$ with positive
initial conditions.
Then, 
\begin{equation}
s_1(t)=s_1({t_0})+R_{12}(s_2(t)-s_2({t_0})),
  \label{s1s2relation}
\end{equation}
or equivalently \begin{equation}
  s_2(t)=s_2({t_0})+R_{21}(s_1(t)-s_1({t_0})),
  \label{s2s1relation}
\end{equation}
\begin{equation}
  x(t_1)-x(t_0)
= Y_1\int_{s_1(t_1)}^{s_1(t_0)}
\left(1-\frac{D}{ \min\big\{ f_1(v),f_2\big(s_2(t_1)+
R_{21}(v-s_1(t_1))\big) \big\} }\right)dv,
  \label{x_diff_s1}
\end{equation}
or equivalently \begin{equation}
  x(t_1)-x(t_0)
  = Y_2\int_{s_2(t_1)}^{s_2(t_0)}\left(1-\frac{D}{\min\{f_1\big(s_1(t_1)+
R_{12}(v-s_2)\big),f_2(v)\}}\right)dv.
  \label{x_diff_s2}
\end{equation}
\end{lemma}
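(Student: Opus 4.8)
The plan is to exploit the fact that the two resource equations in \eqref{odes} differ only by the constant factor $1/Y_i$, and then to use one resource as an independent variable in place of $t$.

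First I would establish the conservation law. Writing $M(t)=\min\{f_1(s_1(t)),f_2(s_2(t))\}x(t)$, the first two equations of \eqref{odes} read $Y_1 s_1'(t)=-M(t)=Y_2 s_2'(t)$, so $\frac{d}{dt}\big(Y_1 s_1(t)-Y_2 s_2(t)\big)=0$. Integrating from $t_0$ to $t$ gives $Y_1(s_1(t)-s_1(t_0))=Y_2(s_2(t)-s_2(t_0))$; dividing by $Y_1$ and using $R_{12}=Y_2/Y_1$ yields \eqref{s1s2relation}, and solving for $s_2$ gives the equivalent \eqref{s2s1relation}. Because this affine relation is independent of the base point, it holds equally with $t_0$ replaced by $t_1$, which is the form needed for the integral formulas.

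Next I would justify using $s_1$ as an independent variable. By hypotheses (i)--(ii), $f_j(s)>0$ for $s>0$, and by Proposition \ref{prop_positivity} the solution stays positive, so $M(t)>0$ and hence $s_1'(t)<0$ throughout $[t_0,t_1]$. Thus $t\mapsto s_1(t)$ is a strictly decreasing $C^1$ diffeomorphism onto $[s_1(t_1),s_1(t_0)]$, and $x$ may be regarded as a function of $s_1$. Along the trajectory the chain rule gives
\[
  \frac{dx}{ds_1}=\frac{x'(t)}{s_1'(t)}
  =\frac{(-D+\min\{f_1,f_2\})x}{-\frac{1}{Y_1}\min\{f_1,f_2\}x}
  =Y_1\left(\frac{D}{\min\{f_1(s_1),f_2(s_2)\}}-1\right),
\]
where the common factors $x$ and $\min\{f_1,f_2\}$ cancel precisely because both are nonzero. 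Using \eqref{s2s1relation} based at $t_1$ to replace $s_2$ by $s_2(t_1)+R_{21}(s_1-s_1(t_1))$ and then integrating in $s_1$ over $[s_1(t_1),s_1(t_0)]$, the fundamental theorem of calculus produces $x(t_1)-x(t_0)$ on the left; recording the orientation of the integral absorbs the sign and gives exactly \eqref{x_diff_s1}.

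Finally, I would obtain \eqref{x_diff_s2} by the identical argument with the roles of $s_1$ and $s_2$ interchanged: $s_2$ is likewise strictly decreasing, $\frac{dx}{ds_2}=Y_2\big(D/\min\{f_1,f_2\}-1\big)$, and \eqref{s1s2relation} based at $t_1$ is used to substitute $s_1=s_1(t_1)+R_{12}(s_2-s_2(t_1))$. The only step requiring genuine care is the change of variables itself: the formulas are valid precisely because $\min\{f_1,f_2\}$ and $x$ never vanish along the trajectory, which is where positivity from Proposition \ref{prop_positivity} together with $f_j(s)>0$ for $s>0$ is essential. Everything else is a direct computation.
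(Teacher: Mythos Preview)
Your proof is correct and follows essentially the same approach as the paper: derive the linear relation between $s_1$ and $s_2$ from $Y_1s_1'=Y_2s_2'$, then divide the $x'$ and $s_i'$ equations to express $dx/ds_i$ and integrate after substituting the affine relation for the other resource. Your write-up is simply more careful than the paper's in justifying the change of variables (monotonicity of $s_i$ and nonvanishing of $x$ and $\min\{f_1,f_2\}$), but the underlying argument is the same.
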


\begin{proof}
Solving the separable ODE,  $\frac{d s_1}{d s_2} = R_{12}$,  yields
\eqref{s1s2relation}, and solving $\frac{d s_2}{d s_1} = R_{21}$,
yields \eqref{s2s1relation}.  Dividing the $s_1'(t)$ equation in
\eqref{odes} by the  $x'(t)$
equation, substituting for $s_2(t)$ using  
\eqref{s2s1relation},
and then integrating both sides,
yields \eqref{x_diff_s1}.  We obtain \eqref{x_diff_s2}
similarly. 
\end{proof}

\section{Analysis of the Full System \eqref{modeleq} }
\label{sec_full}

\begin{figure}[htbp]
\centering
\includegraphics[width=.75\textwidth]{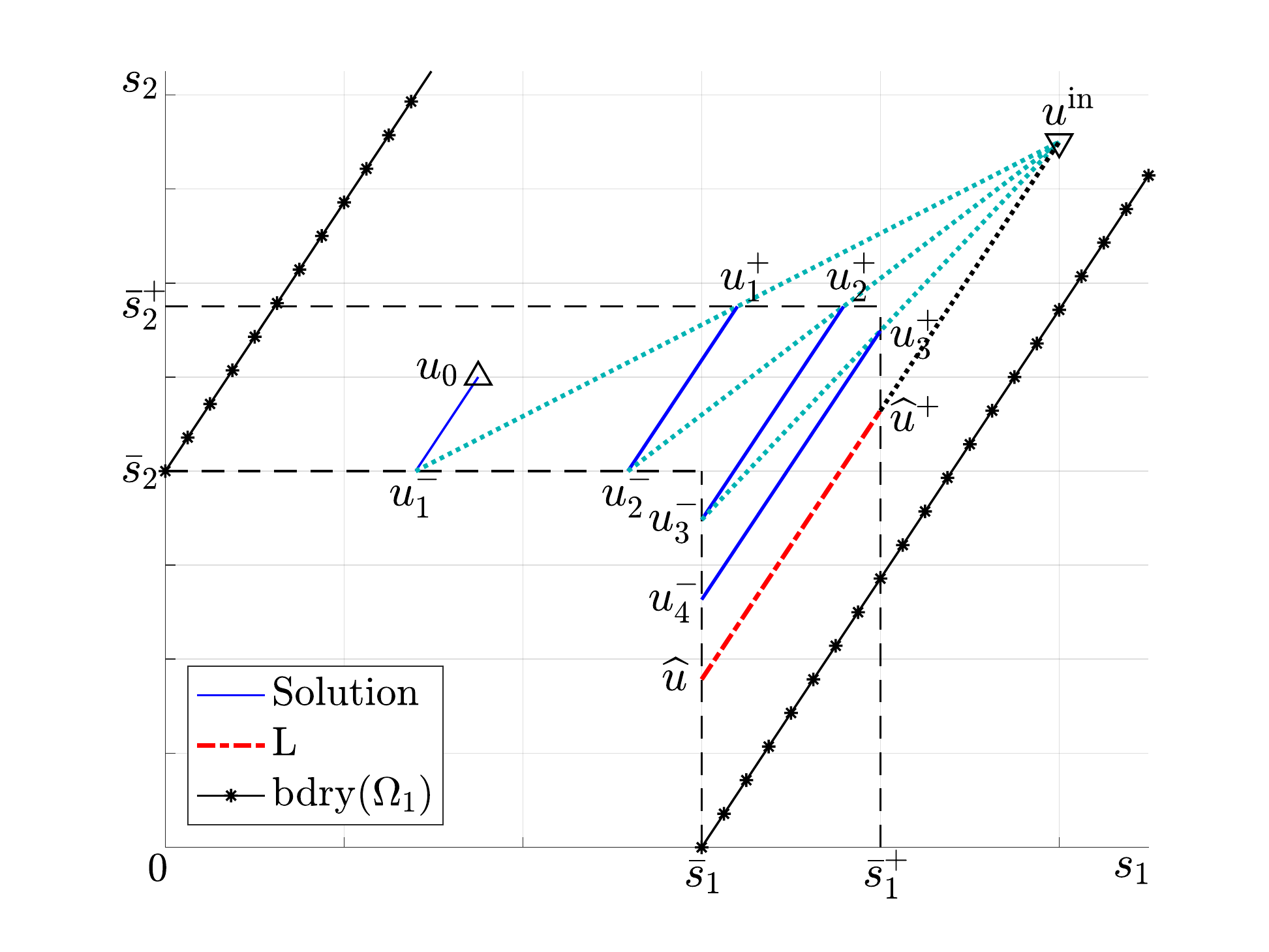}
\caption{%
$u_0=(s_1^0,s_2^0)$,
indicated by $\triangle$,
is the initial condition.
$u^\IN=(s_1^\IN,s_2^\IN)$,
indicated by $\bigtriangledown$,
is the input concentration.
$u_k^{\pm}=(s_1(t_k^{\pm}),s_2(t_k^{\pm})), \ k=1,2,\dots,$
satisfy $u_{k}^+= (1-r)u_k^-+ ru^{\mathrm{in}}$,
where $u^{\mathrm{in}}= (s_1^{\mathrm{in}},s_2^{\mathrm{in}})$.
Each connected piece of the solution
has slope $R_{21}$.
$\widehat{u}=(\bar{s}_1,\widehat{s_2})=(\bar{s}_1,s_2^\IN-R_{21}(s_1^\IN-\bar{s}_1)$
and $\widehat{u}^+=(\bar{s}_1^+,\widehat{s}_2^+)=((1-r)\bar{s}_1+r
s^\IN_1,s_2^\IN-(1-r)R_{21}(s^\IN_1-\bar{s}_1))$.
The set $\Omega_1$ lies above and to the right of
$\Gamma^-$ and between the two lines  with slope
$R_{21}$, through $(0,\bar{s}_2)$ and
$(\bar{s}_1,0)$,  respectively.
}
\label{fig_lines}
\end{figure}

First we visualize solutions of \eqref{modeleq} in the $s_1$-$s_2$ plane
as  illustrated  in Figure~\ref{fig_lines}.
Given any solution $(s_1(t),s_2(t),x(t))$ of \eqref{modeleq}
with  positive initial conditions,
let $t_0= 0$
and let $t_k$, $k\in \mathbb N$, denote the $k$th impulse time
if it exists.

Let $u_k^\pm= (s_1(t_k^\pm),s_2(t_k^\pm))$.
By \eqref{s2s1relation},
the trajectory of $(s_1(t),s_2(t))$, $t\in (t_k,t_{k+1})$,
is a line segment with slope $R_{21}$
and endpoints $u_k^+$ and $u_{k+1}^-$.
The conditions for impulses to occur are given in   \eqref{cond_impulse}.
Therefore, each point $u_k^-$ lies in
the following union of the two horizontal and vertical line segments:
\begin{equation*}
  \Gamma^-\equiv
  \big\{(s_1,\bar{s}_2): s_1\in [0,\bar{s}_1]\big\}
  \cup \big\{(\bar{s}_1,s_2): s_2\in [0,\bar{s}_2]\big\}.
\end{equation*}
Define
$$ u^\IN= (s_1^\IN,s_2^\IN) \quad \mbox{and} \quad 
\bar{s}_i^+= (1-r)\bar{s}_i+ rs_i^\IN, \ i=1,2.
$$
By the definition of  $\Delta s_i$ given in \eqref{modeleq},
\begin{equation}
  u_k^+= (1-r) u_k^-+ ru^\IN.
  \label{eq_ukplus}
\end{equation} 
This implies that each $u_k^+$ lies in
the following union of horizontal and vertical line segments:
\begin{equation*}
  \Gamma^+\equiv
  \big\{(s_1,\bar{s}_2^+): s_1\in [0,\bar{s}_1^+]\big\}
  \cup \big\{(\bar{s}_1^+,s_2): s_2\in [0,\bar{s}_2^+]\big\}.
\end{equation*}
Therefore, if impulses occur indefinitely, then
the total trajectory of $(s_1(t),s_2(t))$, $t\in [t_1,\infty)$,
is a countable union of line segments 
with slope $R_{21}$
and endpoints in $\Gamma^-\cup \Gamma^+$, (i.e., $u_k^+ \in \Gamma^+$ and
$u_k^-\in \Gamma^-$.) 

For any positive solution $(s_1(t),s_2(t),x(t))$ of \eqref{modeleq}
with $(s_1(0),s_2(0))$ lying between the coordinate axes and $\Gamma^-$,
i.e., $0\le s_1(0)\le \bar{s}_1$ and $0\le s_2(0)\le \bar{s}_2$,
an impulse occurs immediately at $t=0$,
and so, after at most a finite number of impulses, $(s_1(0^+),s_2(0^+))$ lies above or to the
right of $\Gamma^-$.
In the rest of this section,
we therefore assume $(s_1(0),s_2(0))$ lies above or to
the right of $\Gamma^-$, i.e., $s_i(0)>\bar{s}_i,$ for at least one $i\in\{1,2\}$.

The following proposition asserts that
system \eqref{modeleq} does not exhibit the phenomenon of {\it beating}.
That is,
the system possesses no solution
with impulse times
that form an increasing sequence with a finite accumulation point.

\begin{proposition}
Assume that $(s_1(t),s_2(t),x(t))$ is a positive solution of
\eqref{modeleq} with an infinite number of impulse times, $\{t_k\}_{k=1}^\infty$.
Then $\lim_{k\to\infty}t_k= \infty$.

\end{proposition}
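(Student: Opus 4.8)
The plan is to argue by contradiction. Since $\{t_k\}_{k=1}^\infty$ is an increasing sequence, if it fails to diverge it converges to some finite $T$, and then necessarily $t_{k+1}-t_k\to 0$. I would rule this out by pitting two estimates on the drop of the first resource across one inter-impulse interval against each other: a purely geometric lower bound that is uniform in $k$, and a dynamical upper bound that is driven to zero by $t_{k+1}-t_k\to 0$. Assume then that $t_k\to T<\infty$ and, for $k\ge 1$, set $\Delta_k=s_1(t_k^+)-s_1(t_{k+1}^-)$, the total decrease of $s_1$ over $(t_k,t_{k+1})$ (positive, since $s_1'<0$ between impulses).

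First I would record the boundedness needed for the dynamical estimate. Reusing the Lyapunov function $V=x+\tfrac{Y_1}{2}s_1+\tfrac{Y_2}{2}s_2$ from the proof of Proposition~\ref{prop_positivity}: between impulses $V'=-Dx<0$, while at an impulse $V(t_k^+)=(1-r)V(t_k^-)+r\big(\tfrac{Y_1}{2}s_1^\IN+\tfrac{Y_2}{2}s_2^\IN\big)\le\max\{V(t_k^-),\,\tfrac{Y_1}{2}s_1^\IN+\tfrac{Y_2}{2}s_2^\IN\}$, so by induction $V$, and hence $x$, is bounded on $[0,\infty)$ by a constant $K$. Moreover $u_k^+\in\Gamma^+$ forces $s_1(t_k^+)\le\bar{s}_1^+$, and since $s_1$ decreases between impulses, $s_1(t)\le S_1:=\max\{s_1(0^+),\bar{s}_1^+\}$; hence $\min\{f_1(s_1),f_2(s_2)\}\le f_1(S_1)=:M_1$ throughout, using that $f_1$ is increasing.

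The geometric lower bound is the heart of the argument. By \eqref{s2s1relation} the trajectory of $(s_1,s_2)$ on $(t_k,t_{k+1})$ is a segment of slope $R_{21}$, so $s_2(t_k^+)-s_2(t_{k+1}^-)=R_{21}\Delta_k$. Now $u_k^+\in\Gamma^+$ means that either $s_1(t_k^+)=\bar{s}_1^+$ or $s_2(t_k^+)=\bar{s}_2^+$, whereas $u_{k+1}^-\in\Gamma^-$ gives both $s_1(t_{k+1}^-)\le\bar{s}_1$ and $s_2(t_{k+1}^-)\le\bar{s}_2$. In the first case $\Delta_k\ge\bar{s}_1^+-\bar{s}_1=r(s_1^\IN-\bar{s}_1)$; in the second $R_{21}\Delta_k\ge\bar{s}_2^+-\bar{s}_2=r(s_2^\IN-\bar{s}_2)$. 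Either way
\[
  \Delta_k\ \ge\ \delta:=\min\big\{\,r(s_1^\IN-\bar{s}_1),\ R_{12}\,r(s_2^\IN-\bar{s}_2)\,\big\}\ >\ 0,
\]
a bound independent of $k$, reflecting the fixed gap between the nested $\mathrm{L}$-shaped sets $\Gamma^-$ and $\Gamma^+$.

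To close, I would integrate the $s_1$-equation of \eqref{odes} over $(t_k,t_{k+1})$ to get $\Delta_k=\tfrac{1}{Y_1}\int_{t_k}^{t_{k+1}}\min\{f_1(s_1),f_2(s_2)\}\,x\,dt\le\tfrac{M_1K}{Y_1}(t_{k+1}-t_k)$. Since $t_k\to T<\infty$ implies $t_{k+1}-t_k\to 0$, this forces $\Delta_k\to 0$, contradicting $\Delta_k\ge\delta>0$; hence $t_k\to\infty$. I expect the main obstacle to be the geometric lower bound: one must check that, no matter which arm of $\Gamma^+$ carries $u_k^+$ and which arm of $\Gamma^-$ the slope-$R_{21}$ segment next meets, the displacement is bounded below by one of the two fixed gaps $r(s_i^\IN-\bar{s}_i)$. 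The two cases above already suffice, because the ``mixed'' configurations only enlarge $\Delta_k$ relative to the bound used.
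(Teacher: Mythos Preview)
Your proof is correct and follows essentially the same approach as the paper: both establish a uniform lower bound $\delta$ (the paper's $m$) on $s_1(t_k^+)-s_1(t_{k+1}^-)$ from the geometry of $\Gamma^\pm$, and combine it with uniform bounds on $\min\{f_1,f_2\}$ and $x$ to bound $t_{k+1}-t_k$ away from zero. The one notable difference is in how $x$ is bounded: the paper uses \eqref{x_diff_s1} to get $x(t_{k+1}^+)<r(x(t_k^+)+M_0)$ and a comparison with the linear recursion $y_{k+1}=r(y_k+M_0)$, whereas you observe that $V=x+\tfrac{Y_1}{2}s_1+\tfrac{Y_2}{2}s_2$ is decreasing between impulses and is mapped to a convex combination $(1-r)V(t_k^-)+rC$ at each impulse, which is a cleaner route to the same uniform bound.
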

\begin{proof}
Between impulses, $s_1$ and $s_2$ are strictly decreasing for all $x(t)>0$, and therefore we can solve the first equation in \eqref{odes} for the time between impulses:
\begin{equation}\label{eq:timeseparation}
t_{k+1} - t_k = Y_1\int_{s_1(t_{k+1})}^{s_1(t_k)}
    \frac{1}{\min\big\{f_1(v),f_2(s_2(t_{k+1})+R_{21}(v-s_1(t_{k+1}))\big\}\; X_k(v)}
  dv
\end{equation}
where $X_k(v)$ is
defined by $X_k(s_1(t))=x(t)$ for $t\in (t_k,t_{k+1})$.

To show that $\{t_k\}_{k=1}^\infty$
has no finite accumulation point,
it suffices to show that there exists positive constants, $M_1$, $M_2$ and $m$,
such that
\begin{equation*}
  s_1(t_k^+)-s_1(t_{k+1}^-)> m
  \quad\text{for}\ k=1,2,\dots
\end{equation*}
and
\begin{equation*}
  \min\{f_1(s_1(t)),f_2(s_2(t))\}< M_1,
  \quad
  x(t)< M_2,
  \quad\text{for}\ t\ge 0,
\end{equation*}
since then the difference $t_{k+1}-t_k$ is greater than $\frac{Y_1m}{M_1M_2}$.

Since
$s_1(t_k^-)\le \bar{s}_1$, 
$s_2(t_k^-)\le \bar{s}_2$,
and either $s_1(t_k^+)= \bar{s}_1^+$ or $s_2(t_k^+)=\bar{s}_2^+$,
we have
\begin{equation*}
  \text{either}\quad
  s_1(t_k^+)- s_1(t_{k+1}^-)\ge \bar{s}_1^+-\bar{s}_1
  \quad\text{or}\quad
  s_2(t_k^+)- s_2(t_{k+1}^-)\ge \bar{s}_2^+-\bar{s}_2.
\end{equation*}
From \eqref{s1s2relation} it follows that
\begin{equation*}
  s_1(t_k^+)- s_1(t_{k+1}^-)\ge \min\{\bar{s}_1^+-\bar{s}_1,R_{12}(\bar{s}_2^+-\bar{s}_2)\}
  \equiv m.
\end{equation*}

Since after the first impulse occurs,
$(s_1(t),s_2(t))$ is bounded by $\Gamma^+$,
the existence of $M_1$ follows from the continuity of $f_1$ and $f_2$.
By \eqref{x_diff_s1},
there exists $M_0>0$ such that
\begin{equation}
  x(t)<x(t_k^+)+M_0,
  \quad \text{for}\ t\in (t_k,t_{k+1}).
  \label{ineq_xM0}
\end{equation}
From the relation that $x(t_k^+)=rx(t_k^-)$,
we obtain
\begin{equation*}
  x(t_{k+1}^+)< r(x(t_k^+)+M_0),
  \quad \text{for}\ k=1,2,\dots
\end{equation*}
By the comparison principle
applied to $x(t_k)$ and the sequence $\{y_k\}$
defined by $y_0=x(0)$ and $y_{k+1}=r(y_k+M_0)$, $k=1,2,\dots$,
\begin{equation}
  \limsup_{k\to \infty}x(t_k^+)
  \le \lim_{k\to \infty}y_k
  = \frac{rM_0}{1-r}.
  \label{ineq_xrM0}
\end{equation}
The existence of $M_2$ follows from \eqref{ineq_xM0} and \eqref{ineq_xrM0}.
\end{proof}

Define $\Omega_1$ to be the set of points $(s_1^0,s_2^0)$
such that, for some $x^0>0$,
the forward trajectory of the solution of \eqref{odes}
with initial value $(s_1^0,s_2^0,x^0)$
intersects $\Gamma^-$.
Then the boundary of $\Omega_1$
is the union of $\Gamma^-$
and the two lines of slope $R_{21}$
passing through $(\bar{s}_1,0)$ and $(0,\bar{s}_2)$, respectively.
Then, \begin{equation*}
  \Omega_1
  = \left\{
    (s_1,s_2): \begin{aligned}
      &s_1>\bar{s}_1 \;\text{or}\; s_2>\bar{s}_2,\;\text{and}\\
      &s_2 -\bar{s}_2 <R_{21}s_1,\; s_2> R_{21}(s_1-\bar{s}_1)
    \end{aligned}
  \right\}.
\end{equation*}

Define $\Omega_0$ to be the open set   complementary to $\Omega_1$
in the first quadrant   above and to
the right of 
$\Gamma^-$. Then, \begin{equation*}
  \Omega_0
  = \left\{
    (s_1,s_2): \begin{aligned}
      &s_1>\bar{s}_1 \;\text{or}\; s_2>\bar{s}_2,\;\text{and}\\
      &s_2 -\bar{s}_2 >R_{21}s_1\;\text{or}\; s_2< R_{21}(s_1-\bar{s}_1)
    \end{aligned}
  \right\}.
\end{equation*}

\begin{remark}
The sets $\Omega_0$ and $\Omega_1$ do not include the
marginal cases where $(s_1(0),s_2(0))$ lies on the lines of
slope $R_{21}$ passing through $(\bar{s}_1,0)$ or
$(0,\bar{s}_1)$. If $(s_1(0),s_2(0))$ lies on one of these lines
and $s_i(0)>0$ for $i=1,2$, then no impulses occur, since the
solution curve does not reach $\Gamma^-$ in finite time. If
$(s_1(0),s_2(0))=(\bar{s}_1,0)$ or
$(s_1(0),s_2(0))=(0,\bar{s}_2)$, then an impulse occurs immediately, and $(s_1(0^+),s_2(0^+))$ may be in either $\Omega_0$ or $\Omega_1$, depending on the location of $u^\IN$.
\end{remark}

In the following  case, the fermentation process fails.

\begin{lemma}
\label{lem_Omega0} 
If $(s_1(t),s_2(t),x(t))$ is a  solution of
\eqref{modeleq} with $(s_1(0),s_2(0))\in \Omega_0$,
then no impulses occur.
\end{lemma}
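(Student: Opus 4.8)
The plan is to reduce the claim to a purely planar, geometric statement: a solution produces an impulse exactly when its projection to the $s_1$--$s_2$ plane reaches $\Gamma^-$, so it suffices to show that the trajectory of $(s_1(t),s_2(t))$ starting in $\Omega_0$ never meets $\Gamma^-$. First I would record the two structural facts I need. By Lemma~\ref{lem_odes} (specifically \eqref{s2s1relation}) the projected trajectory lies on the line of slope $R_{21}$ through $(s_1(0),s_2(0))$, and by Proposition~\ref{prop_positivity} the solution stays in the open first quadrant for all $t\ge 0$, with $s_1(t)$ and $s_2(t)$ strictly decreasing. Thus the projected trajectory is contained in the first-quadrant portion of a single line of slope $R_{21}$, and the whole argument comes down to checking that this segment avoids the two pieces making up $\Gamma^-$.

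The key device is the quantity $I(t)=s_2(t)-R_{21}s_1(t)$, which is constant along any solution of \eqref{odes} since $ds_2/ds_1=R_{21}$. The definition of $\Omega_0$ splits into two cases according to which of its two defining inequalities holds. If $s_2(0)-\bar s_2>R_{21}s_1(0)$, i.e.\ $I(0)>\bar s_2$, then $s_2(t)=I(0)+R_{21}s_1(t)\ge I(0)>\bar s_2$ for all $t$ (using $s_1(t)\ge 0$); hence $s_2(t)$ never equals $\bar s_2$ and is never $\le\bar s_2$, so neither branch of \eqref{cond_impulse} can hold and the trajectory misses both segments of $\Gamma^-$. If instead $s_2(0)<R_{21}(s_1(0)-\bar s_1)$, i.e.\ $I(0)<-R_{21}\bar s_1$, then $s_1(t)=\big(s_2(t)-I(0)\big)/R_{21}\ge -I(0)/R_{21}>\bar s_1$ for all $t$ (using $s_2(t)\ge 0$); hence $s_1(t)$ is never equal to or less than $\bar s_1$, and again \eqref{cond_impulse} fails. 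In both cases the conserved quantity together with positivity pins the trajectory strictly on one side of $\Gamma^-$.

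Finally I would dispose of the endpoint $t=0$. Since $\Omega_0$ is open and disjoint from $\Gamma^-$, and every point of $\Omega_0$ satisfies $s_1(0)>\bar s_1$ or $s_2(0)>\bar s_2$ (so the initial data do not trigger the immediate impulse reserved for the case $s_i(0)<\bar s_i$, $i=1,2$), no impulse occurs at $t=0$ either. Combining this with the two cases above shows that \eqref{cond_impulse} is never satisfied, so the solution coincides with a solution of the associated ODE system for all $t\ge 0$ and no impulses occur.

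I do not expect a serious obstacle here: the content is the observation that $\Omega_0$ was built precisely as the complement (above and to the right of $\Gamma^-$) of the set of initial points whose slope-$R_{21}$ trajectory can reach $\Gamma^-$. The only place requiring care is the bookkeeping that matches each defining inequality of $\Omega_0$ to the correct segment of $\Gamma^-$, and the use of positivity ($s_1(t),s_2(t)\ge 0$) to restrict attention to the first-quadrant part of the trajectory line; without positivity the line would of course cross $\Gamma^-$ at a point with one negative coordinate.
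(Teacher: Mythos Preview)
Your argument is correct and follows the same geometric idea as the paper's proof: a trajectory starting in $\Omega_0$ lies on a line of slope $R_{21}$ that misses $\Gamma^-$ in the first quadrant, so the impulse condition \eqref{cond_impulse} is never triggered. The only difference is one of packaging: the paper's proof is a single sentence that invokes the definition of $\Omega_1$ (as the set of initial points whose forward ODE trajectory meets $\Gamma^-$) and then notes that $\Omega_0$ is its complement above and to the right of $\Gamma^-$, so by construction trajectories from $\Omega_0$ never reach $\Gamma^-$. You instead work directly from the explicit inequality description of $\Omega_0$, using the conserved quantity $s_2-R_{21}s_1$ and positivity to verify the same non-intersection in each of the two cases. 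Both routes amount to the same observation; yours simply unpacks what the definition of $\Omega_1$ already encodes.
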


\begin{proof}
Since $\Omega_0$ is complementary to $\Omega_1$,
$(s_1(t),s_2(t))\notin\Gamma^-$ for any $t\ge 0$,
and hence no impulses occur.
\end{proof}

Next we define
a Lyapunov-type function \begin{equation}
V(s_1,s_2) =  Y_2(s_2^\IN-s_2)
-Y_1(s_1^\IN-s_1).
  \label{def_V}
\end{equation}
Then,
\begin{equation*}
  \Omega_1= \left\{
    (s_1,s_2): \begin{aligned}
      &s_1>\bar{s}_1 \;\text{or}\; s_2>\bar{s}_2,\;\text{and}\\
      &V(0,\bar{s}_2) < V(s_1,s_2) < V(\bar{s}_1,0)
    \end{aligned}
  \right\}
\end{equation*}
and \begin{equation*}
  \Omega_0= \left\{
    (s_1,s_2): \begin{aligned}
      &s_1>\bar{s}_1 \;\text{or}\; s_2>\bar{s}_2,\;\text{and}\\
      &V(s_1,s_2)< V(0,\bar{s}_2) \;\text{or}\; V(s_1,s_2)> V(\bar{s}_1,0)
    \end{aligned}
  \right\}.
\end{equation*}
Note that the level sets of $V$ are straight lines with slope $R_{21}$
in the $s_1$-$s_2$ plane.   For any fixed value of $s_1$, $V(s_1,s_2)$
is a decreasing function of $s_2$, and for any fixed value of $s_2$,
$V(s_1,s_2)$ is an increasing function of $s_1$. 

\begin{lemma}
Assume  $(s_1(t),s_2(t),x(t))$ is a  solution of
\eqref{modeleq}  with positive initial
conditions.
Let $t_0= 0$
and  $t_k$, $k\in \mathbb N$, denote the $k$th impulse time, if it exists;
otherwise set $t_k=\infty$.
Define $V(t)= V(s_1(t),s_2(t))$.
Then, \begin{enumerate}
  \item[(i)]
  $\frac{d}{dt}V(t)= 0$ for $t\in (t_k,t_{k+1})$,
  \item[(ii)]
  $V(t_{k}^+)= (1-r)V(t_{k}^-)$, if $t_k<\infty$.
\end{enumerate}
\label{lem_vdot}
\end{lemma}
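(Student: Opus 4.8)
The plan is to verify the two claims directly using the dynamics already established for system \eqref{modeleq}. Part (i) concerns the evolution of $V$ between impulses, where the solution is governed by the associated ODE system \eqref{odes}, and part (ii) concerns the jump in $V$ across an impulse, governed by the impulse conditions in \eqref{modeleq}. Since $V$ is an affine function of $(s_1,s_2)$, both computations reduce to tracking how $(s_1,s_2)$ changes.

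For part (i), I would differentiate $V(t)= Y_2(s_2^\IN-s_2(t))- Y_1(s_1^\IN-s_1(t))$ along a solution on an interval $(t_k,t_{k+1})$ where no impulse occurs and the ODE system \eqref{odes} applies. By the chain rule,
\begin{equation*}
  \frac{d}{dt}V(t)= Y_2\cdot\!\left(-\frac{ds_2}{dt}\right)+ Y_1\cdot\frac{ds_1}{dt}
  = -Y_2\frac{ds_2}{dt}+ Y_1\frac{ds_1}{dt}.
\end{equation*}
Substituting the expressions for $\frac{ds_1}{dt}$ and $\frac{ds_2}{dt}$ from \eqref{odes}, both terms equal $-\min\{f_1(s_1),f_2(s_2)\}x(t)$ up to the yield normalizations, so $Y_1\frac{ds_1}{dt}= -\min\{f_1,f_2\}x= Y_2\frac{ds_2}{dt}$. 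Hence the two contributions cancel and $\frac{d}{dt}V(t)=0$. Equivalently, this is just the observation already recorded in the paper that $Y_1\frac{ds_1}{dt}= Y_2\frac{ds_2}{dt}$, which says exactly that $V$ is constant along trajectories of \eqref{odes}; indeed, the level sets of $V$ are the lines of slope $R_{21}$ that the solution follows between impulses.

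For part (ii), I would use the impulse relations $\Delta s_i(t_k)= -rs_i(t_k^-)+ rs_i^\IN$ from \eqref{modeleq}, which give $s_i(t_k^+)= (1-r)s_i(t_k^-)+ rs_i^\IN$ for $i=1,2$; this is the vector identity \eqref{eq_ukplus}. Then I compute
\begin{equation*}
  s_i^\IN- s_i(t_k^+)= s_i^\IN- (1-r)s_i(t_k^-)- rs_i^\IN
  = (1-r)\big(s_i^\IN- s_i(t_k^-)\big),
\end{equation*}
so each factor $s_i^\IN- s_i$ appearing in $V$ is simply scaled by $(1-r)$ across the impulse. Since $V$ is a linear combination of these two factors, it follows that $V(t_k^+)= (1-r)V(t_k^-)$. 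Neither part presents a genuine obstacle: the content is entirely the linearity of $V$ together with the fact that the yields are chosen precisely so that $V$ is a conserved quantity for the continuous flow and contracts by the factor $(1-r)$ at each impulse. The only point requiring a word of care is that part (i) is asserted on the open interval $(t_k,t_{k+1})$, where \eqref{odes} is valid, so the computation does not need to address the endpoints, and part (ii) is stated only under the hypothesis $t_k<\infty$, so the impulse relations genuinely apply.
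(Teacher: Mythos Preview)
Your proposal is correct and follows essentially the same approach as the paper: direct differentiation of $V$ along \eqref{odes} for part (i), and substitution of the impulse relation \eqref{eq_ukplus} into the definition of $V$ for part (ii). The paper's writeup is slightly more compact (it writes part (ii) as an inner product), but the underlying computation is identical.
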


\begin{proof}
(i) By the equations for $ds_i/dt$ in \eqref{odes}, \[
  \frac{d}{dt}V(t)
= \left(\frac{Y_2}{Y_2}-\frac{Y_1}{Y_1}\right)\min\{f_1(s_1(t)),f_2(s_2(t))\}x(t)= 0.
\]
(ii) Substituting \eqref{eq_ukplus}  into \eqref{def_V}, \[
  V(t_k^+)
  = (1-r)(s_1(t_k^-)-s_1^\IN,s_2(t_k^-)-s_2^\IN)\cdot (Y_1,-Y_2)
  =(1-r)V(t_k^-),
\] where $\cdot$ denotes the inner product in $\mathbb R^2$.
\end{proof}

By  the definition of $V(s_1,s_2)$ in \eqref{def_V} and Lemma \ref{lem_vdot},
the line \begin{equation}
  \begin{aligned}
  \{(s_1,s_2): V(s_1,s_2)=0\}
  =\{(s_1^\IN+vY_2,s_2^\IN+vY_1): v\in \mathbb R\}
  \end{aligned}
  \label{line_invariant}
\end{equation}
is invariant under \eqref{odes} for all $x(0)$. 
By symmetry,
we may assume the point $(\bar{s}_1,\bar{s}_2)$
lies on or above this invariant line,
i.e.,  $V(\bar{s}_1,\bar{s}_2)\le 0$,
or \begin{equation}
  s_2^\IN -\bar{s}_2 \le R_{21}(s_1^\IN-\bar{s}_1).
  \label{ineq_sbarR21}
\end{equation}

Next we show that  in the case that $(s_1^\IN,s_2^\IN)$
lies in $\Omega_0$,
once again the fermentation process
is doomed to fail.

\begin{lemma}
\label{lem_Omega0_sIN}
If $(s_1^\IN,s_2^\IN)\in\Omega_0$,
then every  solution of \eqref{modeleq}
with positive initial conditions has at most finitely many impulses.
\end{lemma}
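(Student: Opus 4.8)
The plan is to follow the affine functional $V$ from \eqref{def_V} along solutions and to exploit that the input concentration sits on its zero level set. The linchpin is the identity $V(u^\IN)=V(s_1^\IN,s_2^\IN)=Y_2(s_2^\IN-s_2^\IN)-Y_1(s_1^\IN-s_1^\IN)=0$. Since $s_i^\IN>\bar s_i$ for $i=1,2$, the point $u^\IN$ satisfies the first-quadrant condition defining $\Omega_0$, so the hypothesis $u^\IN\in\Omega_0$ together with the $V$-description of $\Omega_0$ forces $0=V(u^\IN)$ to lie outside the closed interval $I=[\,V(0,\bar s_2),\,V(\bar s_1,0)\,]$; explicitly, either $0<V(0,\bar s_2)$ or $0>V(\bar s_1,0)$. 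Invoking the symmetry reduction \eqref{ineq_sbarR21}, so that $V(\bar s_1,\bar s_2)\le 0$, the first alternative is impossible, since $V(0,\bar s_2)<V(\bar s_1,\bar s_2)\le 0$ by the monotonicity of $V$ in $s_1$ (the difference equals $Y_1\bar s_1>0$). Hence $V(\bar s_1,0)<0$ and $I\subset(-\infty,0)$ is bounded away from $0$.

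Next I would turn the impulse rule into a constraint on $V$. An impulse at $t_k$ requires $u_k^-\in\Gamma^-$ by \eqref{cond_impulse}, and on $\Gamma^-$ one has either $s_1=\bar s_1$ with $s_2\in[0,\bar s_2]$, or $s_2=\bar s_2$ with $s_1\in[0,\bar s_1]$; in both cases $s_1\le\bar s_1$ and $s_2\le\bar s_2$. Because $V$ is increasing in $s_1$ and decreasing in $s_2$, its maximum over $\Gamma^-$ is attained at the endpoint $(\bar s_1,0)$, so $V(t_k^-)\le V(\bar s_1,0)<0$ for every impulse. On the other hand, Lemma \ref{lem_vdot} gives $V(t_k^-)=V(t_{k-1}^+)=(1-r)V(t_{k-1}^-)$, whence $V(t_k^-)=(1-r)^{k-1}V(t_1^-)$. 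Since $0<1-r<1$, the sequence $V(t_k^-)$ tends to $0$, so for all large $k$ we would have $V(t_k^-)>V(\bar s_1,0)$, contradicting the preceding bound. Therefore only finitely many impulse times can occur.

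The argument also handles the degenerate starts noted earlier: if $(s_1(0),s_2(0))$ lies between the axes and $\Gamma^-$ there is an immediate impulse at $t=0$, but after at most finitely many such initial impulses the state lies above or to the right of $\Gamma^-$, and every subsequent impulse point lies on $\Gamma^-$, so the recursion and the contradiction apply to the tail of the impulse sequence. The only real subtlety, and the step I would treat most carefully, is the bookkeeping around the symmetry: one must verify that $u^\IN\in\Omega_0$ genuinely places $0$ outside $I$, and then discard the spurious alternative $0<V(0,\bar s_2)$ using \eqref{ineq_sbarR21}. One could instead bypass the symmetry reduction entirely and simply observe that in either alternative $I$ is a compact interval missing $0$, hence bounded away from $0$, which is all the contradiction requires. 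Everything else is the geometric decay of $V$ across impulses furnished by Lemma \ref{lem_vdot}.
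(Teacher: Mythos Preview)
Your argument is correct and follows essentially the same route as the paper: both use $V(u^\IN)=0$ together with \eqref{ineq_sbarR21} to force $V(\bar s_1,0)<0$, then derive a contradiction from the geometric decay $V(t_k^-)=(1-r)^{k-1}V(t_1^-)\to 0$ furnished by Lemma~\ref{lem_vdot}. The only cosmetic difference is that the paper phrases the final contradiction as ``$(s_1(t_{k-1}^+),s_2(t_{k-1}^+))\in\Omega_0$, hence no further impulse by Lemma~\ref{lem_Omega0}'', whereas you argue directly that $u_k^-\in\Gamma^-$ forces $V(t_k^-)\le V(\bar s_1,0)$; since $V$ is constant on the flow segment from $t_{k-1}^+$ to $t_k^-$, these are the same observation.
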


\begin{proof}
Note that, $V(s_1^\IN,s_2^\IN)=0$,
so the condition $(s_1^\IN,s_2^\IN)\notin\Omega_1$
implies that either $V(0,\bar{s}_2)>0$ or $V(\bar{s}_1,0)<0$.
Since $V(0,\bar{s}_2)<V(\bar{s}_1,\bar{s}_2)\le 0$,
by  assumption \eqref{ineq_sbarR21},
  $V(\bar{s}_1,0)<0$.

We proceed using  
proof by contradiction.  Suppose that a  solution
$(s_1(t),s_2(t),x(t))$ of \eqref{modeleq}
with  positive initial conditions
has infinitely many impulses.
Denote the impulse times by $t_1<t_2<\cdots$.
By Lemma \ref{lem_vdot}(ii), $V(s_1(t_k^-),s_2(t_k^-))\to 0$ as
$k\to\infty$.  Hence, $V(s_1(t_k^-),s_2(t_k^-))>V(\bar{s}_1,0)$ for some $k\ge 1$.
By Lemma \ref{lem_Omega0}, no more impulses can occur.
\end{proof}

In the case of $(s_1^\IN,s_2^\IN)\in \Omega_1$,
under  assumption \eqref{ineq_sbarR21},
the line given by \eqref{line_invariant} intersects $\Gamma^-$
at the point $(\bar{s}_1,\widehat{s}_2)$ given by \[
  \widehat{s}_2= s_2^\IN- R_{21}(s_1^\IN-\bar{s}_1).
\]
Define $L$ to be 
the portion of the line given by \eqref{line_invariant}
from the point $(\bar{s}_1,\widehat{s}_2)$
to its image via the impulsive map, namely \begin{align*}
  L
  &=\{(s_1,s_2): \bar{s}_1\le s_1\le \bar{s}_1^+,\; V(s_1,s_2)=0\}\\
  &=\{(s_1,s_2): \bar{s}_1\le s_1\le \bar{s}_1^+,\; s_2=\widehat{s}_2+
  R_{21}(  s_1-\bar{s}_1)\}.
\end{align*}
Then $L$ is invariant under \eqref{modeleq} for all $x(0)$. 

\subsection{Existence of Periodic Orbits} \label{existence_PO}
Next we investigate under what conditions  the reactor has a periodic solution and the process has the potential to succeed.   

We regard
the emptying/refilling fraction $r\in (0,1)$ as a variable.
 Without loss of generality, from now on we assume that $(\bar{s}_1,\bar{s}_2)$ lies on
or above $L$.  Otherwise, from \eqref{ineq_sbarR21}, by symmetry we can relabel the resources. Therefore, the right endpoint of $L$ is $(\bar{s}_1^+(r),\widehat{s}_2^+(r))$ given by \[
  \bar{s}_1^+(r)= (1-r)\bar{s}_1+ rs_1^\IN
  \quad\text{and}\quad
  \widehat{s}_2^+(r)=
  (1-r)\widehat{s}_2+ rs_2^\IN.
\]
By \eqref{x_diff_s1},
the net change in $x(t)$ over one cycle with impulse at
$(\bar{s}_1,\widehat{s}_2)$ is \begin{equation}
  \mu(r)
  = Y_1\int_{\bar{s}_1}^{\bar{s}_1^+(r)}
  \left(1-\frac{D}{\min\{f_1(v),f_2(\widehat{s}_2+
  R_{21}(v-\bar{s}_1))\}}\right)dv.
  \label{def_mu}
\end{equation}

\begin{figure}[t]
\centering
{\includegraphics[trim = 1cm 0cm 1cm 0cm, clip, width=.49\textwidth]{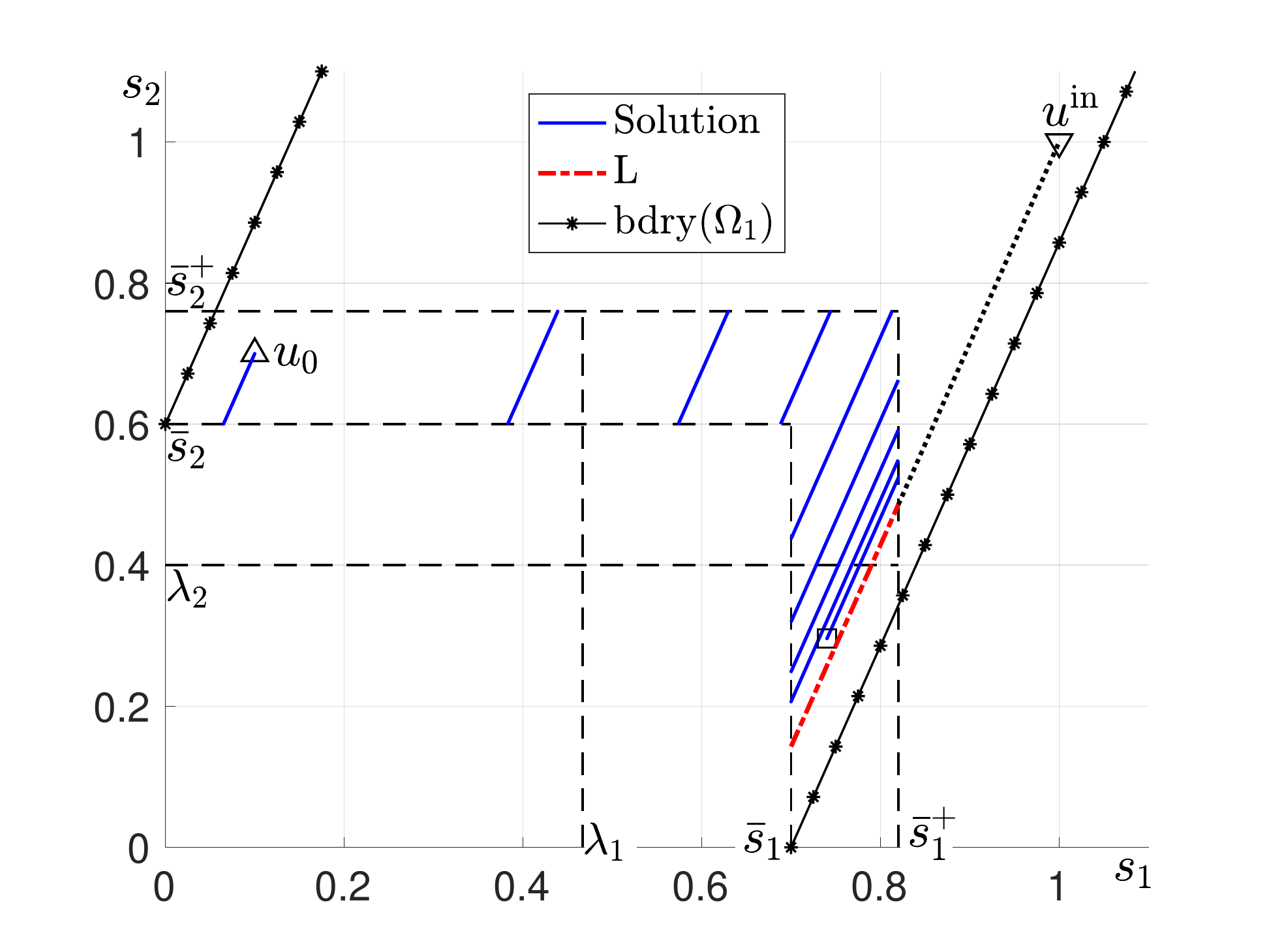}}\;
{\includegraphics[trim = 1cm 0cm 1cm 0cm, clip,width=.49\textwidth]{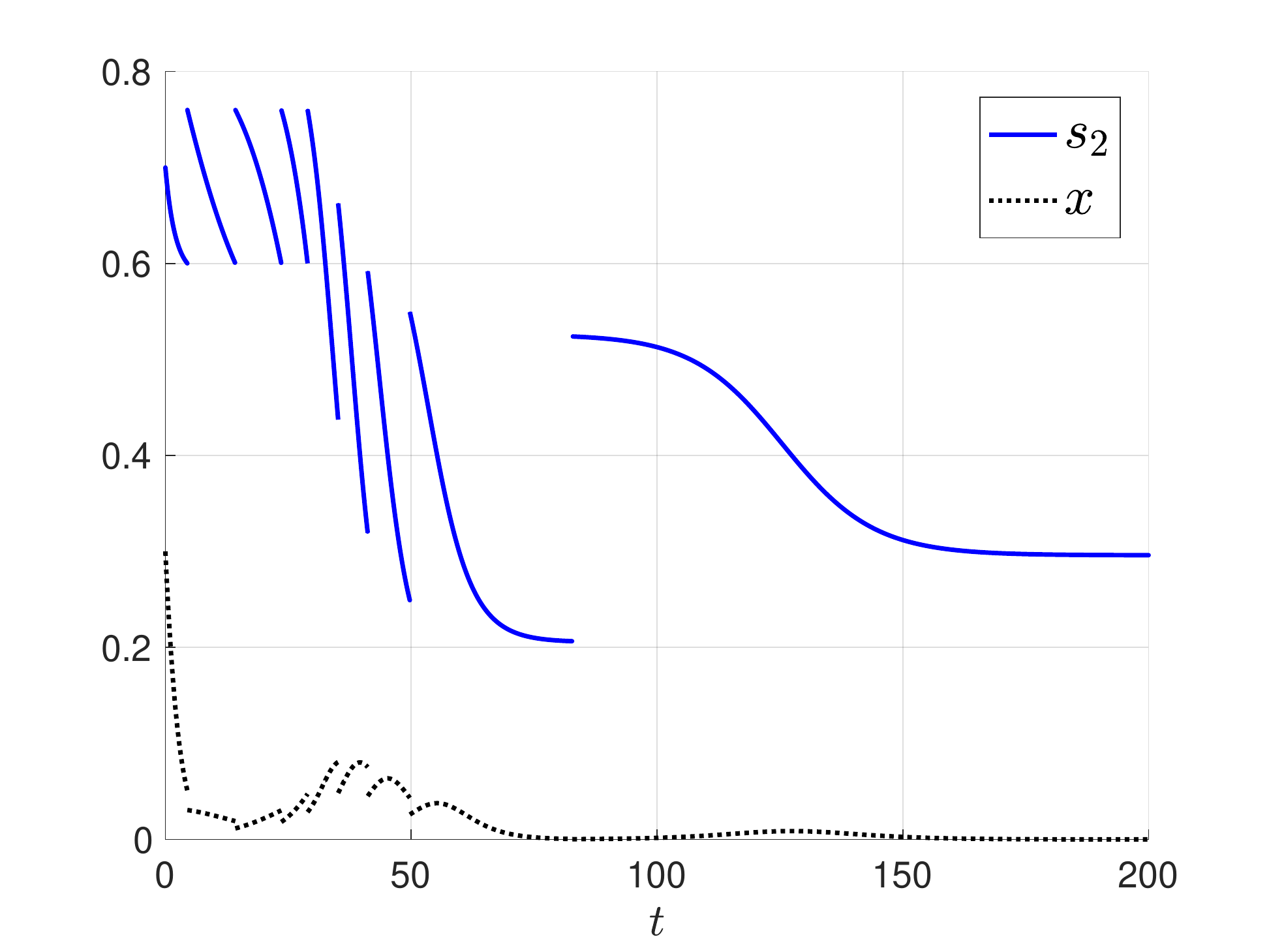}}
\par
\caption{%
No periodic solution exists,
and the $x$-component of every solution tends to $0$ as $t\to\infty$.
In the simulation,
the response functions are
$f_i(s_i)=\frac{m_is_i}{a_i+s_i}$, $i=1,2$,
with
$(m_1,m_2,a_1,a_2)=(2,2,1.4,1.2)$.
The parameters are
$(Y_1,Y_2,D)=(2,0.7,0.5)$
and
$(\bar{s}_1,\bar{s}_2,s_1^\IN,s_2^\IN,r)=(0.7,0.6,1,1,0.4)$.
The initial condition is $u_0=(s_1(0),s_2(0),x(0))=(0.1,0.7,0.3)$.
After a finite number of impulses, 
the orbit converges before reaching the threshold for an impulse, indicated by  $\square$.
The value of $\mu(r)\approx  -0.08<0$.
}
\label{fig_washout}
\end{figure}

We prove the following theorem concerning the existence and the uniqueness of periodic solutions.

\begin{theorem}
\label{thm_per_exists}
Assume $(s_1^\IN,s_2^\IN)\in \Omega_1$. If $r\in(0,1)$ and $\mu(r)>0$, then system (1) has a periodic orbit that is unique up to time translation and has one impulse per period.
On a periodic orbit,
$x(t_k^+)=\frac{(1-r)}{r}\mu(r)$ and
$x(t_k^-)=\frac{1}{r}\mu(r)$, for all 
 \ $k\in\mathbb{N}$.
 
 If $\mu(r)\le 0$, then system (1) has no periodic orbits.
 \end{theorem}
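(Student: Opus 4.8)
The plan is to reduce the search for periodic orbits to a one-dimensional fixed-point problem by exploiting the quantity $V$ from Lemma~\ref{lem_vdot}. First I would show that every periodic orbit must lie on the invariant line $\{V=0\}$. Indeed, by Lemma~\ref{lem_vdot}, $V$ is constant along the flow between impulses and is multiplied by $(1-r)$ at each impulse. If a periodic orbit had $p\ge 1$ impulses per period, then tracing the value of $V$ just before successive impulses shows it returns to itself only after being scaled by $(1-r)^p$; since $0<r<1$ forces $(1-r)^p\ne 1$, the common pre-impulse value must be $0$, so $V\equiv 0$ on the orbit. Because the line $\{V=0\}$ meets $\Gamma^-$ only at the single point $(\bar s_1,\widehat s_2)$ (under the normalization \eqref{ineq_sbarR21} together with the hypothesis $(s_1^\IN,s_2^\IN)\in\Omega_1$, which guarantees this intersection is a genuine point of $\Gamma^-$), every pre-impulse point coincides with $(\bar s_1,\widehat s_2)$ and every post-impulse point with its image $(\bar s_1^+,\widehat s_2^+)$. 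In particular the orbit has exactly one geometrically distinct impulse per period and its $(s_1,s_2)$-projection is the segment $L$.

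Next I would reduce the $x$-dynamics on $L$ to a scalar affine map. By \eqref{x_diff_s1}, the net increment of $x$ as the flow traverses $L$ from $(\bar s_1^+,\widehat s_2^+)$ down to $(\bar s_1,\widehat s_2)$ equals exactly $\mu(r)$ from \eqref{def_mu}, and, crucially, this increment is independent of the value of $x$ at the start of the cycle, since the integrand in \eqref{x_diff_s1} does not involve $x$. Writing $x^+=x(t_k^+)$ and $x^-=x(t_{k+1}^-)$, the inter-impulse flow gives $x^-=x^++\mu(r)$ and the impulse gives $x^+=(1-r)x^-$. A periodic orbit then corresponds to a fixed point of the affine map $x^+\mapsto (1-r)(x^++\mu(r))$, which has slope $1-r\in(0,1)$ and hence a unique fixed point; solving the two relations yields $x^+=\frac{1-r}{r}\mu(r)$ and $x^-=\frac1r\mu(r)$, matching the asserted values. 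The contraction property of this map also rules out periodic orbits with more than one impulse per period.

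Finally I would settle existence and the sign dichotomy. If $\mu(r)>0$, both $x^+$ and $x^-$ are positive, so the candidate is a genuine positive solution, and uniqueness up to time translation is immediate since its geometry and its $x$-values are completely determined; if $\mu(r)\le 0$, the only fixed point has $x^+\le 0$, which cannot correspond to a positive periodic solution, so none exists. The main obstacle is verifying that the candidate actually closes up when $\mu(r)>0$, i.e., that the forward flow from $(\bar s_1^+,\widehat s_2^+,x^+)$ truly reaches the threshold $s_1=\bar s_1$ in finite time with $x$ staying positive. Positivity of $x$ throughout is automatic because $x$ solves the linear homogeneous equation $x'=(-D+\min\{f_1,f_2\})x$, so its sign is preserved. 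To see that $s_1$ decreases all the way to $\bar s_1$, I would view $x$ as a function of $s_1$ along $L$: since $\min\{f_1,f_2\}$ increases along $L$ as $s_1$ increases, the slope $dx/ds_1=Y_1(D/\min\{f_1,f_2\}-1)$ is decreasing, so $x(s_1)$ is concave and attains its minimum over $[\bar s_1,\bar s_1^+]$ at an endpoint, namely $x^+>0$; hence $x$ cannot vanish prematurely, and since $s_1'$ is strictly negative and bounded away from $0$ on this compact segment, the threshold is reached in finite time.
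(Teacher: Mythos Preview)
Your proposal is correct and follows essentially the same route as the paper: reduce any periodic orbit to the invariant line $\{V=0\}$ (the paper uses this implicitly when it writes $x(t_{k+1}^-)=x(t_k^+)+\mu(r)$), then analyze the affine recursion $x(t_{k+1}^-)=(1-r)x(t_k^-)+\mu(r)$ to extract the unique fixed point and the sign dichotomy. Your argument is in fact more complete than the paper's in two places: you make the $V\equiv 0$ step explicit via Lemma~\ref{lem_vdot}, and you supply the concavity argument showing the candidate orbit actually reaches $s_1=\bar s_1$ in finite time with $x>0$, a point the paper asserts without justification.
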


See Figure~\ref{fig_washout} for an illustration 
of the case with $\mu(r)<0$.

\begin{proof}
Suppose there is a positive periodic solution $(s_1(t),s_2(t),x(t))$.
Since \eqref{odes} has no periodic orbits,
the solution has at least one impulse.
By periodicity there are infinitely many impulses.
Denote the impulse times by $t_1<t_2<\cdots$
and the number of impulses within a period by $N$.
Then $x(t_{N+k}^\pm)=x(t_k^\pm)$ for all $k\in\mathbb N$.
By \eqref{modeleq},
\begin{align*}
  x(t_{k+1}^-)= x(t_k^+)+ \mu(r)
  \quad\text{and}\quad
  x(t_k^+)= (1-r)x(t_k^-).
\end{align*}
Thus, \begin{align}
  x(t_{k+1}^-)= (1-r)x(t_k^-)+ \mu(r).
  \label{x_iter_m}
\end{align}
If $x(t_1^-)>\mu(r)$ (resp.\ $x(t_1^+)<\mu(r)$),
then from \eqref{x_iter_m}
it can be shown by induction
that $x(t_k^-)$ is a  strictly decreasing (resp.\
strictly increasing) sequence,
contradicting  $x(t_{N+k}^-)=x(t_k^-)$.  
Hence, on any periodic orbit there is only one impulse, and it
follows from  \eqref{x_iter_m}, on a periodic orbit   $x(t_k^-)=\frac{1}{r}\mu(r)$ for all $k\in\mathbb N$.
Therefore,    $\mu(r)>0$, since the solution must be positive, and if a
periodic orbit exists it is unique. 

If $\mu(r)>0$,
the solution of \eqref{modeleq}
with initial condition $(\bar{s}_1^+,\widehat{s}_2^+,\frac{1-r}{r}\mu(r))$
is periodic, since if$x(t_{k+1}^-)=\frac{1}{r}\mu(r)$, then
$x(t_{k+1}^+)=\frac{1-r}{r}\mu(r)$ and if
$x(t_k^+)=\frac{1-r}{r}\mu(r)$, then
$x(t_{k+1}^-)=\frac{1}{r}\mu(r)$. 
\end{proof}

\begin{proposition}
If $\mu(1)>0$, then there exists a unique $r^*\in [0,1)$ such that $\mu(r)>0$ for all $r\in(r^*,1]$ and $\mu(r)\leq 0$ for all $r\in(0,r^*].$
\end{proposition}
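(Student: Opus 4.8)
The plan is to treat $\mu$ as a $C^1$ function of $r\in[0,1]$ and read off the sign of $\mu(r)$ from the sign of its derivative via the Fundamental Theorem of Calculus. Write the integrand in \eqref{def_mu} as
\[
  g(v) = 1 - \frac{D}{\min\{f_1(v),\, f_2(\widehat{s}_2 + R_{21}(v-\bar{s}_1))\}},
\]
which is continuous with a strictly positive denominator on $[\bar{s}_1, s_1^\IN]$ (the argument of $f_2$ never drops below $\widehat{s}_2>0$, since $(s_1^\IN,s_2^\IN)\in\Omega_1$). Since $\bar{s}_1^+(r) = (1-r)\bar{s}_1 + r s_1^\IN$ is affine in $r$ with $\frac{d}{dr}\bar{s}_1^+(r) = s_1^\IN - \bar{s}_1 > 0$, the first step is to record $\mu(0)=0$ (empty interval of integration) and, by the Fundamental Theorem of Calculus,
\[
  \mu'(r) = Y_1\,(s_1^\IN - \bar{s}_1)\, g(\bar{s}_1^+(r)),
\]
so that the sign of $\mu'(r)$ coincides with the sign of $g(\bar{s}_1^+(r))$.

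The key step is the monotonicity of $g$. Both $v\mapsto f_1(v)$ and $v\mapsto f_2(\widehat{s}_2 + R_{21}(v-\bar{s}_1))$ are strictly increasing by assumption (ii) (the latter because $R_{21}>0$ and $f_2'>0$), and the pointwise minimum of two strictly increasing functions is again strictly increasing; hence the denominator of $g$ is strictly increasing, $D/(\cdot)$ is strictly decreasing, and $g$ is strictly increasing on $[\bar{s}_1, s_1^\IN]$. Composing with the increasing map $r\mapsto \bar{s}_1^+(r)$ shows that $r\mapsto g(\bar{s}_1^+(r))$ is strictly increasing, so $\mu'$ changes sign at most once, and only from negative to positive. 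Geometrically $\mu$ is ``valley-shaped'': either strictly increasing on all of $[0,1]$, or strictly decreasing up to a unique interior minimizer and strictly increasing thereafter.

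It then remains to locate $r^*$ using $\mu(0)=0$ and the hypothesis $\mu(1)>0$. If $g(\bar{s}_1)\ge 0$, then $g(\bar{s}_1^+(r))>0$ for every $r>0$, so $\mu$ is strictly increasing and $\mu(r)>\mu(0)=0$ on $(0,1]$; take $r^*=0$, and the assertion on $(0,r^*]=\emptyset$ holds vacuously. If $g(\bar{s}_1)<0$, then $\mu(1)>0$ forces $g(\bar{s}_1^+(r))$ to become positive before $r=1$ (otherwise $\mu'\le 0$ throughout and $\mu(1)\le\mu(0)=0$), so by continuity and strict monotonicity there is a unique $r_0\in(0,1)$ with $g(\bar{s}_1^+(r_0))=0$; thus $\mu'<0$ on $(0,r_0)$ and $\mu'>0$ on $(r_0,1]$. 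Therefore $\mu$ decreases strictly from $\mu(0)=0$ to its minimum at $r_0$ and then increases strictly to $\mu(1)>0$, whence $\mu<0$ on $(0,r_0]$; applying the intermediate value theorem together with strict monotonicity on $(r_0,1]$ yields a unique $r^*\in(r_0,1)$ with $\mu(r^*)=0$, giving $\mu\le 0$ on $(0,r^*]$ and $\mu>0$ on $(r^*,1]$. In both cases uniqueness of $r^*$ follows because the relevant zero lies on a branch where $\mu$ is strictly monotone.

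I expect the main obstacle to be the monotonicity step, and specifically the clean verification that the pointwise minimum of the two strictly increasing response terms is itself strictly increasing (handling the possible kink where $f_1$ and the shifted $f_2$ cross). Everything else follows mechanically from the Fundamental Theorem of Calculus and the intermediate value theorem once the valley shape of $\mu$ has been established.
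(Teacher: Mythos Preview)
Your proof is correct and follows essentially the same approach as the paper: both arguments hinge on the observation that the integrand $g$ is strictly increasing (by monotonicity of $f_1$ and $f_2$), so that $\mu'(r)=Y_1(s_1^\IN-\bar{s}_1)\,g(\bar{s}_1^+(r))$ changes sign at most once, from negative to positive. The only cosmetic difference is that the paper defines $r^*$ directly as $\max\{r\in[0,1]:\mu(v)\le 0\ \forall\,v\in[0,r]\}$ and argues that $g(\bar{s}_1^+(r^*))>0$ by maximality, whereas you first establish the valley shape of $\mu$ explicitly and then locate $r^*$ via the intermediate value theorem; these are two equivalent ways of packaging the same monotonicity idea.
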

\begin{proof}
Let \begin{equation}
  r_*= \max\{r\in [0,1]: \mu(v)\le 0\;\;\forall\;v\in [0,r]\}.
  \label{def_rstar}
\end{equation}
Note that $r_*$ is well-defined, since $\mu(r)$ is continuous and $\mu(0)=0$.
By definition, $\mu(r)\le 0$ for all $r\in[0,r_*]$.
 Since $\mu(r)$ is continuous, if $\mu(1)>0$ then $r_*<1$. Furthermore, 
  $\mu(r)>0$ for all $r\in (r_*,1]$, since
  $f_1(s_1)$ and $f_2(s_2)$ are monotone increasing, and so 
the integrand in \eqref{def_mu} with  $v=\bar{s}_1^+(r_*)$
must be positive. Otherwise, $r_*$ can be increased,
contradicting  definition \eqref{def_rstar}.
By the monotonicity of $f_1(s_1)$ and $f_2(s_2)$,
the integrand in \eqref{def_mu}
with $v=\bar{s}_1^+(r)$ remains positive
for $r_*<r   \le 1$.  Hence, $\mu(r)>0$ for $r\in (r_*,1]$.   
\end{proof}

\begin{remark}
If $\lambda_1\le \bar{s}_1$ and $\lambda_2\le \widehat{s}_2$,
then $\mu(r)>0$ for all $r\in (0,1)$, i.e.,\ $r_*=0$, because the integrand in \eqref{def_mu}
is then positive for all $v\in (\bar{s}_1,s^\IN)$.
\end{remark}

\subsection{Global Stability of Periodic Orbits}
\label{subsec_per_gas}
In this subsection we fix an $r\in (r_*,1)$,
where $r_*$ is the number given in Theorem \ref{thm_per_exists}.
Hence, $\mu(r)>0$
and a unique periodic orbit exists.

For each point $(s_1,s_2)\in \Omega_1$,
we denote by $\pi^-(s_1,s_2)$,
the point of intersection of 
the line through $(s_1,s_2)$ of slope $R_{21}$ with $\Gamma^-$, i.e., \[
  \pi^-(s_1,s_2)
  = \begin{cases}
    (s_1+ R_{12}(\bar{s}_2-s_2),\bar{s}_2),
    &\text{if}\;\; V(s_1,s_2)\le V(\bar{s}_1,\bar{s}_2),\\
    (\bar{s}_1,s_2+ R_{21}(\bar{s}_1-s_1)),
    &\text{if}\;\; V(s_1,s_2)\ge V(\bar{s}_1,\bar{s}_2).
  \end{cases}
\]

For each point $(s_1,s_2)\in \Gamma^-$,
we denote by $\pi^+(s_1,s_2)$
the pre-image of $(s_1,s_2)$ in $\Gamma^+$ under $\pi^-$.
Hence, $\pi^+(s_1,s_2)\in \Gamma^+$ satisfies $\pi^-(\pi^+(s_1,s_2))= (s_1,s_2)$
for all $(s_1,s_2)\in \Gamma^-$.
Let $g: \Gamma^- \to \Gamma^+$, be the image of $(s_1,s_2) \in\Gamma^-$ under the impulsive map,
i.e., \[
  g(s_1,s_2)= (1-r)(s_1,s_2)+ r(s_1^\IN,s_2^\IN).
\]

For each point $(s_1^0,s_2^0)\in \Omega_1$,  let
$I(s_1^0,s_2^0)$ denote
the net change in $x(t)$,
over the time interval from $t=0$ to the first impulse time.
Therefore, for  a solution $(s_1(t),s_2(t),x(t))$ of \eqref{modeleq}
satisfying $(s_1(0),s_2(0))=(s_1^0,s_2^0)$
that has at least one impulse,  by Lemma \ref{lem_odes}, 
\begin{equation*}
  \begin{aligned}
  &I(s_1^0,s_2^0)\\
  &=\begin{cases}
      Y_2\int_{\bar{s}_2}^{s_2^0}
      \left(1-\frac{D}{\min\{f_1(s_1^0+ R_{12}(v-s_{2}^0),f_2(v)\}}\right)dv,
      &\text{if}\;\; V(s_1,s_2)\le V(\bar{s}_1,\bar{s}_2),\\
      Y_1\int_{\bar{s}_1}^{s_1^0}
      \left(1-\frac{D}{\min\{f_1(v),f_2(s_2^0+ R_{21}(v-s_{1}^0)\}}\right)dv,
      &\text{if}\;\; V(s_1,s_2)\ge V(\bar{s}_1,\bar{s}_2).
  \end{cases}
  \end{aligned}
\end{equation*}
Note that 
$\mu(r)=I(\pi^+(\bar{s}_1,\widehat{s}_2))$. 

If $(s_1(t),s_2(t),x(t))$ is a solution of \eqref{odes}
with $(s_1(t_0),s_2(t_0))= (s_1^0,s_2^0) \in \Gamma^+$
and $(s_1(t_1),s_2(t_1))= (s_1^1,s_2^1) \in \Gamma^-$ for some $t_0<t_1$,
 then  $(s_1^0,s_2^0)= \pi^+(s_1^1,s_2^1)$ and the net change in $x(t)$ over the time interval $[t_0,t_1]$
is $I(\pi^+(s_1^1,s_2^1))$.
Since $V(\bar{s}_1,\bar{s}_2)<V(\bar{s}_1^+,\bar{s}_2^+)<V(s_1^\IN,s_2^\IN)$,
from $V(s_1^\IN,s_2^\IN)=V(\bar{s}_1,\widehat{s}_2)$
there exists $\widetilde{s}_2\in (\widehat{s}_2,\bar{s}_2)$
such that $V(\bar{s}_1,\widetilde{s}_2)=V(\bar{s}_1^+,\bar{s}_2^+)$.
Define \begin{equation}
    \Gamma^-_A
  = \{(\bar{s}_1,s_2): 0<s_2\le\widetilde{s}_2: I(\pi^+(s_1,s_2))>0\}.
  \label{def_Gamma1A}
\end{equation}

\begin{lemma}
\label{lem_GammaA}
Assume $(s_1^\IN,s_2^\IN)\in \Omega_1$ 
and $\mu(r)>0$.
Then, \begin{equation}
  \Gamma^-_A
  = \{\bar{s}_1\}\times (s_{2\sharp},\widetilde{s}_2]
  \label{GammaA_s2sharp}
\end{equation}
for some $s_{2\sharp}\in (0,\widehat{s}_2)$.
\end{lemma}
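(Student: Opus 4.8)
The plan is to analyze the single function
$\Phi(s_2) := I(\pi^+(\bar{s}_1,s_2))$ for $s_2\in(0,\widetilde{s}_2]$ and to show that the set $\Gamma^-_A$ in \eqref{def_Gamma1A} is exactly its super-level set $\{\Phi>0\}$, which I will then identify with an interval of the asserted form. The crux of the argument is a geometric reduction that turns $\Phi$ into an integral whose limits of integration are independent of $s_2$.

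First I would establish that reduction. Since $\pi^+$ moves a point of $\Gamma^-$ along a level line of $V$ (a line of slope $R_{21}$, by the remark following \eqref{def_V} and Lemma \ref{lem_vdot}(i)) until it meets $\Gamma^+$, and since $V(\bar s_1,s_2)\ge V(\bar s_1,\widetilde s_2)=V(\bar s_1^+,\bar s_2^+)$ for every $s_2\le\widetilde s_2$ (with equality only at $s_2=\widetilde s_2$), the key point is that $\pi^+(\bar s_1,s_2)$ always lands on the vertical arm $\{(\bar s_1^+,s_2'):0\le s_2'\le\bar s_2^+\}$ of $\Gamma^+$. Comparing the ranges of $V$ on the two arms of $\Gamma^+$, the value $V(\bar s_1,s_2)$ lies in the range attained on the vertical arm precisely because $V(\bar s_1,0)<V(\bar s_1^+,0)$. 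Hence the first coordinate of $\pi^+(\bar s_1,s_2)$ equals $\bar s_1^+$ regardless of $s_2$, and the second branch of the formula for $I$ (valid since $V\ge V(\bar s_1,\bar s_2)$) gives
\[
\Phi(s_2)
= Y_1\int_{\bar s_1}^{\bar s_1^+}
\left(1-\frac{D}{\min\{f_1(v),\,f_2(s_2+R_{21}(v-\bar s_1))\}}\right)dv,
\]
where the limits no longer depend on $s_2$.

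With fixed limits, monotonicity is immediate: by (ii) the map $s_2\mapsto f_2(s_2+R_{21}(v-\bar s_1))$ is increasing for each fixed $v$, so the integrand is nondecreasing in $s_2$, and therefore $\Phi$ is nondecreasing on $(0,\widetilde s_2]$. For $s_2>0$ the integrand is continuous and bounded, because $\min\{f_1(v),f_2(s_2+\cdots)\}\ge\min\{f_1(\bar s_1),f_2(s_2)\}>0$ on the compact $v$-interval, so $\Phi$ is continuous there. I would then analyze the limit $s_2\to 0^+$: since $f_2(0)=0$ and $f_2'(0)>0$, the integrand is comparable to $-D/f_2(R_{21}(v-\bar s_1))$ near $v=\bar s_1$, whose integral diverges; bounding $\min\{f_1,f_2\}\le f_2$ gives $\Phi(s_2)\to-\infty$ as $s_2\to 0^+$. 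In particular $\Phi(s_2)\le 0$ for all small $s_2$, so $s_{2\sharp}:=\inf\{s_2\in(0,\widetilde s_2]:\Phi(s_2)>0\}$ satisfies $s_{2\sharp}>0$.

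Finally I would combine these facts. Because $\Phi(\widehat s_2)=I(\pi^+(\bar s_1,\widehat s_2))=\mu(r)>0$ (the identity noted before \eqref{def_Gamma1A}, together with the standing hypothesis $\mu(r)>0$) and $\widehat s_2<\widetilde s_2$, monotonicity yields $\Phi(\widetilde s_2)\ge\Phi(\widehat s_2)>0$. Since $\Phi$ is nondecreasing and continuous with $\Phi\to-\infty$ at $0^+$ and $\Phi(\widetilde s_2)>0$, its super-level set is $\{\Phi>0\}=(s_{2\sharp},\widetilde s_2]$ with $\Phi(s_{2\sharp})=0$, and $\Phi(\widehat s_2)>0$ forces $s_{2\sharp}<\widehat s_2$. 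This is exactly \eqref{GammaA_s2sharp} with $s_{2\sharp}\in(0,\widehat s_2)$. I expect the main obstacle to be the geometric reduction to fixed integration limits, namely verifying that $\pi^+(\bar s_1,s_2)$ stays on the vertical arm of $\Gamma^+$, since this is what converts a two-effect comparison (moving both the integrand and the upper limit of integration) into clean monotone dependence on $s_2$; the endpoint behavior as $s_2\to 0^+$ is the second point requiring care.
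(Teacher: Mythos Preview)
Your proof is correct and follows the same approach as the paper: reduce $I(\pi^+(\bar s_1,s_2))$ to the fixed-limit integral (the paper invokes the standing assumption \eqref{ineq_sbarR21} for this, which is exactly what forces $\pi^+$ onto the vertical arm of $\Gamma^+$), then use monotonicity of $f_2$ to conclude that the super-level set is an interval with right endpoint $\widetilde s_2$ containing $\widehat s_2$. Your divergence argument as $s_2\to 0^+$ is a genuine addition---the paper simply asserts $s_{2\sharp}\in(0,\widehat s_2)$ without verifying the strict lower bound $s_{2\sharp}>0$.
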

\begin{proof}
By  assumption \eqref{ineq_sbarR21},
for any $s_2\in [0,\widetilde{s}_2]$, \begin{equation*}
  I(\pi^+(\bar{s}_1,s_2))
  =Y_1\int_{\bar{s}_1}^{\bar{s}_1^+}
    \left(1-\frac{D}{\min\{f_1(v),f_2(s_2+
R_{21}(v-\bar{s}_{1})\}}\right)dv.
\end{equation*}
Let \[
  \Lambda
  =\{s_2\in (0,\widetilde{s}_2): I(\pi^+(\bar{s}_1,s_2))> 0\}.
\]
By the monotonicity of $f_1(s_1)$ and $f_2(s_2)$,
since $I(\pi^+(\bar{s}_1,\widehat{s}_2))=\mu(r)>0$,
$\Lambda$
is an interval containing $\widehat{s}_2$ with right endpoint $\widetilde{s}_2$.
Hence, $\Gamma^-_A$
takes the form \eqref{GammaA_s2sharp}
for some $s_{2\sharp}\in (0,\widehat{s}_2)$.
\end{proof}

Let $\Omega_{1A}$ be the set of points $(s_1^0,s_2^0)$
such that, for some $x^0>0$,
the forward trajectory of the solution of \eqref{odes}
with initial value $(s_1^0,s_2^0,x^0)$
passes through $\Gamma^-_A$.
Then, \begin{equation}
  \Omega_{1A}
  = \{(s_1,s_2)\in \Omega_1: \pi^-(s_1,s_2)\in \Gamma^-_A\}.
  \label{def_Omega1A}
\end{equation}
In the case $\mu(r)>0$,
by \eqref{GammaA_s2sharp}, \begin{equation}
  \Omega_{1A}
  = \{(s_1,s_2)\in \Omega_1: V_- < V(s_1,s_2) < V_+\}
  \label{eq_Omega1A_Vbar},
\end{equation}
where $V_-=V(\bar{s}_1,\widetilde{s}_2)$
and $V_+=V(\bar{s}_1,s_{2\sharp})$.

\begin{lemma}
\label{lem_Omega1A}
Assume $(s_1^\IN,s_2^\IN)\in \Omega_1$ 
and $\mu(r)>0$.
Let $(s_1(t),s_2(t),x(t))$ be a  solution of
\eqref{modeleq} with $x(0)>0$ and 
$$
  (s_1(0),s_2(0))\in \Omega_{1A}.
$$
 The solution converges to the unique periodic solution
given by Theorem~\ref{thm_per_exists},if and only if  $x(0)>
-I(s_1(0),s_2(0))$. If $x(0) \le -I(s_1(0),s_2(0))$,  then no impulses occur. 
\end{lemma}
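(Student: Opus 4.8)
The plan is to split the statement into a criterion governing whether the \emph{first} impulse occurs, which at once yields the ``no impulses'' conclusion and the ``only if'' half of the equivalence, and a forward argument showing that once the first impulse occurs the solution has infinitely many impulses whose data converge to those of the periodic orbit of Theorem~\ref{thm_per_exists}. Throughout I parametrise each inter-impulse arc by $s_1$, using that between impulses $(s_1,s_2)$ moves down a line of slope $R_{21}$ with $V$ constant (Lemma~\ref{lem_vdot}(i)), and I record the pre-impulse data $u_k^-=(s_1(t_k^-),s_2(t_k^-))$, $x_k:=x(t_k^-)$, together with $V_k:=V(u_k^-)$.

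\textbf{First-impulse criterion.} Since $(s_1(0),s_2(0))\in\Omega_{1A}\subset\Omega_1$, by \eqref{eq_Omega1A_Vbar} its $V$-value exceeds $V_-=V(\bar{s}_1,\widetilde{s}_2)>V(\bar{s}_1,\bar{s}_2)$ (using $\widetilde{s}_2<\bar{s}_2$), so the slope-$R_{21}$ line through the initial point meets $\Gamma^-$ on its vertical segment $\{s_1=\bar{s}_1\}$; in particular $\bar{s}_1\le s_1(0)$ and $\pi^-(s_1(0),s_2(0))\in\Gamma^-_A$. Define
\[
  \phi(\xi)= x(0)+ Y_1\int_{\xi}^{s_1(0)}\left(1-\frac{D}{\min\{f_1(v),f_2(s_2(0)+R_{21}(v-s_1(0)))\}}\right)dv,
\]
so that by \eqref{x_diff_s1} $\phi(\xi)$ is the biomass the trajectory would carry on reaching $s_1=\xi$, and $\phi(\bar{s}_1)=x(0)+I(s_1(0),s_2(0))$. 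By Proposition~\ref{prop_positivity} the associated ODE solution satisfies $s_1(t)\downarrow s_1^*$, $x(t)\to0$ and $\min\{f_1(s_1^*),f_2(s_2^*)\}<D$; letting $t\to\infty$ in \eqref{x_diff_s1} gives $\phi(s_1^*)=0$. For $\xi\le s_1^*$, monotonicity of $f_1,f_2$ forces the minimum to stay below $D$, hence $\phi'>0$ there, so $\phi<0$ on $\{\xi<s_1^*\}$, while $\phi>0$ on $(s_1^*,s_1(0)]$ because it equals the positive biomass carried by the actual flow. Thus the trajectory reaches $\{s_1=\bar{s}_1\}$ in finite time (equivalently $s_1^*<\bar{s}_1$) if and only if $\phi(\bar{s}_1)>0$, i.e.\ $x(0)>-I(s_1(0),s_2(0))$; otherwise no impulse occurs and Proposition~\ref{prop_positivity} gives $x(t)\to0$, ruling out convergence to the positive periodic orbit. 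This establishes the last sentence and the ``only if'' direction.

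\textbf{Persistence of impulses.} Assuming $x(0)>-I(s_1(0),s_2(0))$, the first impulse occurs with $x_1=\phi(\bar{s}_1)>0$ and $u_1^-\in\Gamma^-_A$. I would argue by induction that for every $k$ the $k$th impulse occurs with $x_k>0$ and $u_k^-\in\Gamma^-_A$, equivalently $V_k\in(V_-,V_+)$ by \eqref{GammaA_s2sharp} and \eqref{eq_Omega1A_Vbar}; note $V_-<0<V_+$. Given the $k$th impulse, set $u_k^+=g(u_k^-)$, so $x(t_k^+)=(1-r)x_k>0$ and, by Lemma~\ref{lem_vdot}(ii), $V(u_k^+)=(1-r)V_k\in(V_-,V_+)$. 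Hence $\pi^-(u_k^+)$ lies on the vertical segment with $V$-value in $(V_-,V_+)$, so $\pi^-(u_k^+)\in\Gamma^-_A$. Since $u_k^+$ has first coordinate $\bar{s}_1^+$ and lies on the flow line through $\pi^-(u_k^+)$, it is exactly the point of $\Gamma^+$ mapped to $\pi^-(u_k^+)$, i.e.\ $u_k^+=\pi^+(\pi^-(u_k^+))$; therefore the net biomass change $\mu_k:=I(u_k^+)=I(\pi^+(\pi^-(u_k^+)))$ is positive by the definition \eqref{def_Gamma1A} of $\Gamma^-_A$. Applying the first-impulse criterion at $u_k^+$, the $(k+1)$th impulse occurs iff $x(t_k^+)+\mu_k>0$, which holds as both terms are positive; then $x_{k+1}=(1-r)x_k+\mu_k>0$ and $u_{k+1}^-=\pi^-(u_k^+)\in\Gamma^-_A$, closing the induction. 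I expect this persistence step, and in particular the identification $\mu_k=I(\pi^+(\pi^-(u_k^+)))>0$, to be the main obstacle, since it is what prevents the process from stalling after finitely many impulses.

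\textbf{Convergence.} With infinitely many impulses secured, Lemma~\ref{lem_vdot}(ii) gives $V_{k+1}=(1-r)V_k$, so $V_k\to0$ geometrically; as $s_2$ is a strictly monotone function of $V$ on the vertical segment and $V=0$ corresponds to $\widehat{s}_2$, we obtain $u_k^-\to(\bar{s}_1,\widehat{s}_2)$. Continuity of $I\circ\pi^+$ then yields $\mu_k\to I(\pi^+(\bar{s}_1,\widehat{s}_2))=\mu(r)$. The scalar iteration $x_{k+1}=(1-r)x_k+\mu_k$, with contraction factor $1-r\in(0,1)$ and $\mu_k\to\mu(r)$, forces $x_k\to\mu(r)/r$, matching the pre-impulse value on the periodic orbit in Theorem~\ref{thm_per_exists}. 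Finally the inter-impulse times \eqref{eq:timeseparation}, depending continuously on $(u_k^+,x(t_k^+))$, converge to the period, and continuous dependence of the ODE flow on initial data propagates the convergence of the impulse data to convergence of the whole solution onto the periodic orbit. Together with the first step this gives the ``if'' direction and completes the equivalence.
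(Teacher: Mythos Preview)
Your proof is correct and follows essentially the same three-step strategy as the paper: a first-impulse criterion (the paper states this in one line via Lemma~\ref{lem_odes}; your function $\phi$ makes it explicit), an inductive persistence argument based on the invariance of $\Omega_{1A}$ under $g\circ\pi^-$ together with $I>0$ on $\Gamma^+\cap\Omega_{1A}$, and convergence via $V_k=(1-r)^kV_0\to0$ and the asymptotically autonomous recursion $x_{k+1}=(1-r)x_k+\mu_k$. Your version is in fact slightly more careful at one point: the paper asserts that ``the net change of $x(t)$ is positive over any time interval from $t_1$ to a time before the next impulse,'' which is stronger than what is needed and not obviously true, whereas you correctly invoke only the first-impulse criterion $x(t_k^+)+\mu_k>0$ at each step.
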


\begin{proof}
If $x(0)\le   -I(s_1(0),s_2(0))$,
then by Lemma \ref{lem_odes}
the value of $x(t)$ approaches $0$
before any impulses occur.

If $x(0)>  -I(s_1(0),s_2(0))$,
then the first impulse occurs at some finite time $t_1>0$.
The condition $(s_1(0),s_2(0))\in \Omega_{1A}$
implies that $(s_1(t_1^+),s_2(t_1^+))\in \Omega_{1A}$.
Hence, $I(s_1(t_1^+),s_2(t_1^+))>0$.
This implies  that
the net change of $x(t)$ is positive
over any time interval from $t_1$ to a time before the next impulse.
Hence, another impulse occurs at some finite time $t_2>t_1$.
Inductively, it follows that impulses occur indefinitely.
By Lemma \ref{lem_vdot}, $\lim_{t\to \infty}V(s_1(t),s_2(t))= 0$.
Hence, \[
  (s_1(t_k^-),s_1(t_k^-))
  \to (\bar{s}_1,\widehat{s}_2)
  \quad\text{and}\quad
  (s_1(t_k^+),s_1(t_k^+))
  \to (\bar{s}_1^+,\widehat{s}_2^+).
\]
Therefore, by Lemma \ref{lem_odes}
and the relation $I(\pi^+(\bar{s}_1,\widehat{s}_2))=\mu(r)$, \[
  \lim_{k\to\infty}\big( x(t_{k+1}^-)-x(t_k^+)\big)
  = \mu(r).
\] On the other hand, the impulsive map in \eqref{modeleq}
gives $\lim_{k\to \infty}x(t_{k}^+)-(1-r)x(t_k^-)= 0$.
This gives \[
  \lim_{k\to \infty}\big(x(t_{k+1}^-)-(1-r)x(t_k^-)\big)= \mu(r),
\]
which implies $\lim_{k\to \infty}x(t_{k}^-)=\frac{1}{r}\mu(r)$
and $\lim_{k\to \infty}x(t_{k}^+)=\frac{1-r}{r}\mu(r)$.
We conclude that the solution converges to the periodic orbit.
\end{proof}

\begin{corollary}\label{cor:siIN}
If $(s_1^\IN,s_2^\IN)\in\Omega_1$
and $\mu(r)>0$,
then all solutions to \eqref{modeleq}
with $(s_1(0),s_2(0))=(s_1^\IN,s_2^\IN)$ and $x(0)>0$ converge to the periodic orbit
given by Theorem \ref{thm_per_exists}.
\end{corollary}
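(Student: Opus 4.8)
The plan is to realize Corollary \ref{cor:siIN} as a direct application of Lemma \ref{lem_Omega1A} by showing that the specific starting point $(s_1^\IN,s_2^\IN)$ always lands in $\Omega_{1A}$ and always satisfies the positivity condition $x(0)>-I(s_1^\IN,s_2^\IN)$. First I would verify that $(s_1^\IN,s_2^\IN)\in \Omega_{1A}$. Since we are given $(s_1^\IN,s_2^\IN)\in \Omega_1$, by the characterization \eqref{eq_Omega1A_Vbar} it suffices to check that $V_- < V(s_1^\IN,s_2^\IN) < V_+$. But $V(s_1^\IN,s_2^\IN)=0$ by the very definition \eqref{def_V} of $V$, so I must confirm that $V_-<0<V_+$. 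Recalling that $V_-=V(\bar{s}_1,\widetilde{s}_2)=V(\bar{s}_1^+,\bar{s}_2^+)$ and tracing back the inequality chain $V(\bar{s}_1,\bar{s}_2)<V(\bar{s}_1^+,\bar{s}_2^+)<V(s_1^\IN,s_2^\IN)=0$ established just before \eqref{def_Gamma1A}, we get $V_-<0$ immediately; and since $\widehat{s}_2\in\Lambda$ with $s_{2\sharp}<\widehat{s}_2$ from Lemma \ref{lem_GammaA}, together with the monotonicity of $V$ in $s_2$, we obtain $V_+=V(\bar{s}_1,s_{2\sharp})>V(\bar{s}_1,\widehat{s}_2)=V(s_1^\IN,s_2^\IN)=0$. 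Hence $(s_1^\IN,s_2^\IN)\in\Omega_{1A}$.

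Next I would dispense with the positivity hypothesis of Lemma \ref{lem_Omega1A}, namely $x(0)>-I(s_1^\IN,s_2^\IN)$. The key observation is that the initial point sits \emph{on} the invariant line \eqref{line_invariant}, which passes through $(s_1^\IN,s_2^\IN)$ and meets $\Gamma^-$ precisely at $(\bar{s}_1,\widehat{s}_2)$; consequently the trajectory starting at $(s_1^\IN,s_2^\IN)$ follows exactly the same line segment in the $s_1$-$s_2$ plane that a periodic orbit traverses between its post-impulse state and its pre-impulse state. In fact, I expect that $\pi^+(\bar{s}_1,\widehat{s}_2)=(\bar{s}_1^+,\widehat{s}_2^+)$ lies on this same line, and that $I(s_1^\IN,s_2^\IN)$ coincides with (or is closely related to) $\mu(r)>0$ computed along the portion of the line from $(s_1^\IN,s_2^\IN)$ down to $\Gamma^-$. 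Since $\mu(r)>0$ by hypothesis, the relevant net change $I(s_1^\IN,s_2^\IN)$ is positive, so $-I(s_1^\IN,s_2^\IN)<0<x(0)$ holds automatically for any positive $x(0)$. This is where care is needed: I must confirm that the sign of $I(s_1^\IN,s_2^\IN)$ is governed by the same integrand positivity that makes $\mu(r)>0$, using that $(s_1^\IN,s_2^\IN)$ lies above $\Gamma^-$ on the invariant level set $V=0$.

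The main obstacle I anticipate is the bookkeeping around exactly which branch of the piecewise definition of $I$ applies at $(s_1^\IN,s_2^\IN)$ and verifying that the resulting integral has a positive integrand. Because $(s_1^\IN,s_2^\IN)$ lies on the line $V=0$ and we have assumed via \eqref{ineq_sbarR21} that $(\bar{s}_1,\bar{s}_2)$ lies on or above $L$, one must check that $V(s_1^\IN,s_2^\IN)=0\ge V(\bar{s}_1,\bar{s}_2)$ selects the appropriate case and that the limits of integration run over values of $v$ where $f_1$ and $f_2$ exceed $D$ by the same monotonicity argument used in the proposition following Theorem \ref{thm_per_exists}. Once the branch is fixed and the integrand shown positive, I would invoke Lemma \ref{lem_Omega1A} directly: with $(s_1(0),s_2(0))\in\Omega_{1A}$ and $x(0)>-I(s_1(0),s_2(0))$ both verified for every $x(0)>0$, the lemma yields convergence to the unique periodic orbit of Theorem \ref{thm_per_exists}, completing the proof.
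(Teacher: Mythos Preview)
Your plan is correct and matches the paper's proof: verify $(s_1^\IN,s_2^\IN)\in\Omega_{1A}$ via the $V$-characterization, show $I(s_1^\IN,s_2^\IN)>0$, and invoke Lemma~\ref{lem_Omega1A}. The paper resolves your integrand worry in one stroke by observing $I(s_1^\IN,s_2^\IN)>I(\pi^+(\bar{s}_1,\widehat{s}_2))=\mu(r)>0$, since both integrals run along the same line $V=0$ with the same (monotone increasing) integrand and $s_1^\IN>\bar{s}_1^+$, so you need not check sign of the integrand pointwise.
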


\begin{proof}
By the definitions of   $\Gamma_A^-$ and  $\Omega_{1A}$ given
in \eqref{def_Gamma1A} and \eqref{def_Omega1A}, respectively, 
$(s_1^\IN,s_2^\IN)\in \Omega_{1A}$.
Since $I(s_1^\IN,s_2^\IN)>I(\pi^+(\bar{s}_1,\widehat{s}_2))=\mu(r)>0$,
the desired result follows from Lemma \ref{lem_Omega1A}.
\end{proof}

For each $(s_1^0,s_2^0)\in \Omega_1$,
let $N_0=N_0(s_1^0,s_2^0)$ be the smallest positive integer
such that \begin{equation}
  (g\circ \pi^-)^{N_0}(s_1^0,s_2^0)\in \Omega_{1A}.
  \label{def_N0}
\end{equation} 
In particular, $N_0(s_1^0,s_2^0)=1$ for all $(s_1^0,s_2^0)\in \Omega_{1A}$.
For $(s_1^0,s_2^0)\in \Omega_1\setminus\Omega_{1A}$,
by the identities 
$V(g(s_1,s_2))= 
(1-r)
V(s_1,s_2)$ and $V(\pi^-(s_1,s_2))=V(s_1,s_2)$,
\begin{equation}
  V\big((g\circ \pi^-)^{n}(s_1^0,s_2^0)\big)
  = (1-r)^n
  V(s_1^0,s_2^0),
  \quad n=1,2,\cdots.
  \label{eq_Vgn}
\end{equation}
If $V(s_1^0,s_2^0)<0$, then by \eqref{eq_Omega1A_Vbar},
the condition $(s_1^0,s_2^0)\notin \Omega_{1A}$
is equivalent to $V(s_1^0,s_2^0)\le V_-$.
Thus,  by \eqref{eq_Vgn},  condition
\eqref{def_N0}, is equivalent to \[
  (1-r)^{N_0}V(s_1^0,s_2^0)> V_-.
\]
Similarly, if $V(s_1^0,s_2^0)>0$ and $(s_1(0),s_2(0))\in \Omega_{1A}$,
then   condition \eqref{def_N0} is equivalent to \[
  (1-r)^{N_0}V(s_1^0,s_2^0)< V_+.
\] 
Hence, 
\begin{equation}
  N_0(s_1^0,s_2^0)
  = \begin{cases}
    \left \lceil \frac{\ln(V(s_1^0,s_2^0)/V_-)}{-\ln(1-r)} \right\rceil,
    &\text{if}\;\; V(s_1^0,s_2^0)<V_-,\\[.5em]
    \left \lceil \frac{\ln(V(s_1^0,s_2^0)/V_+)}{-\ln(1-r)} \right\rceil,
    &\text{if}\;\; V(s_1^0,s_2^0)>V_+,
  \end{cases}
  \label{eq_N0}
\end{equation}
where $\lceil y\rceil$ is the least integer greater than or equal to $y$.

For any solution $(s_1(t),s_2(t),x(t))$ of \eqref{modeleq}
with  $(s_1(0),s_2(0))=(s_1^0,s_2^0)\in\Omega_{1}$, and
$x(0)>0$,
$$x(t_1^-)=x(0)+I(s_1^0,s_2^0).$$
Note that $x(t_k^+)= (1-r)x(t_k^-)$ by the impulsive map in \eqref{modeleq},
and $x(t_{k+1}^-)= x(t_k^+)+ I(s_1(t_k^+),s_2(t_k^+))$ by Lemma \eqref{lem_odes}.
Hence, for any $n=2,3,\dots,$
the left limit of $x(t)$
at the $n$th impulse, if it exists, equals \[
  x(t_n^-)
  = (1-r)x(t_{n-1}^-)+ I((g\circ\pi^-)^{n-1}(s_1^0,s_2^0)),
\]
where $t_0=0$, and $t_k$, $k\ge 1$, is the $k$th impulse time.
By induction, \[
  x(t_n^-)
  =(1-r)^{n-1}x(0)
  + \sum_{k=1}^{n} (1-r)^{n-k}I((g\circ\pi^-)^{k-1}(s_1^0,s_2^0)).
\]
Thus the condition $x(t_n^-)>0$ is equivalent to \[
  x(0)> -\sum_{k=1}^{n} (1-r)^{-(k-1)}I((g\circ\pi^-)^{k-1}(s_1^0,s_2^0)).
\]
We define  $X(s_1^0,s_2^0)$ to be 
the least value so that if  $x(0)>X(s_1^0,s_2^0)$
then $(s_1(t_*^-),s_2(t_*^-))\in \Gamma^-_A$ for some $t_*>0$.
Hence, 
\begin{equation}
  X(s_1^0,s_2^0)
  = -\left(
  \min_{1\le n\le N_0(s_0,s_1)}
  \sum_{k=1}^n
  (1-r)^{-(k-1)}I\Big( (g\circ \pi^-)^{k-1}(s_1^0,s_2^0) \Big)
  \right).
  \label{eq_X}
\end{equation}
In particular,
\[
  X(s_1^0,s_2^0)= 
  -I(s_1^0,s_2^0)
  \quad\text{if}\; \  N_0(s_1^0,s_2^0)= 1.
\]

The following proposition extends Lemma \ref{lem_Omega1A}.

\begin{proposition}
\label{prop_Omega1B}
Assume $(s_1^\IN,s_2^\IN)\in \Omega_1$
and $\mu(r)>0$.
Let $(s_1(t),s_2(t),x(t))$ be a  solution of \eqref{odes}
with $(s_1(0),s_2(0))\in \Omega_1$  and
$x(0)>0$.
\begin{enumerate}
\item[(i)]
If $x(0)\le X(s_1(0),s_2(0))$, then there are at most $N_0(s_1^0,s_2^0)-1$ impulses.
\item[(ii)]  If $x(0)> X(s_1(0),s_2(0))$, then the solution converges to
the unique periodic orbit given by Theorem \ref{thm_per_exists}.
\end{enumerate}
\end{proposition}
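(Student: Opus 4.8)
The plan is to collapse the full impulsive dynamics onto the scalar recursion for the left limits $x(t_n^-)$ that is derived just above the statement, coupled with a segment-wise reachability criterion. Write $p_k=(g\circ\pi^-)^k(s_1^0,s_2^0)$ for $k\ge 0$, so $p_0=(s_1^0,s_2^0)$ and, for $k\ge 1$, $p_k\in\Gamma^+$ is the position in the $s_1$-$s_2$ plane immediately after the $k$th impulse. The atomic fact I need is: for a solution starting from $(s_1^0,s_2^0,x^0)\in\Omega_1\times(0,\infty)$ with no impulse at the start, the trajectory reaches $\Gamma^-$ in finite time (triggering the next impulse) if and only if $x^0>-I(s_1^0,s_2^0)$, and then the value of $x$ at that impulse equals $x^0+I(s_1^0,s_2^0)>0$; otherwise $x(t)\to 0$ and no impulse occurs. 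This is precisely the dichotomy in the first two paragraphs of the proof of Lemma \ref{lem_Omega1A}, whose argument uses only Lemma \ref{lem_odes} and Proposition \ref{prop_positivity} and therefore applies to any starting point of $\Omega_1$, not merely those in $\Omega_{1A}$.

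Applying this segment by segment, together with the recursion $x(t_n^-)=(1-r)x(t_{n-1}^-)+I(p_{n-1})$ and the impulse map $x(t_k^+)=(1-r)x(t_k^-)$, I would first show by induction on $n$ that impulses $1,\dots,n$ all occur if and only if $x(t_j^-)>0$ for every $j\le n$. Dividing the closed form
\[
  x(t_n^-)=(1-r)^{n-1}x(0)+\sum_{k=1}^n(1-r)^{n-k}I(p_{k-1})
\]
by $(1-r)^{n-1}>0$ converts each inequality $x(t_j^-)>0$ into $x(0)>-\sum_{k=1}^j(1-r)^{-(k-1)}I(p_{k-1})$. Taking the most binding of these over $1\le n\le N_0$ and recalling definition \eqref{eq_X}, the assertion ``all of the first $N_0$ impulses occur'' is exactly $x(0)>X(s_1^0,s_2^0)$; here $N_0=N_0(s_1^0,s_2^0)$ is finite by \eqref{eq_N0}, which rests on the geometric decay \eqref{eq_Vgn} of $V$ under $g\circ\pi^-$, so $X$ is well defined. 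For part (i), suppose $x(0)\le X$. Then $\min_{1\le n\le N_0}\sum_{k=1}^n(1-r)^{-(k-1)}I(p_{k-1})\le -x(0)$, so there is a smallest index $n^*\le N_0$ with $x(t_{n^*}^-)\le 0$. By minimality $x(t_j^-)>0$ for $j<n^*$, so impulses $1,\dots,n^*-1$ occur and the closed form is valid up to $n^*$; since $x(t_{n^*}^-)\le 0$, the segment-wise criterion shows the $n^*$th segment never reaches $\Gamma^-$ (its $x$-component tends to $0$ by Proposition \ref{prop_positivity}), so the $n^*$th impulse does not occur and there are exactly $n^*-1\le N_0-1$ impulses.

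For part (ii), suppose $x(0)>X$. Since $X\ge-\sum_{k=1}^n(1-r)^{-(k-1)}I(p_{k-1})$ for every $n\le N_0$, we get $x(t_n^-)>0$ for all $n\le N_0$, so impulses $1,\dots,N_0$ all occur and the post-impulse state at the $N_0$th impulse is $(p_{N_0},x(t_{N_0}^+))$ with $x(t_{N_0}^+)=(1-r)x(t_{N_0}^-)>0$ and $p_{N_0}\in\Omega_{1A}$ by \eqref{def_N0}. Because $p_{N_0}\in\Gamma^+$, the identity $\pi^+(\pi^-(p_{N_0}))=p_{N_0}$ and the defining condition of $\Gamma^-_A$ give $I(p_{N_0})>0$, hence $x(t_{N_0}^+)>0>-I(p_{N_0})$. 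Restarting the (autonomous) solution at the $N_0$th impulse and invoking the ``if'' direction of Lemma \ref{lem_Omega1A}, which applies precisely because $x(t_{N_0}^+)>-I(p_{N_0})$, the tail of the solution converges to the unique periodic orbit of Theorem \ref{thm_per_exists}, and convergence of a tail is convergence of the whole solution.

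The step I expect to cost the most care is the segment-wise reachability criterion and its coupling to the minimum in \eqref{eq_X}: one must argue that $x$ is unimodal along each segment (increasing while $\min\{f_1,f_2\}>D$, decreasing thereafter), so that positivity of $x$ at the two endpoints forces positivity throughout and ``$x(t_n^-)>0$'' becomes the exact condition for the $n$th impulse; and one must run the induction that keeps the closed form for $x(t_n^-)$ valid only along trajectories for which the earlier impulses have genuinely occurred. Everything else reduces to algebra on the affine recursion and direct appeals to Lemmas \ref{lem_odes} and \ref{lem_Omega1A}.
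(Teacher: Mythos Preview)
Your proposal is correct and follows essentially the same route as the paper: reduce to the affine recursion for $x(t_n^-)$, use the definition \eqref{eq_X} of $X$ to decide whether the first $N_0$ impulses all fire, and then hand off to Lemma~\ref{lem_Omega1A} once the orbit lands in $\Omega_{1A}$. The paper's proof of (i) is a two-line contradiction (assume $N_0$ impulses and observe that some $x(t_k^-)\le 0$), while you locate the first failing index $n^*$ explicitly; and your part (ii) is verbatim the paper's argument. Your added discussion of the segment-wise reachability criterion and the unimodality of $x$ along each segment (via monotonicity of $\min\{f_1,f_2\}$ along lines of slope $R_{21}$) fills in a detail the paper leaves implicit in both Lemma~\ref{lem_Omega1A} and here, but it does not change the structure of the argument.
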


\begin{proof}
(i) Suppose $x(0)\le X(s_1(0),s_2(0))$
and the solution has at least $N_0=N_0(s_1^0,s_2^0)$ impulses.
Denote the first $N_0$  impulse times by $t_1<t_2<\cdots<t_{N_0}$.
Then, by Lemma \ref{lem_odes} and the definition of $X(s_1,s_2)$,
for some $k\in \{1,\cdots,N_0\}$, \[
  x(t_k^-)= x(0)- X(s_1(0),s_2(0))\le 0,
\] contradicting   the positivity of the solution.

(ii) If $x(0)> X(s_1(0),s_2(0))$,
then the solution has at least $N_0$ impulses.
Denote the $N_0$th impulse time by $t_{N_0}$.
Then, \[
  (s_1(t_{N_0}^+),s_2(t_{N_0}^+))
  = (g\circ \pi^-)^{N_0}(s_1(0),s_2(0))\in \Omega_{1A}.
\] Since $(s_1(t_{N_0}^+),s_2(t_{N_0}^+))\in \Gamma^+$,
by the definition of $\Omega_{1A}$,  $I(s_1(t_{N_0}^+),s_2(t_{N_0}^+))>0$.
Hence, the result follows from
Lemma \ref{lem_Omega1A}.
\end{proof}

\begin{example}
Consider \eqref{modeleq} with the Monod functional responses
$f_i(s_i)=\frac{m_is_i}{a_i+s_i}$, $i=1,2$,
and parameters 
$(m_1,m_2,a_1,a_2)=(2,2,1.9,0.3)$,
$(Y_1,Y_2,D)=(4,1.9,0.5)$,
and
$(\bar{s}_1,\bar{s}_2,s_1^\IN,s_2^\IN,r)=(0.6,0.5,1,1,0.4)$.

We compute the following quantities using their definition.
\[
  (\bar{s}_1^+,\bar{s}_2^+)
  = (0.76,0.7),\;
  \widetilde{s}_2\approx 0.36,\;
  \widehat{s}_2\approx 0.16,\;
  \mu(r)\approx 0.03,\;
  V_-\approx -0.39.
\]
Taking the initial values $(s_1^0,s_2^0)=(0.23,0.6)$,  we have
$V(s_1^0,s_2^0)=-2.32$.
Then, \[
  N_0(s_1^0,s_2^0)
  = 
  \left\lceil
  \frac{\ln(V(s_1^0,s_2^0)/V_-)}{-\ln(1-r)}
  \right\rceil
  =
  \left\lceil
  3.4908
  \right\rceil
  = 4.
\]
The approximated values of $I((g\circ \pi^-)^n(s_1^0,s_2^0))$, $1\le n<N_0$,
are as follows.
\[\begin{array}{c|c|c|c|c}
  n& 0& 1& 2& 3
  \\
  \hline
  (1-r)^{-n}I((g\circ \pi^-)^n(s_1^0,s_2^0))
  & -0.2970
  & -0.1785
  & -0.0441
  & 0.0846
\end{array}\]
Therefore $X(s_1^0,s_2^0)\approx 0.2970+0.1785+0.0441= 0.5196$.

In Figures~\ref{fig_failed} and \ref{fig_cycling}, 
the initial data satisfies $x(0)=0.5< X(s_1^0,s_2^0)$
and $x(0)=0.53> X(s_1^0,s_2^0)$, respectively.
By Proposition \ref{prop_Omega1B}
the fermentation succeeds only in the latter case.
\label{ex_N0}
\end{example}

If $(s_1^\IN,s_2^\IN)\in \Omega_1$ 
and $\mu(r)\le 0$,
then, by Theorem \ref{thm_per_exists},
system \eqref{modeleq} has no periodic solution.
The following proposition
asserts that the fermentation fails in this case.

\begin{proposition}
Assume $(s_1^\IN,s_2^\IN)\in \Omega_1$.
\begin{enumerate}
\item[(i)]
If $\mu(r)< 0$,
then for every solution of \eqref{modeleq}
with positive initial conditions,
only finitely many impulses occur.
\item[(ii)]
If $\mu(r)= 0$,
then for every solution of \eqref{modeleq}
with positive initial conditions,
either only finitely many impulses occur,
or the time between impulses tends to infinity.
\end{enumerate}
\label{prop_I0}
\end{proposition}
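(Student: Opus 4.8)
The plan is to handle both parts by a single preliminary reduction: assume the solution has infinitely many impulse times $t_1<t_2<\cdots$, extract the limiting behaviour of the biomass at the impulses, and then derive the stated conclusions. By Lemma~\ref{lem_vdot}, $V(s_1(t_k^-),s_2(t_k^-))=(1-r)^{k-1}V(s_1(t_1^-),s_2(t_1^-))\to 0$. Now $V$ is continuous and strictly monotone along the arc $\Gamma^-$ (it increases in $s_1$ on the horizontal piece and increases as $s_2$ decreases on the vertical piece), so its only zero on $\Gamma^-$ is $(\bar{s}_1,\widehat{s}_2)$, which lies in the interior of the vertical piece because $0<\widehat{s}_2\le\bar{s}_2$ follows from $(s_1^\IN,s_2^\IN)\in\Omega_1$ together with \eqref{ineq_sbarR21}. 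Hence every subsequential limit of $u_k^-$ is a zero of $V$ on the compact set $\Gamma^-$, so $u_k^-\to(\bar{s}_1,\widehat{s}_2)$ and, by \eqref{eq_ukplus}, $u_k^+=g(u_k^-)\to(\bar{s}_1^+,\widehat{s}_2^+)=\pi^+(\bar{s}_1,\widehat{s}_2)$. Since the net-change map $I$ is continuous near $\pi^+(\bar{s}_1,\widehat{s}_2)$ (the connecting trajectory stays in $s_1\ge\bar{s}_1>0$, $s_2\ge\widehat{s}_2>0$, so the integrand in $I$ stays bounded) and $I(\pi^+(\bar{s}_1,\widehat{s}_2))=\mu(r)$, I would conclude $I(u_k^+)\to\mu(r)$. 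Combining the net-change formula of Lemma~\ref{lem_odes} with the impulsive map gives the scalar recursion
\[
  x(t_{k+1}^-)=(1-r)\,x(t_k^-)+I(u_k^+).
\]

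For part~(i), fix $\varepsilon>0$ with $\mu(r)+\varepsilon<0$ and choose $K$ so that $I(u_k^+)<\mu(r)+\varepsilon$ for $k\ge K$. Writing $a_k=x(t_k^-)$ and comparing with the affine iteration $b_{k+1}=(1-r)b_k+(\mu(r)+\varepsilon)$, $b_K=a_K$, a one-line induction using $0<1-r<1$ gives $a_k\le b_k$ for $k\ge K$. Since $b_k\to(\mu(r)+\varepsilon)/r<0$, we would have $a_k<0$ for large $k$, contradicting the positivity of the solution. Hence only finitely many impulses occur.

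For part~(ii), the recursion with $I(u_k^+)\to\mu(r)=0$ and $0<1-r<1$ forces $x(t_k^-)\to 0$, hence $x(t_k^+)=(1-r)x(t_k^-)\to 0$. If there are only finitely many impulses we are in the first alternative, so assume infinitely many and suppose, for contradiction, that $t_{k+1}-t_k\not\to\infty$. Then there exist $T<\infty$ and a subsequence $k_j$ with $t_{k_j+1}-t_{k_j}\le T$. On each such cycle, integrating the $x$-equation gives $x(t)\le x(t_{k_j}^+)\exp\big((M_1-D)^{+}T\big)=:C\,x(t_{k_j}^+)$ for $t\in[t_{k_j},t_{k_j+1}]$, where $M_1$ is the upper bound for $\min\{f_1,f_2\}$ furnished by the no-beating argument and $(M_1-D)^{+}=\max\{M_1-D,0\}$. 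Integrating the $s_1$-equation then bounds the substrate processed on that cycle, using the uniform lower bound $m>0$ for the substrate drop per cycle:
\[
  m\le s_1(t_{k_j}^+)-s_1(t_{k_j+1}^-)
  =\tfrac{1}{Y_1}\int_{t_{k_j}}^{t_{k_j+1}}\min\{f_1,f_2\}\,x\,dt
  \le \tfrac{M_1 C T}{Y_1}\,x(t_{k_j}^+).
\]
Letting $j\to\infty$ drives the right-hand side to $0$, contradicting $m>0$. Therefore $t_{k+1}-t_k\to\infty$.

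The main obstacle is the shared preliminary step: rigorously identifying $\lim_k I(u_k^+)=\mu(r)$, which rests on the continuity of the first-integral map $I$ at $\pi^+(\bar{s}_1,\widehat{s}_2)$ and on the convergence $u_k^-\to(\bar{s}_1,\widehat{s}_2)$ obtained from the monotonicity of $V$ on $\Gamma^-$. A secondary subtlety, special to part~(ii), is that the peak biomass within a cycle need \emph{not} tend to $0$ even though $x(t_k^\pm)\to 0$ (the additive gain $dx/ds_1$ is independent of $x$), so one cannot simply bound $x$ uniformly; the consumption-versus-time estimate above sidesteps this by bounding $x$ throughout a hypothetically short cycle by $C\,x(t_k^+)$ and comparing the processed substrate with the fixed lower bound $m$.
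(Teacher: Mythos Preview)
Your proof is correct. The preliminary reduction (convergence $u_k^\pm\to(\bar{s}_1,\widehat{s}_2),\ (\bar{s}_1^+,\widehat{s}_2^+)$ via $V(t_k^-)\to 0$ and uniqueness of the zero of $V$ on $\Gamma^-$) and part~(i) match the paper's argument: both pass from the recursion $x(t_{k+1}^-)=(1-r)\,x(t_k^-)+I(u_k^+)$ with $I(u_k^+)\to\mu(r)$ to the conclusion $x(t_k^-)\to\mu(r)/r<0$, contradicting positivity. Your affine comparison is simply a more explicit rendering of that limit.

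Part~(ii) takes a genuinely different route. After establishing $x(t_k^\pm)\to 0$, the paper asserts that the cycle trajectory approaches the ``heteroclinic orbit'' of \eqref{odes} from $(\bar{s}_1^+,\widehat{s}_2^+,0)$ to $(\bar{s}_1,\widehat{s}_2,0)$ and concludes $t_{k+1}-t_k\to\infty$. This is conceptually tidy but terse: the plane $\{x=0\}$ consists entirely of equilibria, so what is really being invoked is that a trajectory starting arbitrarily close to an equilibrium takes arbitrarily long to move a fixed distance, a continuous-dependence fact the paper does not spell out. Your argument makes this quantitative and self-contained: assuming a bounded subsequence of cycle lengths $\le T$, a Gronwall bound gives $x(t)\le C\,x(t_{k_j}^+)$ on those cycles, and then the substrate consumed on such a cycle is at most $\tfrac{M_1CT}{Y_1}\,x(t_{k_j}^+)\to 0$, contradicting the uniform lower bound $m$ on the $s_1$-drop per cycle from the no-beating proposition. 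Your observation that the \emph{peak} biomass within a cycle need not vanish (since $dx/ds_1$ is $x$-independent) is apt and explains why a naive ``$x\to 0$ uniformly'' argument would fail; restricting the Gronwall bound to the hypothetically short cycles is exactly the right fix. The trade-off: the paper's version is a one-line appeal to a dynamical-systems picture, while yours is a short explicit estimate that stays entirely within the tools already developed in the paper.
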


\begin{proof}
Let $(s_1(t),s_2(t),x(t))$ be a solution of \eqref{modeleq}
with positive initial conditions.
Suppose the solution has infinitely many impulses.
Denote the impulse times by $t_1<t_2<\cdots$.
Then by Lemma \ref{lem_vdot},
$(s_1(t_k^-),s_2(t_k^-))\to (\bar{s}_1,\widehat{s}_2)$
and 
$(s_1(t_k^+),s_2(t_k^+))\to (\bar{s}_1^+,\widehat{s}_2^+)$
as $k\to \infty$.

(i) In the case $\mu(r)< 0$,
by Lemma \ref{lem_odes}
and the relation $I(\pi^+(\bar{s}_1,\widehat{s}_2))=\mu(r)$, \[
  \lim_{k\to\infty}\big( x(t_{k+1}^-)- x(t_k^+)\big)
  = \mu(r).
\] 
(ii) On the other hand, the impulsive map in \eqref{modeleq}
gives $x(t_{k}^+)= (1-r)x(t_k^-)$.
This implies $\lim_{k\to\infty}x(t_k^-)=\frac{1}{r}\mu(r)<0$,
contradicting to the positivity of the solution.

In the case $\mu(r)= 0$,
by Lemma \ref{lem_odes}
and the relation $I(\pi^+(\bar{s}_1,\widehat{s}_2))=\mu(r)=0$, \[
  \lim_{k\to \infty}\big(x(t_{k+1}^-)-x(t_k^+)\big)
  = 0.
\] 
By the relation $x(t_{k}^+)= (1-r)x(t_k^-)$,
it follows that $\lim_{k\to \infty}x(t_k^\pm)=0$.
Hence, the trajectory of $(s_1(t),s_2(t),x(t))$,
$t\in (t_{k},t_{k+1})$,
approaches the heteroclinic orbit of \eqref{odes}
from $(\bar{s}_1^+,\widehat{s}_2^+,0)$
to $(\bar{s}_1,\widehat{s}_2,0)$.
This implies $\lim_{k\to \infty}\big(t_{k+1}-t_k\big)= \infty$.
\end{proof}

By \eqref{eq_N0}, the function $N_0(s_1,s_2)$
of $(s_1,s_2)\in \Omega_1$
has an upper bound \begin{equation*}
  \bar{N}= \max\{N_0(0,\bar{s}_2),N_0(\bar{s}_1,0)\}.
\end{equation*}
We summarize our results as follows.

\begin{theorem}\label{Theorem_summary}
Consider system \eqref{modeleq}.
\begin{enumerate}[(i)]
\item
If $(s_1^\IN,s_2^\IN)\in \Omega_0$,
then every solution
has at most finitely many impulses.
\item
If $(s_1^\IN,s_2^\IN)\in \Omega_1$
and $\mu(r)\le 0$, 
then the fermentation fails in the sense
that for every solution
with positive initial conditions,
either only finitely many impulses occur,
or the time between impulses tends to infinity.
\item
If $(s_1^\IN,s_2^\IN)\in \Omega_1$ 
and $\mu(r)>0$,
then there is a unique periodic orbit.
Moreover, for any  solution
$(s_1(t),s_2(t),x(t))$, with positive initial conditions, 
the number of impulse times is either infinite
or is less than $\bar{N}$.
The case with infinitely many impulses occurs if and only if \[
  (s_1(0),s_2(0))\in \Omega_1
  \quad\text{and}\quad
  x(0)> X(s_1(0),s_2(0)).
\]
\end{enumerate}
\label{thm_X}
\end{theorem}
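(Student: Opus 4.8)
The plan is to read off the three cases from results already in hand, since Theorem~\ref{Theorem_summary} merely partitions the space of configurations according to whether $(s_1^\IN,s_2^\IN)$ lies in $\Omega_0$ or $\Omega_1$, and, in the latter case, according to the sign of $\mu(r)$. Parts (i) and (ii) require no new work: part (i) is precisely Lemma~\ref{lem_Omega0_sIN}, and part (ii) is precisely Proposition~\ref{prop_I0}, whose case (i) disposes of $\mu(r)<0$ and whose case (ii) disposes of $\mu(r)=0$. I would simply cite these.

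For part (iii) I would first invoke Theorem~\ref{thm_per_exists} for the existence and uniqueness of the periodic orbit. For the impulse count and the stated equivalence, I would work under the standing assumption of Section~\ref{sec_full} that $(s_1(0),s_2(0))$ lies above or to the right of $\Gamma^-$, the complementary between-axes data having already been reduced to this case by the finitely many immediate impulses discussed at the start of the section. Under this assumption the admissible region splits into $\Omega_0$, $\Omega_1$, and the two exceptional lines of slope $R_{21}$ through $(\bar{s}_1,0)$ and $(0,\bar{s}_2)$, and I would treat these three pieces in turn. If $(s_1(0),s_2(0))\in\Omega_0$, or lies on one of the exceptional lines with both coordinates positive, then by Lemma~\ref{lem_Omega0} and the marginal-case remark preceding it, no impulse ever occurs; the count is zero, hence finite and below $\bar{N}$, and this is not the equivalence condition. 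If $(s_1(0),s_2(0))\in\Omega_1$, then Proposition~\ref{prop_Omega1B} yields the dichotomy: when $x(0)\le X(s_1(0),s_2(0))$ there are at most $N_0(s_1(0),s_2(0))-1$ impulses, and when $x(0)> X(s_1(0),s_2(0))$ the solution converges to the unique periodic orbit and hence has infinitely many impulses.

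To turn the finite bound $N_0-1$ into the uniform bound $\bar{N}$ I would use that $N_0(s_1,s_2)\le\bar{N}$ for every $(s_1,s_2)\in\Omega_1$; this is recorded just before the theorem and follows from the closed form \eqref{eq_N0} together with the monotonicity of $V$ on each branch of $\Omega_1$, the extreme values being $V(0,\bar{s}_2)$ and $V(\bar{s}_1,0)$. Collecting the three pieces shows that infinitely many impulses occur exactly for data with $(s_1(0),s_2(0))\in\Omega_1$ and $x(0)> X(s_1(0),s_2(0))$, which is the asserted equivalence: the backward direction is Proposition~\ref{prop_Omega1B}(ii), and the forward direction is its contrapositive applied across the three pieces. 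The only steps beyond citation are the verification of the uniform bound $N_0\le\bar{N}$ (monotonicity of the ceiling expression in $|V|$ and identification of the extreme values of $V$) and the clean bookkeeping of the degenerate initial data, namely the exceptional slope-$R_{21}$ lines and the between-axes region. I expect this boundary bookkeeping, rather than any substantive estimate, to be the main obstacle to making the equivalence literally correct under the section's standing assumption.
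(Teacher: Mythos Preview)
Your proposal is correct and follows essentially the same approach as the paper: the paper's own proof is the single sentence ``The theorem follows from Lemmas~\ref{lem_Omega0} and~\ref{lem_Omega0_sIN}, and Propositions~\ref{prop_Omega1B} and~\ref{prop_I0},'' and you have simply unpacked how these results assemble into the three cases, adding the explicit citation of Theorem~\ref{thm_per_exists} for the periodic orbit and the verification that $N_0\le\bar{N}$ gives the uniform bound. Your extra care with the boundary bookkeeping (the exceptional slope-$R_{21}$ lines and the between-axes region) goes a bit beyond what the paper makes explicit, but it is consistent with the standing conventions of Section~\ref{sec_full} and does not change the argument.
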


\begin{proof}
The theorem follows from
Lemmas \ref{lem_Omega0} and \ref{lem_Omega0_sIN},
and Propositions \ref{prop_Omega1B} and \ref{prop_I0}.
\end{proof}

\begin{figure}[htbp]
\centering
{\includegraphics[trim = 1cm 0cm 1cm 0cm, clip, width=.49\textwidth]{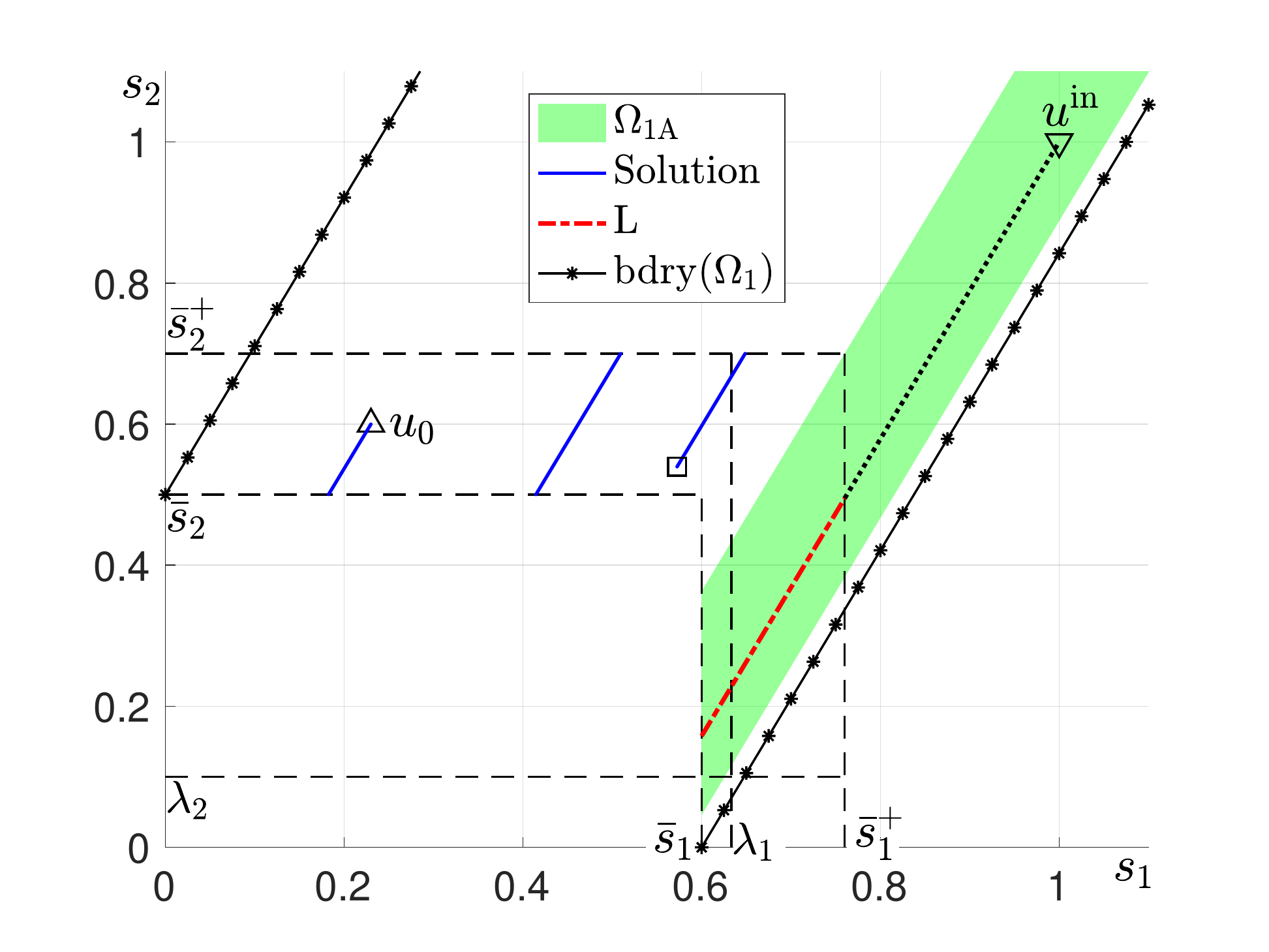}}
{\includegraphics[trim = 1cm 0cm 1cm 0cm, clip, width=.49\textwidth]{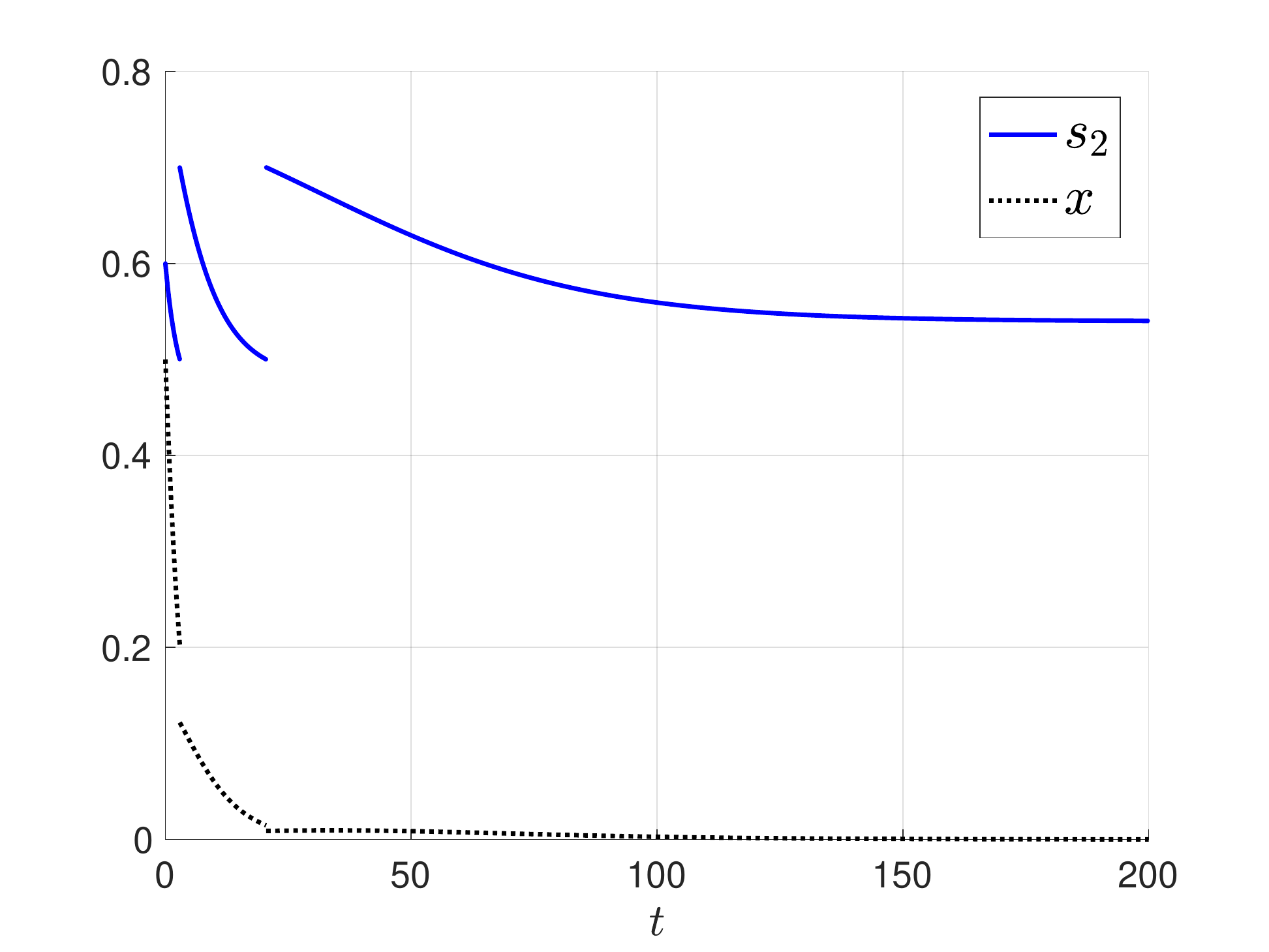}}
\caption{%
If $x(0)\le X(s_1(0),s_2(0))$,
then only finitely many impulses occur.
The orbit converges, indicated by  $\square$,
after a finite number of impulses,
and the $x$-component of the solution tends to $0$ as $t\to\infty$.
The parameters are
the values given in Example \ref{ex_N0},
and the initial condition is $(s_1(0),s_2(0),x(0))=(0.23,0.6,0.5)$.}
\label{fig_failed}
\end{figure}
\begin{figure}[htbp]
{\includegraphics[trim = 1cm 0cm 1cm 0cm, clip, width=.49\textwidth]{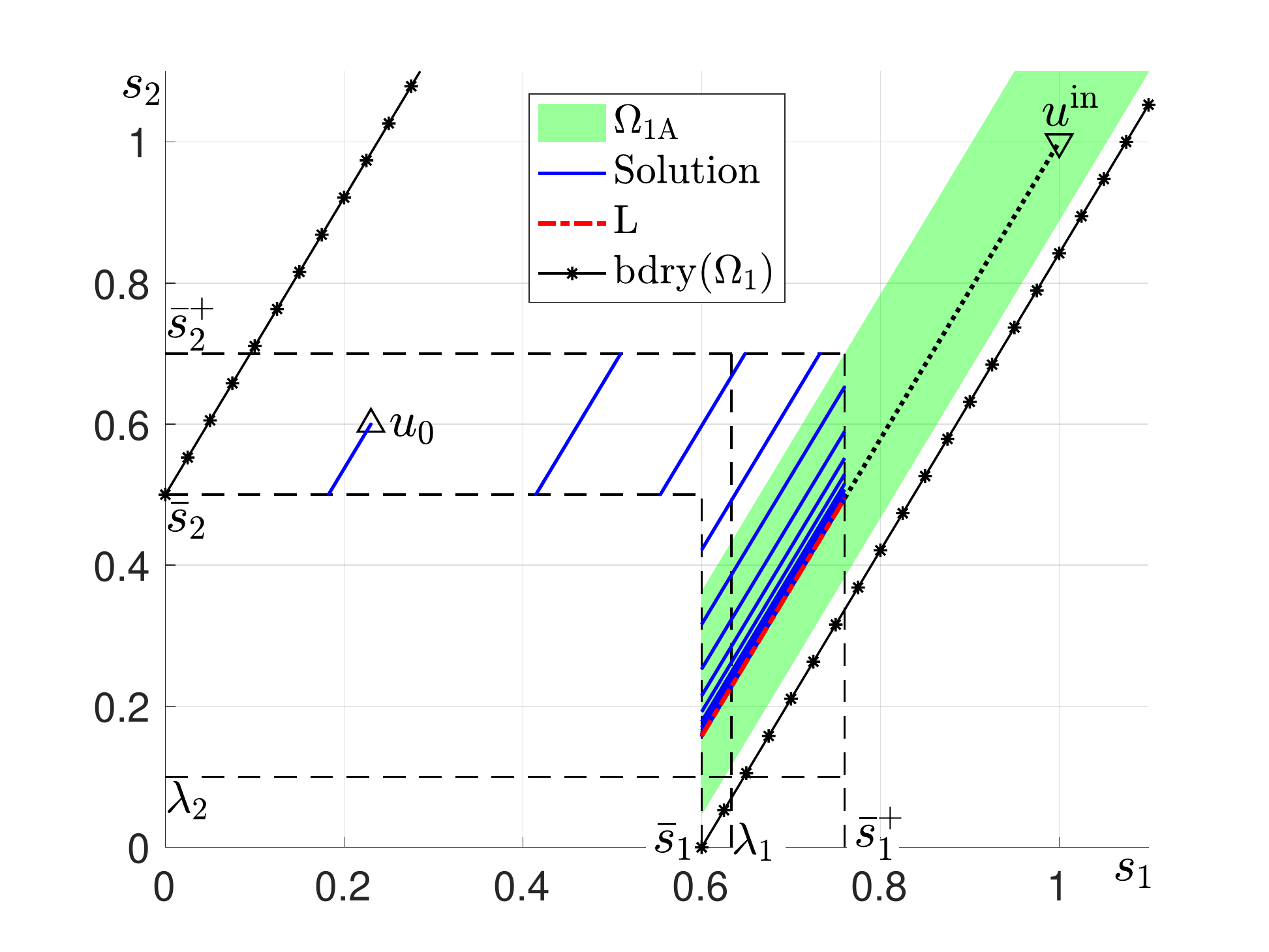}}
{\includegraphics[trim = 1cm 0cm 1cm 0cm, clip, width=.49\textwidth]{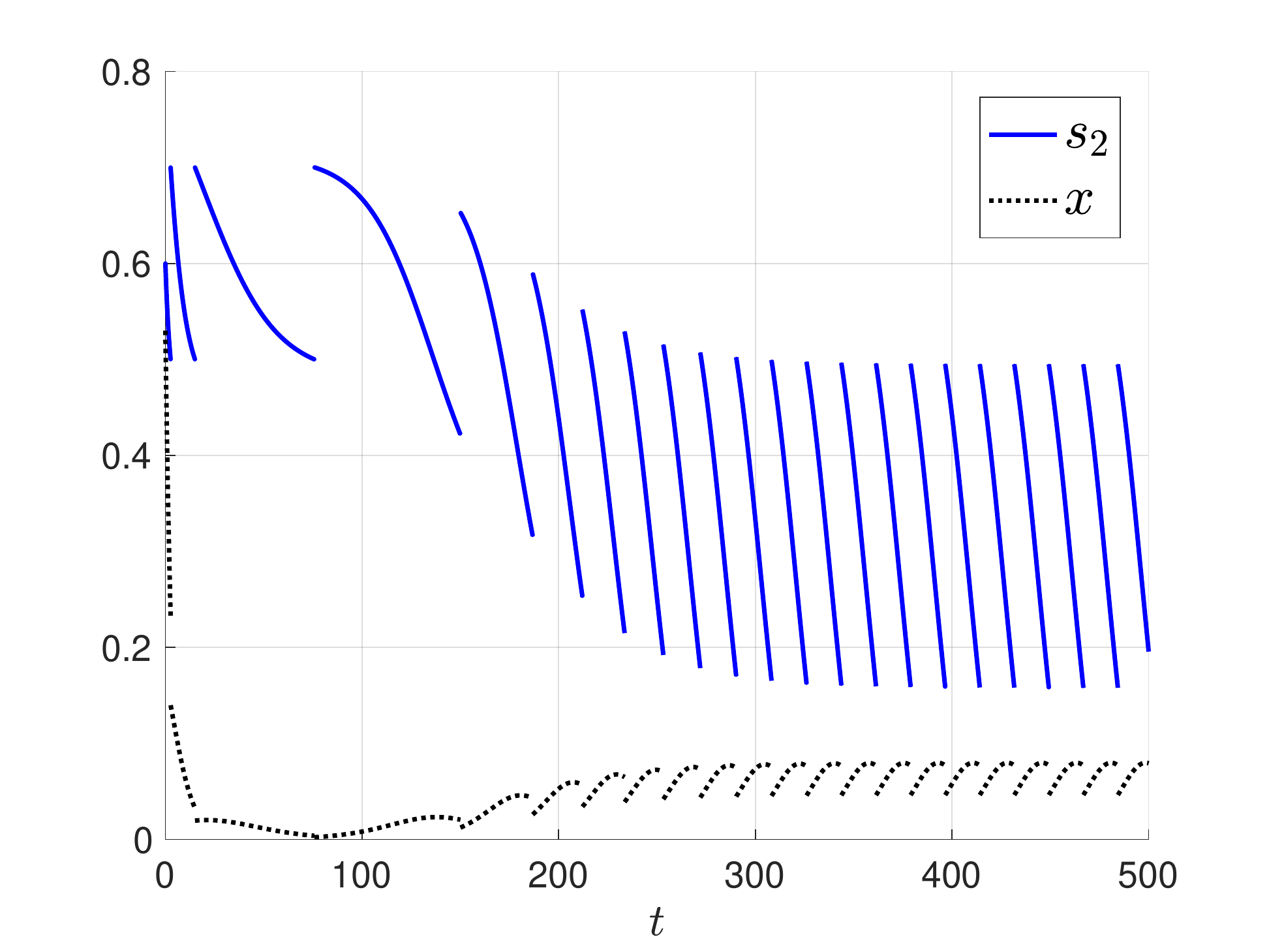}}
\caption{%
If $L$ lies in $\Omega_{1A}$
and $x(0)> X(s_1(0),s_2(0))$,
then the solution converges to the periodic orbit.
The parameters are
the values given in Example \ref{ex_N0},
and the initial condition is $(s_1(0),s_2(0),x(0))=(0.23,0.6,0.53)$.
}
\label{fig_cycling}
\end{figure}

In the following Corollary, we consider a case in which we are guaranteed that 
 $X(s_1^0,s_2^0)\le 0$.
In this case, by Proposition \ref{prop_Omega1B}, it follows that 
any solution of \eqref{modeleq}
with $(s_1(0),s_2(0))=\Omega_1$ and $x(0)>0$
converges to the periodic orbit.

If $\bar{s}_1>\lambda_1$ and $\bar{s}_2>\lambda_2$,
we define $\Omega_\lambda$ to be the region in the $s_1$-$s_2$
plane   that lies  between the
lines $s_2 = \bar{s}_2+R_{21}(s_1-\lambda_1)$ and
$s_2=\lambda_2+R_{21}(s_1-\bar{s}_1)$ and above or to the left of
$\Gamma^-$,
i.e.,
\begin{equation}
\Omega_\lambda =\left\{
    (s_1,s_2): \begin{aligned}
      &s_1>\bar{s}_1 \;\text{or}\; s_2>\bar{s}_2,\;\text{and}\\
      &
      V(\lambda_1,\bar{s}_2)\le V(s_1,s_2)\le V(\bar{s}_1,\lambda_2)
    \end{aligned}
  \right\}.
\end{equation}
For every $(s_1,s_2)\in\Omega_\lambda$, 
we have $\min\{f_1(s_1),f_2(s_2)\} > D$, and so growth of $x$ is always positive in this region.

\begin{corollary} Assume $(s_1^\IN,s_2^\IN)\in \Omega_1$ and $\mu(r)>0$.
If $\bar{s}_1>\lambda_1$
and $\widetilde{s}_2\ge\lambda_2$, then any solution
$(s_1(t),s_2(t),x(t))$ with $(s_1(0),s_2(0))\in\Omega_\lambda$
and $x(0)>0$ converges to the unique periodic orbit of \eqref{modeleq}.
\label{cor_lambda}
\end{corollary}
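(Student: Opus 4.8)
The plan is to reduce the claim to the single inequality $X(s_1^0,s_2^0)\le 0$ for every $(s_1^0,s_2^0)\in\Omega_\lambda$, as anticipated in the remark preceding the statement. Once this is shown, then $x(0)>0\ge X(s_1^0,s_2^0)$ forces $x(0)>X(s_1^0,s_2^0)$, and since $\Omega_\lambda\subseteq\Omega_1$ — which holds because $\lambda_1,\lambda_2>0$ give $V(0,\bar{s}_2)<V(\lambda_1,\bar{s}_2)$ and $V(\bar{s}_1,\lambda_2)<V(\bar{s}_1,0)$, so that the $V$-band defining $\Omega_\lambda$ sits strictly inside that of $\Omega_1$ — Proposition~\ref{prop_Omega1B}(ii) yields convergence to the unique periodic orbit.

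First I would record a sign criterion for the one-cycle increment $I$. Along any trajectory of \eqref{odes} from a point $p\in\Omega_1$ down to its foot $\pi^-(p)\in\Gamma^-$, both $s_1$ and $s_2$ decrease, so $\min\{f_1(s_1),f_2(s_2)\}$ is non-increasing and attains its least value at $\pi^-(p)$. Hence, if $\min\{f_1,f_2\}\ge D$ at $\pi^-(p)$, the integrand defining $I$ is nonnegative along the entire path and $I(p)\ge 0$. Using $f_i(\lambda_i)=D$ and the monotonicity of $f_i$, the endpoint condition is $\pi^-_1(p)\ge\lambda_1$ and $\pi^-_2(p)\ge\lambda_2$; translating this through the two cases of $\pi^-$ and the level-set structure of $V$ (one coordinate of $\pi^-(p)$ is pinned to $\bar{s}_i>\lambda_i$, the other is governed by $V$), it is equivalent to $V(\lambda_1,\bar{s}_2)\le V(p)\le V(\bar{s}_1,\lambda_2)$. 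Thus $I(p)\ge 0$ whenever $V(p)$ lies in the band defining $\Omega_\lambda$.

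The heart of the argument is to show that each iterate appearing in \eqref{eq_X} keeps its $V$-value inside this band. By Lemma~\ref{lem_vdot} and the identity $V(g(\,\cdot\,))=(1-r)V(\,\cdot\,)$, the successive images satisfy $V_j:=V\big((g\circ\pi^-)^{j}(s_1^0,s_2^0)\big)=(1-r)^jV_0$, with $V_0\in[V(\lambda_1,\bar{s}_2),V(\bar{s}_1,\lambda_2)]$, contracting monotonically toward $0$. If $V_0\ge 0$, then $0\le V_j\le V_0\le V(\bar{s}_1,\lambda_2)$ and $V_j>V(\lambda_1,\bar{s}_2)$, so every $V_j$ is in the band automatically. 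The delicate case is $V_0<0$ with $V(\bar{s}_1,\lambda_2)<0$, for then the band lies entirely in $\{V<0\}$ and is \emph{not} invariant under $g\circ\pi^-$: the negative iterates drift upward and could overshoot the upper edge $V(\bar{s}_1,\lambda_2)$. This is exactly where the hypothesis $\widetilde{s}_2\ge\lambda_2$ enters: as $V$ is decreasing in its second argument, it is equivalent to $V_-=V(\bar{s}_1,\widetilde{s}_2)\le V(\bar{s}_1,\lambda_2)$. Recalling from \eqref{eq_Omega1A_Vbar} that $\Omega_{1A}$ is the $V$-band $(V_-,V_+)$ with $V_-<0<V_+$, and that $N_0$ is the first index whose iterate enters $\Omega_{1A}$, I split on membership: if $V_-<V_0<0$ then $(s_1^0,s_2^0)\in\Omega_{1A}$, so $N_0=1$ and the only relevant value $V_0$ already lies in the band; if instead $V_0\le V_-$, then for every $j<N_0$ the iterate has not yet entered $(V_-,V_+)$, and being negative it must satisfy $V_j\le V_-\le V(\bar{s}_1,\lambda_2)$, while $V_j\ge V_0\ge V(\lambda_1,\bar{s}_2)$.

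Combining the cases, every term $I\big((g\circ\pi^-)^{k-1}(s_1^0,s_2^0)\big)$ with $1\le k\le N_0$ (i.e.\ iterate powers $0,\dots,N_0-1$) is nonnegative; hence each partial sum in \eqref{eq_X} is nonnegative, so $X(s_1^0,s_2^0)\le 0$, completing the reduction. I expect the main obstacle to be precisely the bookkeeping in the last case — confirming that the iterates stay on the correct side of $V_-$ for all indices strictly below $N_0$, and that the index window in \eqref{eq_X} matches the steps before the orbit reaches $\Omega_{1A}$ — since this is where the exact strength of the assumption $\widetilde{s}_2\ge\lambda_2$, rather than the weaker $\bar{s}_2\ge\lambda_2$, becomes indispensable.
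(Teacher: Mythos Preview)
Your proposal is correct and follows essentially the same route as the paper. Both arguments establish $X(s_1^0,s_2^0)\le 0$ by showing that every iterate $(g\circ\pi^-)^{j}(s_1^0,s_2^0)$ for $0\le j<N_0$ has nonnegative one-cycle increment $I$, then invoke Proposition~\ref{prop_Omega1B}(ii); the paper phrases the key step as ``$\Omega_\lambda\cup\Omega_{1A}$ is connected and $(g\circ\pi^-)(\Omega_\lambda)\subseteq\Omega_\lambda\cup\Omega_{1A}$'', whereas you track the $V$-values explicitly and split on the sign of $V(\bar s_1,\lambda_2)$, but the content---and in particular the role of $\widetilde s_2\ge\lambda_2$ as the inequality $V_-\le V(\bar s_1,\lambda_2)$ that prevents iterates from overshooting the $\Omega_\lambda$-band before reaching $\Omega_{1A}$---is identical.
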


\begin{proof}
First note that, since $\widetilde{s}_2\ge \lambda_2$, the line through $(\bar{s}_1^+,\bar{s}_2^+)$ is in $\Omega_\lambda$, and $\Omega_\lambda\cup \Omega_{1A}$ is connected. For any $(s_1^0,s_2^0)\in\Omega_\lambda$, we have $I(s_1^0,s_2^0)>0$ and $(g\circ \pi^-)(s_1^0,s_2^0)\in\Omega_\lambda\cup\Omega_{1A}$.
By \eqref{eq_X}, $X(s_1^0,s_2^0)<0
< x(0)$, and by Theorem \ref{thm_X}, $(s_1(t),s_2(t),x(t))$ converges to the periodic orbit.
\end{proof}

\begin{figure}[htbp]
{\includegraphics[trim = 1cm 0cm 1cm 0cm, clip, width=.49\textwidth]{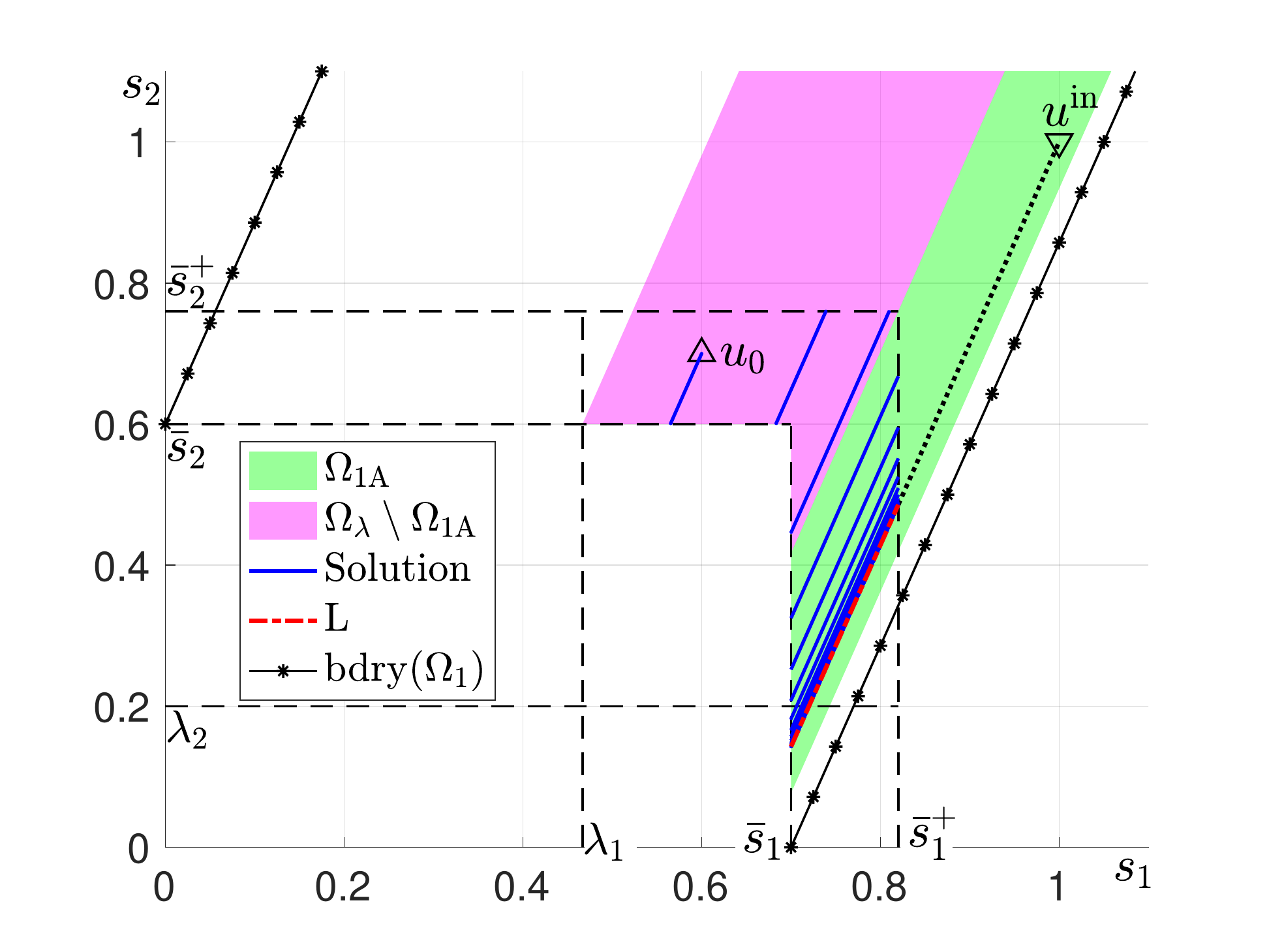}}
{\includegraphics[trim = 1cm 0cm 1cm 0cm, clip, width=.49\textwidth]{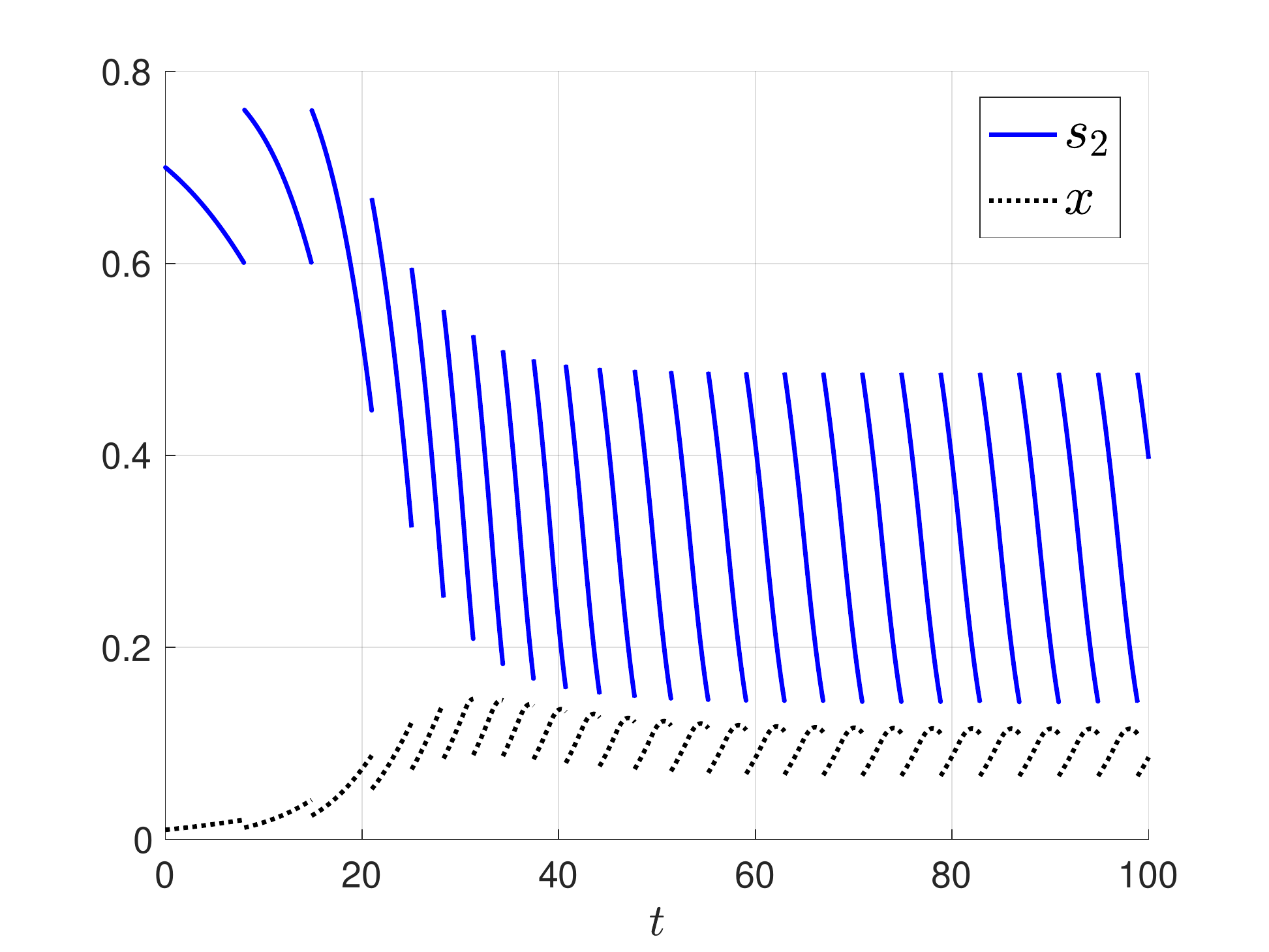}}
\caption{%
 If $u_0\in
\Omega_{\lambda} \setminus \Omega_{1A}$ and
$\Omega_{\lambda} \cup \Omega_{1A}$ is connected,
when $X(s_1^0,s_2^0)\le 0$,
the fermentation is still always successful for all $x(0)>0$.
The parameters are
the values given in Example \ref{ex_lambda}.
The initial condition is $(s_1(0),s_2(0),x(0))=(0.6,0.7,0.01)$.
}
\label{fig_lambda}
\end{figure}

\begin{example}
Consider \eqref{modeleq} with the Monod functional responses
$f_i(s_i)=\frac{m_is_i}{a_i+s_i}$, $i=1,2$,
and parameters 
$(m_1,m_2,a_1,a_2)=(2,2,1.4,0.6)$,
$(Y_1,Y_2,D)=(2,0.7,0.5)$,
and
$(\bar{s}_1,\bar{s}_2,s_1^\IN,s_2^\IN,r)=(0.7,0.6,1,1,0.4)$.
Then, \[
  (\bar{s}_1^+,\bar{s}_2^+)
  = (0.82,0.76),\;
  \widetilde{s}_2\approx 0.42,\;
  \widehat{s}_2\approx 0.14,\;
  \mu(r)\approx 0.04,\;
  V_-\approx -0.19.
\]
The equation $f_i(s_i)=D$, $i=1,2$, yields $\lambda_i=\frac{a_iD}{D-m_i}$,
which gives $\lambda_1\approx 0.4677$ and $\lambda_2=0.2$.
Since $\lambda_1<\bar{s}_1$ and $\lambda_2<\widetilde{s}_2$,
$\Omega_{\lambda}\cup \Omega_{1A}$
is a connected set,
and the hypotheses in Corollary \ref{cor_lambda} are satisfied.

We take the initial value $(s_1^0,s_2^0)= (0.6,0.7)$.
Then, $V(s_1^0,s_2^0)=-0.59$.
A direct calculation gives $V(\lambda_1,\bar{s}_2)\approx -0.79$,
so that $V(\lambda_1,\bar{s}_2)< V(s_1^0,s_2^0)<0$.
This implies that $(s_1^0,s_2^0)\in \Omega_\lambda$.
By Corollary \ref{cor_lambda},
the fermentation succeeds for every initial value $x(0)>0$.
An illustration is shown in Figure~\ref{fig_lambda}.
\label{ex_lambda}
\end{example}

\begin{remark} If $\lambda_2>\widetilde{s}_2$ then the set $\Omega_{1A}
\cup\Omega_\lambda$ is not connected.
Then for some $(s_1^0,s_2^0)\in \Omega_\lambda$, $(g\circ
\pi^-)(s_1^0,s_2^0)$ is in the gap between $\Omega_\lambda$ and
$\Omega_{1A}$. We are unable to rule out the possibility that
the net growth in this gap is negative, and so it is
conceiveable that $I((g\circ \pi^-)(s_1^0,s_2^0)) <0$. In
particular, we can choose $(s_1^0,s_2^0)\in \Omega_\lambda$
such that $I(s_1^0,s_2^0) < - I((g\circ \pi^-)(s_1^0,s_2^0))$.
Therefore, 
\begin{equation}
X(s_1^0,s_2^0) \geq -I(s_1^0,s_2^0) - (1-r)^{-1}
I((g\circ \pi^-)(s_1^0,s_2^0))>0.
\end{equation}
  Therefore, for some positive initial concentrations of biomass, the reactor will fail, even though the initial conditions are in $\Omega_\lambda$.
\end{remark}

\section{Maximizing the Output}
\label{sec_max}
In this section we regard $r$ as a variable in the interval $(r_*,1)$,
where $r_*$ is the number given in Theorem \ref{thm_per_exists}.

For each $r\in (r_*,1)$,
there is a periodic orbit.
In each period, there is exactly one impulse.
As shown in the proof of Theorem \ref{thm_per_exists},
the left and right limits at an impulse
are, respectively, \[
  (\bar{s}_1,\widehat{s}_2,x_-(r))
  \quad\text{and}\quad
  (\bar{s}_1^+(r),\widehat{s}_2^+(r),x_+(r)),
\] where \begin{equation}\label{limit_x}
  x_-(r)
  =\frac{1}{r}\mu(r)
  \quad\text{and}\quad
  x_+(r)
  =\frac{1-r}{r}\mu(r).
\end{equation}

The trajectory of the periodic orbit
can be parametrized by \[
  s_2
  = \widehat{s}_2+R_{21}(s_1-\bar{s}_1)
\]
and \begin{align}
  x=X(s_1; r)
  &= x_-(r)
  - Y_1\int_{\bar{s}_1}^{s_1}
  1-\frac{D}{\min\big\{f_1(v),f_2(\widehat{s}_2+R_{21}(v-\bar{s}_1))\big\}}
  \;dv
  \label{eq_xs1m}
  \\
  &= x_+(r)
  + Y_1\int_{s_1}^{\bar{s}_1^+}
  1-\frac{D}{\min\big\{f_1(v),f_2(\widehat{s}_2+R_{21}(v-\bar{s}_1))\big\}}
  \;dv
  \label{eq_xs1p}
\end{align}
with $s_1\in (\bar{s}_1,\bar{s}_1^+)$
and $\bar{s}_1^+= \bar{s}_1+ r(s_1^{\mathrm{in}}-\bar{s}_1)$.

Denote the minimal period of the periodic orbit by $T(r)$.
Then \begin{equation}\label{eq_T}
  T(r)
  = Y_1\int_{\bar{s}_1}^{\bar{s}_1^+(r)}
    \frac{1}{\min\big\{f_1(v),f_2(\widehat{s}_2+R_{21}(v-\bar{s}_1))\big\}\; X(v;r)}
  dv.
\end{equation}
In the long run, the average amount of  output
divided by the total volume
is \begin{equation*}
  Q(r)= \frac{r}{T(r)}.
\end{equation*} 
Maximizing $Q(r)$ for $r\in (r_*,1)$ is equivalent to maximizing the   output.

\begin{lemma}
\label{lem_limitT}
The minimal period $T(r)$, $r\in (r_*,1)$ of the periodic orbit of \eqref{modeleq}
satisfies $\lim_{r\to 1^-}T(r)=\infty$.
Also $\lim_{r\to r_*}T(r)=\infty$ if $r_*>0$.
\end{lemma}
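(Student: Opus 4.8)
The plan is to bound $T(r)$ from below by two elementary a priori estimates and then combine them through a single monotone function. The key observation is that in \emph{both} limits the biomass just after an impulse tends to $0$. Writing $x_+(r)=\frac{1-r}{r}\mu(r)$ as in \eqref{limit_x}, we have $x_+(r)\to 0$ as $r\to 1^-$ (since $1-r\to 0$ and $\mu$, being an integral of continuous data, is bounded on $[0,1]$), and $x_+(r)\to 0$ as $r\to r_*^+$ (since $\mu(r_*)=0$ by continuity of $\mu$ and the definition of $r_*$). Simultaneously, the $s_1$-length of one cycle, $\bar s_1^+(r)-\bar s_1=r(s_1^\IN-\bar s_1)$, stays bounded below by a fixed $\delta_0>0$ near each limit; here the hypothesis $r_*>0$ is exactly what keeps $\delta_0$ positive for the $r\to r_*$ statement. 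Throughout I set $M_1=\min\{f_1(s_1^\IN),f_2(s_2^\IN)\}$; since every periodic orbit satisfies $\bar s_1\le s_1(t)\le \bar s_1^+(r)\le s_1^\IN$ and $\widehat s_2\le s_2(t)\le \widehat s_2^+(r)\le s_2^\IN$, monotonicity of $f_1,f_2$ gives the \emph{uniform in $r$} bound $\min\{f_1(s_1(t)),f_2(s_2(t))\}\le M_1$ along every such orbit.

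Let $x_{\max}(r)$ be the maximum of $x$ over the periodic orbit; it is finite and satisfies $x_{\max}(r)\ge x_-(r)>0$. The first bound comes from exponential growth: along the orbit $\frac{dx}{dt}=(-D+\min\{f_1(s_1),f_2(s_2)\})x\le M_1 x$, so applying Gr\"onwall's inequality over one period starting from $x_+(r)$ yields
\[
  x_{\max}(r)\le x_+(r)\,e^{M_1 T(r)}.
\]
The second bound comes from slow transit of $s_1$: between impulses $s_1$ decreases monotonically from $\bar s_1^+(r)$ to $\bar s_1$ with $|s_1'(t)|=\tfrac{1}{Y_1}\min\{f_1(s_1),f_2(s_2)\}\,x\le \tfrac{M_1}{Y_1}x_{\max}(r)$, so integrating over one period gives $\bar s_1^+(r)-\bar s_1\le \tfrac{M_1}{Y_1}x_{\max}(r)\,T(r)$, i.e.
\[
  x_{\max}(r)\ge \frac{Y_1\,(\bar s_1^+(r)-\bar s_1)}{M_1\,T(r)}.
\]

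Chaining the two inequalities eliminates $x_{\max}(r)$ and gives
\[
  \frac{Y_1(\bar s_1^+(r)-\bar s_1)}{M_1\,T(r)}\le x_+(r)\,e^{M_1 T(r)},
  \qquad\text{hence}\qquad
  T(r)\,e^{M_1 T(r)}\ge \frac{Y_1(\bar s_1^+(r)-\bar s_1)}{M_1\,x_+(r)}.
\]
Because $T\mapsto T e^{M_1 T}$ is strictly increasing and unbounded on $[0,\infty)$, while the right-hand side tends to $+\infty$ in each limit (numerator $\ge Y_1\delta_0/M_1>0$, denominator $M_1 x_+(r)\to 0$), we conclude $T(r)\to\infty$ as $r\to 1^-$, and likewise as $r\to r_*^+$ when $r_*>0$.

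The main obstacle is recognizing that no single estimate covers both limits. The exponential bound alone gives only $T(r)\ge \frac{1}{M_1}\ln\frac{1}{1-r}$, which diverges as $r\to 1^-$ but converges to a finite value as $r\to r_*^+$; it is the slow-transit bound, together with the fact that $x_+(r)\to 0$, that rescues the second limit. Packaging the two inequalities through the monotone map $T\mapsto Te^{M_1 T}$ is what lets both limits be handled simultaneously and avoids a case analysis of the behavior of $x_{\max}(r)$; the only routine ingredients are the uniform bound $M_1$ and the two limit facts $x_+(r)\to 0$ and $\bar s_1^+(r)-\bar s_1\ge\delta_0>0$.
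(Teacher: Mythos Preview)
Your proof is correct and takes a genuinely different route from the paper's. The paper argues directly from the integral representation \eqref{eq_T} of $T(r)$: for $r\to 1^-$ it invokes \eqref{eq_xs1p} together with $X(\bar s_1^+;r)=x_+(r)\to 0$, and for $r\to r_*^+$ it invokes \eqref{eq_xs1m} together with $X(\bar s_1;r)=x_-(r)\to 0$; in each case the divergence of $T(r)$ is read off from the blow-up of the integrand near one endpoint of the $v$-interval. Your argument bypasses the integral formula entirely, combining a Gr\"onwall bound $x_{\max}(r)\le x_+(r)e^{M_1 T(r)}$ with a transit-time bound $x_{\max}(r)\ge Y_1(\bar s_1^+(r)-\bar s_1)/(M_1 T(r))$, and then inverting the strictly increasing map $T\mapsto T e^{M_1 T}$. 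The unification through the single quantity $x_+(r)$ (which vanishes in \emph{both} limits) and the monotone-function trick are nice: they handle the two limits simultaneously and avoid any local analysis of where $X(\cdot;r)$ is small along the orbit. The paper's approach, by contrast, is more tied to the explicit parametrisation \eqref{eq_xs1m}--\eqref{eq_xs1p} and uses different endpoint data ($x_+$ versus $x_-$) for the two limits; it is shorter once that machinery is in place, but your estimates are more self-contained and would survive in settings where a clean integral formula for $T$ is unavailable.
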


\begin{proof}
As $r\to 1$,
we have $x_+(r)\to 0$.
By \eqref{eq_T} and \eqref{eq_xs1p},
$T(r)\to \infty$.

If $r_*>0$, then by \eqref{limit_x}, $x_-(r)\to 0$ as $r\to r_*$.
Along the periodic orbit,
$x\ge x_-(r)>0$
and \[
  \min\{f_1(s_1),f_2(s_2)\}\ge \min\{f_1(\bar{s}_1),f_2(\bar{s}_2)\}>0.
\]
By \eqref{eq_T} we conclude that $T(r)\to \infty$ as $r\to r_*$.
\end{proof}

\begin{proposition}
\label{prop_Q0}
The function $Q(r)=r/T(r)$, $r\in (r_*,1)$,
satisfies $\lim_{r\to 1}Q(r)=0$.
If $r_*>0$,
then $\lim_{r\to r_*}Q(r)=0$.
If $r_*=0$,  $\bar{s}_1\ge\lambda_1$, and $\widehat{s}_2\ge\lambda_2$,
then $\lim_{r\to r_*}Q(r)=\min\{f_1(\bar{s}_1),f_2(\widehat{s}_2)\} -D\ge 0$.
\end{proposition}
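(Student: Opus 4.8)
The plan is to dispatch the first two limits directly from Lemma~\ref{lem_limitT} and to obtain the third, more delicate, limit by squeezing $Q(r)$ between two explicit quantities with a common limit. For $\lim_{r\to 1}Q(r)=0$, Lemma~\ref{lem_limitT} gives $T(r)\to\infty$ as $r\to 1^-$ while the numerator $r$ stays bounded, so $Q(r)=r/T(r)\to 0$. If $r_*>0$, the same lemma gives $T(r)\to\infty$ as $r\to r_*$ while $r\to r_*>0$ is finite, so again $Q(r)\to 0$.

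For the third statement, write $\delta=s_1^\IN-\bar{s}_1>0$ and $m(v)=\min\{f_1(v),f_2(\widehat{s}_2+R_{21}(v-\bar{s}_1))\}$, so the periodic orbit is parametrized by $v=s_1\in[\bar{s}_1,\bar{s}_1^+(r)]$ with $\bar{s}_1^+(r)-\bar{s}_1=r\delta$. Two monotonicity facts drive the argument. Because $\bar{s}_1\ge\lambda_1$ and $\widehat{s}_2\ge\lambda_2$ give $m(\bar{s}_1)=\min\{f_1(\bar{s}_1),f_2(\widehat{s}_2)\}\ge D$, and because $f_1,f_2$ are strictly increasing, $m(v)$ is increasing on $[\bar{s}_1,\bar{s}_1^+(r)]$ with $m(v)\ge m(\bar{s}_1)\ge D$; hence the integrand in \eqref{eq_xs1m} is nonnegative, so $X(v;r)$ is nonincreasing and $x_+(r)\le X(v;r)\le x_-(r)$ on the interval. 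Inserting $m(\bar{s}_1)\le m(v)\le m(\bar{s}_1^+(r))$ together with these bounds into \eqref{eq_T}, I would obtain
\begin{equation*}
  \frac{Y_1 r\delta}{m(\bar{s}_1^+(r))\,x_-(r)}
  \le T(r)
  \le \frac{Y_1 r\delta}{m(\bar{s}_1)\,x_+(r)},
\end{equation*}
and therefore, after taking reciprocals and multiplying by $r$,
\begin{equation*}
  \frac{m(\bar{s}_1)\,x_+(r)}{Y_1\delta}
  \le Q(r)
  \le \frac{m(\bar{s}_1^+(r))\,x_-(r)}{Y_1\delta}.
\end{equation*}

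It then remains to pass to the limit in both bounds. By continuity of $f_1,f_2$ we have $m(\bar{s}_1^+(r))\to m(\bar{s}_1)=\min\{f_1(\bar{s}_1),f_2(\widehat{s}_2)\}$. Substituting $v=\bar{s}_1+r\delta w$ in \eqref{def_mu} gives $x_-(r)=\mu(r)/r=Y_1\delta\int_0^1\bigl(1-D/m(\bar{s}_1+r\delta w)\bigr)\,dw$, and the uniform convergence of the integrand yields $x_-(r)\to Y_1\delta\bigl(1-D/m(\bar{s}_1)\bigr)$; since $x_+(r)=(1-r)x_-(r)$ by \eqref{limit_x}, also $x_+(r)\to Y_1\delta\bigl(1-D/m(\bar{s}_1)\bigr)$. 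Feeding these into the two bounds, both converge to $m(\bar{s}_1)\bigl(1-D/m(\bar{s}_1)\bigr)=\min\{f_1(\bar{s}_1),f_2(\widehat{s}_2)\}-D$, and the squeeze theorem gives the claim; nonnegativity of the limit is immediate from $m(\bar{s}_1)\ge D$.

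The point to watch is the borderline subcase $\bar{s}_1=\lambda_1$ and $\widehat{s}_2=\lambda_2$, where $m(\bar{s}_1)=D$ and the limiting biomass $Y_1\delta(1-D/m(\bar{s}_1))$ vanishes. A naive computation of $\lim Q(r)$ by separately evaluating $\lim T(r)$ would here require a second-order expansion of $X(v;r)$ near $v=\bar{s}_1$, since $X(v;r)\to 0$ and the integrand in \eqref{eq_T} degenerates. The squeeze above is designed precisely to sidestep this: it never divides by the limiting biomass, so both bounds still tend to $m(\bar{s}_1)-D=0$ with no separate asymptotic analysis. Thus the only real care needed is establishing the monotonicity of $m(v)$ and of $X(v;r)$ on the shrinking interval, which is what makes the uniform bounds on the product $m(v)X(v;r)$ valid.
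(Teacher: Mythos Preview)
Your proof is correct. The first two limits match the paper's argument exactly, but for the third limit you take a genuinely different route. The paper applies L'H\^opital's rule twice: first to compute $\lim_{r\to 0}X(\bar{s}_1;r)=\lim_{r\to 0}\mu(r)/r$ via $\mu'(0)$, then it differentiates $T(r)$ explicitly using Leibniz's rule to obtain $T'(0)=\bigl(\min\{f_1(\bar{s}_1),f_2(\widehat{s}_2)\}-D\bigr)^{-1}$, and finally applies L'H\^opital again to $Q(r)=r/T(r)$. Your squeeze argument instead exploits the monotonicity of $m(v)$ and of $X(\cdot\,;r)$ on the shrinking interval to trap $Q(r)$ between $m(\bar{s}_1)x_+(r)/(Y_1\delta)$ and $m(\bar{s}_1^+(r))x_-(r)/(Y_1\delta)$, then computes $\lim x_\pm(r)$ by a change of variables. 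This is more elementary (no differentiation of $T$) and, as you note, handles the borderline case $m(\bar{s}_1)=D$ seamlessly, whereas the paper's computation of $T'(0)$ tacitly uses $X(\bar{s}_1;0)\ne 0$ and would need a separate justification there. The paper's approach does yield the explicit value $T'(0)$ as a by-product, which could be useful for further analysis of $Q$ near $r=0$, but for the bare limit statement your method is cleaner.
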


\begin{proof}
By Lemma \ref{lem_limitT}, $\lim_{r\to 1}Q(r)= 1/ (\lim_{r\to 1}T(r))=0$.

If $r_*>0$, then,
by Lemma \ref{lem_limitT},
$\lim_{r\to r_*}Q(r)= r_*/ (\lim_{r\to r_*}T(r))=0$.

Next we assume $r_*=0$.
By \eqref{eq_T} \begin{equation}
  Q(r)
  =r\left/
  \left(Y_1\int_{\bar{s}_1}^{\bar{s}_1^+(r)}
    \frac{1}{\min\big\{f_1(v),f_2(\widehat{s}_2+R_{21}(v-\bar{s}_1))\big\}\; X(v;r)}
  \;dv\right)
  \right.,
  \label{Qint}
\end{equation}
where $X$ is defined by \eqref{eq_xs1m}.
Since $X(\bar{s}_1;r)=x_-(r)=\frac{\mu(r)}{r}$,
using L'H\^{o}pital's rule
and the definition of $\mu(r)$ in \eqref{def_mu},
\[
  \lim_{r\to 0}X(\bar{s}_1;r)
  = Y_1\left.\frac{d\bar{s}_1^+(r)}{dr}\right|_{r=0}
  \left(1-\frac{D}{\min\{f_1(\bar{s}_1),f_2(\widehat{s}_2)\}}\right).
\]
Since $\bar{s}_1^+= \bar{s}_1+ r(s_1^{\mathrm{in}}-\bar{s}_1)$,
we have \begin{equation}
  \left.\frac{d\bar{s}_1^+(r)}{dr}\right|_{r=0}
  = s_1^\IN-\bar{s}_1,
  \label{lims1r0}
\end{equation}
and it follows that
\begin{equation}
  \lim_{r\to 0}X(\bar{s}_1;r)= Y_1(s_1^\IN-\bar{s}_1)\left(1-\frac{D}{\min\{f_1(\bar{s}_1),f_2(\widehat{s}_2)\}}\right).
  \label{limXr0}
\end{equation}
Furthermore, we have 
\begin{align}
T'(r)
=&  \frac{Y_1(s_1^\IN-\bar{s}_1)}{\min\big\{f_1(\bar{s}_1^+),f_2(\widehat{s}_2+rR_{21}(s_1^\IN-\bar{s}_1))\big\}\; X(\bar{s}_1^+;r)}\notag\\
&-\frac{Y_1}{r^2}\int_{\bar{s}_1}^{\bar{s}_1^+(r)} \frac{\mu'(r)r-\mu(r)}{\min\big\{f_1(v),f_2(\widehat{s}_2+R_{21}(v-\bar{s}_1))\big\}\; X(v;r)^2}dv.\label{eq:Tprime}
\end{align}
Since $\bar{s}_1^+(0)=\bar{s}$
and $X(\bar{s}_1;0)\ne 0$,
by \eqref{limXr0} and \eqref{eq:Tprime}
we obtain \[
  T'(0)
  =\frac{Y_1(s_1^\IN-\bar{s}_1)}{\min\big\{f_1(\bar{s}_1),f_2(\widehat{s}_2)\big\}\; X(\bar{s}_1;0)}
  = \frac{1}{\min\{f_1(\bar{s}_1),f_2(\widehat{s}_2)\} -D}.
\]
From the expression $Q(r)=r/T(r)$,
using L'H\^{o}pital's rule, we conclude that
$\lim_{r\to 0}Q(r)=1/T'(0)=\min\{f_1(\bar{s}_1),f_2(\widehat{s}_2)\} -D$.
\end{proof}

 Assume $r_*>0$. Since $\lim_{r\to r_*}Q(r)=\lim_{r\to 1}Q(r)=0$,
by Proposition \ref{prop_Q0},
$Q(r)$ attains its maximum at some value of $r$ in $(r_*,1)$.
Unfortunately,
the analytical expression of the derivative $\frac{d}{dr}Q(r)$
is too complicated for finding a critical point of $Q(r)$.
We have only obtained the maximum value using a numerical simulation.
An illustration of the maximal value of $Q(r)$ is given in Figure~\ref{fig_RTQ}.

\begin{figure}[htbp]
\centering
\begin{tabular}{cc}
\includegraphics[trim = .5cm 0cm 1cm 0cm, clip, width=.48\textwidth]{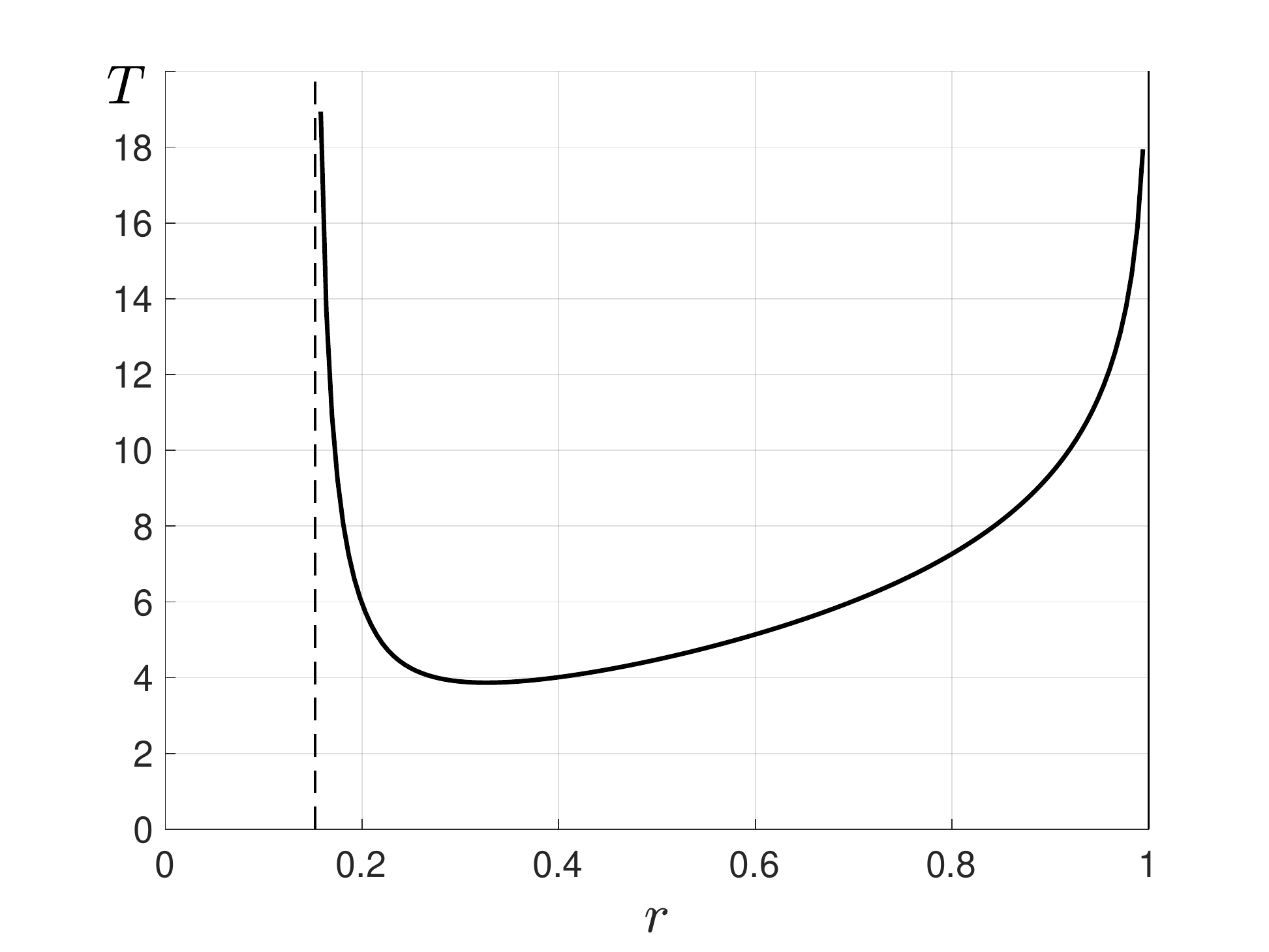}
&
\includegraphics[trim = .5cm 0cm 1cm 0cm, clip,width=.48\textwidth]{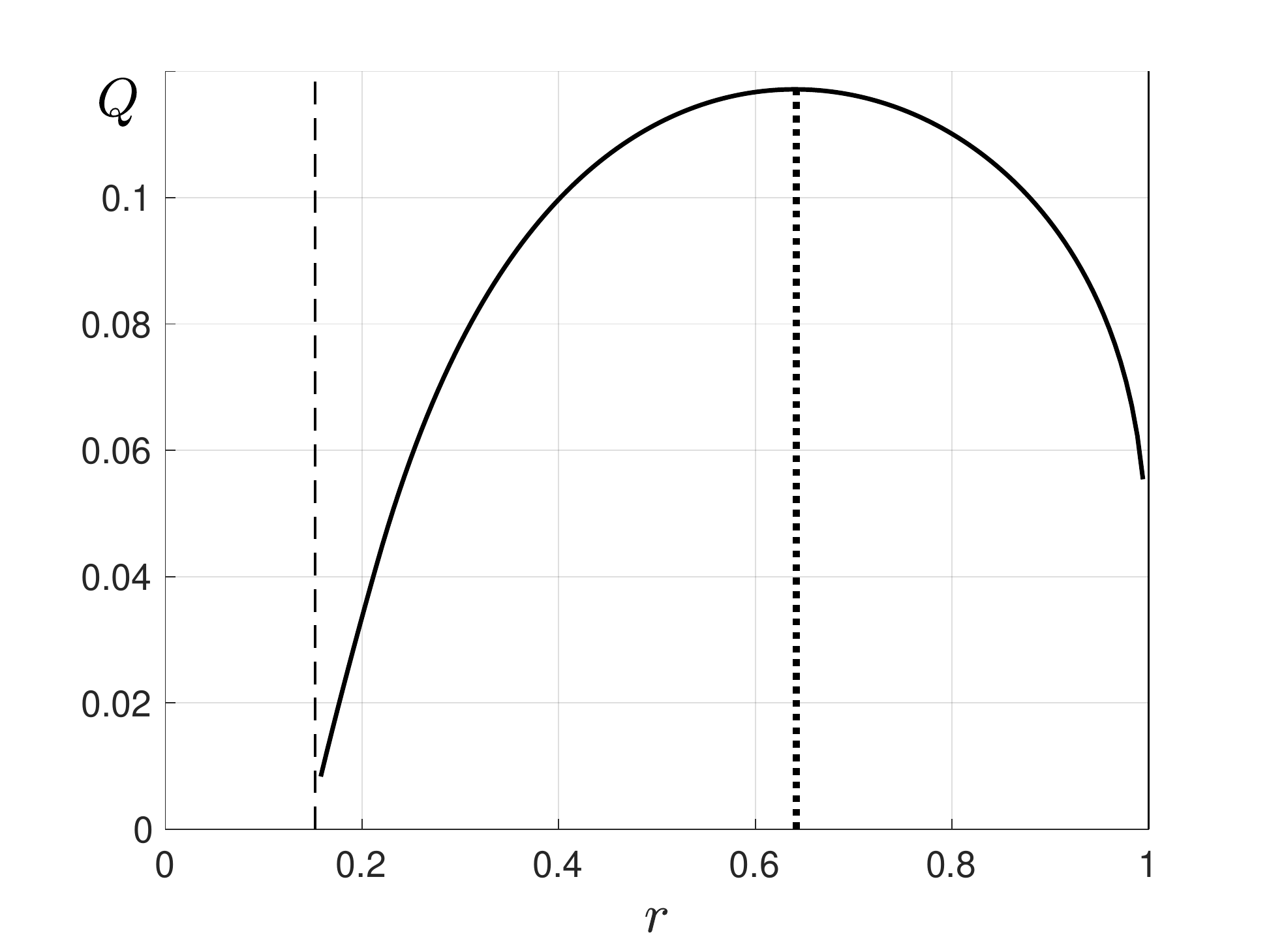}
\\
(A)&(B)
\end{tabular}
\caption{%
(A)
$T(r)$ is the minimal period of the periodic orbit, $r\in (r_*,1)$. 
The dashed line is $r=r_*$.
As $r\to r_*$ or $r\to 1$, $T(r)\to \infty$.
(B) The maximum of $Q(r)$ is attained at $r\approx 0.6416$,
indicated by the dotted line.
In the simulation, 
the response functions are
$f_i(s_i)=\frac{m_is_i}{a_i+s_i}$, $i=1,2$,
with parameters given in Example \ref{ex_lambda}.
}
\label{fig_RTQ}
\end{figure}

\section{Discussion}
\label{sec_discussion}
We have modeled the self-cycling fermentation process
assuming that there are two essential resources $s_1$ and $s_2$ that
are growth limiting for a population of microorganisms, $x$, using a system of impulsive differential equations with state-dependent impulses. Assuming that the process is  used for an
application such as water purification, where the resources $s_1$ and
$s_2$ are the pollutants, we assume that the threshold
for emptying and refilling a fraction of the contents of the
fermentor, resulting in the release of  treated water,  occurs when the concentrations
of both pollutants reach an acceptable concentration set by some
governmental agency.    We called these thresholds, 
$s_1\leq\bar{s}_1$ and $s_2\le \bar{s}_2$. We consider the process successful if once
initiated, it proceeds indefinitely without a need for any subsequent
interventions by the operator.

By solving the associated ODE system for $s_2$ in terms of $s_1$, we
show that solutions,  when projected onto the $s_1$-$s_2$ plane, are lines
with  slope given by  the ratio of the growth yield constants.
In order to derive  necessary conditions for successful operation of the
fermentor, we first divide the $s_1$-$s_2$ plane
into two regions: $\Omega_0$ and $\Omega_1$. The  model predicts that solutions of the
associated system of ODEs with initial conditions in  $\Omega_0$
approach the axes without ever reaching the
thresholds for emptying and refilling and the  reactor  fails,
independent of the initial concentration of microorganisms.
Solutions of the associated system of ODEs with initial
conditions in $\Omega_1$ have the potential to reach the  threshold
for emptying and refilling, but in this case, successful operation
can also depend on the initial concentration of the
population of microorganisms.

In most cases, at startup the
input concentration of the pollutant
would be the concentration of the pollutant in the
environment, which we are assuming is constant, i.e.,
$(s_1(0),s_2(0))=(s_1^\IN,s_2^\IN)$.  If, for any solution starting at
these input
concentrations of the resources, $(s_1^\IN,s_2^\IN)$, and positive
concentration of biomass, $x(0)>0$,   the
 threshold for emptying and refilling, $s_1\leq\bar{s}_1$ and $s_2\le \bar{s}_2$, is
reached with net positive growth of the biomass, the model analysis predicts that  we can choose an
emptying/refilling fraction, $r$,  so that the system cycles indefinitely.  In
this case 
the solution   approaches a  periodic solution with one impulse per period.

If the system has
a periodic solution, the $(s_1,s_2)$ components of the periodic orbit lie
along the line with slope given by  the ratio of the growth yield
constants joining $(s_1^\IN,s_2^\IN)$  and the point in the $s_1$-$s_2$
plane where both thresholds are reached. 
The net change in the biomass on the periodic orbit, that we denote
$\mu(r)$, must then also be positive. 

For other initial conditions in  $\Omega_1$, in order
for the process to operate successfully,   it is not enough that
$\mu(r)>0$.  There
is also a minimum concentration of biomass, $X$, that depends on the initial
concentration of the resources, that is  required for the
reactor to be successful.  If the initial concentration of biomass is
larger than $X$,  then our analysis predicts that 
the reactor will cycle indefinitely and solutions will converge to the
periodic orbit. If the initial concentration of biomass is less than or
equal to $X$, then the reactor will cycle a finite number of times and then fail. If there is no periodic orbit, then the reactor will either
cycle a finite number of times and then fail, or will cycle
indefinitely, but   the time between cycles will approach infinity.

Besides depending on the initial concentration of the resources at start
up, the minimum concentration of biomass at startup  required for
successful operation depends on the
emptying/refilling fraction in an interesting way. The closer $r$ is  to
one, the  smaller the number of impulses that are required for solutions to get
to the periodic orbit. However, the time spent in a region of negative
growth could be larger, and so $X$ would be larger. The closer $r$ is  to
zero, results in  less time  spent in regions with negative growth, but
more impulses are then required to get close to the periodic orbit. Each
impulse removes biomass from the reactor, and so $X$ would also increase.
This implies that there is an optimal value of $r$ for which the reactor
has the best potential for success.
The values of the growth yield constants, $Y_1$ and
$Y_2$, also play a role in the size of $X$. If their ratio  is held
constant, but each value is scaled by a constant $c>0$, then $X$ is also
scaled by the same constant $c$.  Knowing this is important when
selecting the population of microorganisms to use in the process.

If the choice of potential 
microorganisms for use in the process is restricted, then it might be
easier to treat more highly polluted water than less polluted water,
provided the microorganisms are not inhibited at high concentrations of the pollutant. We have shown that for
successful operation, it is necessary that an $r$ exists such that $\mu(r)>0$.  
One way to increase $\mu(r)$ without changing anything else is to increase $s_1^\IN$ and $s_2^\IN$ in
such a way that it still lies on the same line as before.
It is also important to choose a population of microorganisms so that
$(s_1^\IN$,$s_2^\IN)$  lies in $\Omega_{1}$.  It might only be possible
to do this by increasing the concentration of one of the pollutants.
However, another possibility might be to pre-process the input with a
different population of microorganisms that 
moves  $(s_1^\IN$,$s_2^\IN)$ into an 
acceptable position so that a second population can then treat the  water
effectively.

We also make what might appear to be  other surprising
observations.
Although the break-even concentrations play a role,  it is not necessary for both break-even
concentrations to be below their respective thresholds for emptying
and refilling  for the process to be successful  (see  Figure~\ref{fig_cycling}). Also, the process can still fail
when
both break-even concentrations are below
their respective thresholds 
(see Figure~\ref{fig_washout}). 

For growth on a single, non-inhibitory, limiting resource in the self-cycling fermentation process, it has been shown that when the system has a periodic orbit, every solution either converges to the periodic orbit, or converges to an equilibrium without a single impulse \cite{Smith2001}. If the resource is inhibitory at high concentrations, it has been shown that solutions may also converge to an equilibrium after a single impulse, but if there are at least two impulses then the solution is destined to converge to the periodic orbit\cite{Fan2007}. In contrast, if there are two limiting essential resources, we have shown that there may be many impulses before the system converges to an equilibrium, even when the system has a periodic orbit.  The example in Figure \ref{fig_failed} demonstrates failure after two impulses. 

An important issue when setting up the self cycling fermentation
process is   the choice of the emptying and refilling fraction, $r$.
In the application we considered we were interested in optimizing the
total amount of output.  In the example,
shown in section \ref{sec_max}, we demonstrated that the optimal value of the emptying/refilling fraction is $r\approx 0.64$. This result is consistent with what was shown in the single resource cases \cite{Fan2007,Smith2001}.
Another reason for implementing a self-cycling fermentation process
instead of a continuous input process is to maximize the concentration of some
microorganism in the output over some time period. For example, one
recent `proof of concept' study \cite{Wang2017}   investigated using the self-cycling fermentation process to improve the production of cellulosic ethanol production. In their investigation, and many other applications of self-cycling fermentation the emptying/refilling fraction $r$ is set to one half. While this 
is convenient for experiments and measurements, our results indicate that
this is might not be  the  optimal choice of $r$. 

\titleformat{name=\section}[runin]{}{\thetitle.}{0.5em}{\bfseries}
\def\refname{\centerline{\small REFERENCES}\\[-.5em]}


\begin{thebibliography}{10}

\bibitem{Bauinov1989}
D.~D. Ba\u{i}nov and P.~S. Simeonov.
\newblock {\em Systems with impulse effect}.
\newblock Ellis Horwood Series: Mathematics and its Applications. Ellis Horwood
  Ltd., Chichester; Halsted Press [John Wiley \& Sons, Inc.], New York, 1989.
\newblock Stability, theory and applications.

\bibitem{Bauinov1993}
D.~D. Ba\u{i}nov and P.~S. Simeonov.
\newblock {\em Impulsive differential equations: periodic solutions and
  applications}, volume~66 of {\em Pitman Monographs and Surveys in Pure and
  Applied Mathematics}.
\newblock Longman Scientific \& Technical, Harlow; copublished in the United
  States with John Wiley \& Sons, Inc., New York, 1993.

\bibitem{Bauinov1995}
D.~D. Ba\u{i}nov and P.~S. Simeonov.
\newblock {\em Impulsive differential equations}, volume~28 of {\em Series on
  Advances in Mathematics for Applied Sciences}.
\newblock World Scientific Publishing Co., Inc., River Edge, NJ, 1995.
\newblock Asymptotic properties of the solutions, Translated from the Bulgarian
  manuscript by V. Covachev [V. Khr. Kovachev].

\bibitem{Butler1987}
G.~Butler and G.~S.~K. Wolkowicz.
\newblock Exploitative competition in a chemostat for two complementary, and
  possibly inhibitory, resources.
\newblock {\em Mathematical biosciences}, 83:1--48, 1987.

\bibitem{Cinar2003}
A.~Cinar, S.~J. Parulekar, C.~\"{U}ndey, and G.~Birol.
\newblock {\em Batch fermentors, modeling,monitoring,and control}.
\newblock Marcel Dekker, New York, 2003.

\bibitem{Cordova2014}
F.~{C{\'o}rdovea-Lepe}, R.~D. {Valle}, and G.~{Robledo}.
\newblock Stability analysis of a self-cycling fermentation model with
  state-dependent impulses.
\newblock {\em Mathematical methods in the applied sciences}, 37:1460--1475,
  2014.

\bibitem{Fan2007}
G.~Fan and G.~S.~K. Wolkowicz.
\newblock Analysis of a model of nutrient driven self-cycling fermentation
  allowing unimodal response functions.
\newblock {\em Discrete and continuous dynamical systems}, 8(4):801--831, 2007.

\bibitem{Grover1997}
J.~P. Grover.
\newblock {\em Resource competition. Population and community biology series
  19}.
\newblock Chapman and Hall, New York, 1997.

\bibitem{Halanay1968}
A.~Halanay and D.~Wexler.
\newblock {\em Teoria calitativa a sistemelor du impulsuri}.
\newblock Ed. Acad. RSR, Bucharedt, 1968.

\bibitem{Hughes1996}
S.~Hughes and D.~Cooper.
\newblock Biodegradation of phenol using the self-cycling fermentation process.
\newblock {\em Biotechnology and Bioengineering}, 51:112--119, 1996.

\bibitem{Leontumpson1975}
J.~A. Le\'{o}n and D.~B. Tumpson.
\newblock Competition between two species for two complementary or
  substitutable resources.
\newblock {\em J. Theor. Biol.}, 50:185--2012, 1975.

\bibitem{Li2000}
B.~Li, G.~S.~K. Wolkowicz, and Y.~Kuang.
\newblock Global asymptotic behavior of a chemostat model with two perfectly
  complementary resources with distributed delay.
\newblock {\em {SIAM} J. Appl. math.}, 60(6):2058--2086, 2000.

\bibitem{MATLAB:2017b}
MATLAB.
\newblock {\em version 9.3.0 (R2017b)}.
\newblock The MathWorks Inc., Natick, Massachusetts, 2017.

\bibitem{Perko1996}
L.~Perko.
\newblock {\em Differential equations and dynamical systems}.
\newblock Springer-Verlag, 1996.

\bibitem{Samoilenko1995}
A.~Samoilenko and N.~Perestyuk.
\newblock {\em Impulsive differential equations}.
\newblock World Scientific, Singapore, 1995.

\bibitem{Sarkis1994}
B.~E. Sarkas and D.~G. Cooper.
\newblock Biodegradation of aromatic compounds in a self-cycling fermenter.
\newblock {\em The Canadian Journal of Chemical Engineering}, 72(5):874--880,
  1994.

\bibitem{Sauvageau2010}
D.~Sauvageau, Z.~Storms, and D.~G. Cooper.
\newblock Sychronized populations of \textit{Escherichia coli} using simplified
  self-cycling fermentation.
\newblock {\em Journal of Biotechnology}, 149:67--73, 2010.

\bibitem{Smith1995}
H.~L. Smith and P.~Waltman.
\newblock {\em The theory of the chemostat}, volume~13 of {\em Cambridge
  Studies in Mathematical Biology}.
\newblock Cambridge University Press, Cambridge, 1995.
\newblock Dynamics of microbial competition.

\bibitem{Smith2001}
R.~J. Smith and G.~S.~K. Wolkowicz.
\newblock Analysis of a model of the nutrient driven self-cycling fermentation
  process.
\newblock {\em Dynamics of continuous, discrete and impulsive systems},
  11:239--265, 2004.

\bibitem{Storms2012}
Z.~J. Storms, T.~Brown, D.~Sauvageau, and D.~G. Cooper.
\newblock Self-cycling operation increases productivity of recombinent protein
  in \textit{Escherichia Coli}.
\newblock {\em Biotechnology and Bioengineering}, 109(9):2262--2270, 2012.

\bibitem{Tilman1982}
D.~Tilman.
\newblock {\em Resource competition and community structure}.
\newblock Princeton University Press, New Jersey, 1982.

\bibitem{Liebig1840}
J.~Von~Liebig.
\newblock {\em Die organische Chemie in ihrer Anwendung auf Agrikultur und
  Physiologie}.
\newblock Friedrich Vieweg, Braunschweig, 1840.

\bibitem{Wang2017}
J.~Wang, M.~Chae, D.~Sauvageau, and D.~C. Bressler.
\newblock Improving ethanol productivity through self-cycling fermentation of
  yeast: a proof of concept.
\newblock {\em Biotechnology for biofuels}, 10(1):193, 2017.

\bibitem{Wu2004}
J.~Wu, H.~Nie, and G.~S.~K. Wolkowicz.
\newblock A mathematical model of competition for two essential resources in
  the unstirred chemostat.
\newblock {\em {SIAM} J. Appl. Math.}, 65:209--229, 2004.

\end{thebibliography}
\end{document}